\newcommand{\F}{\mathscr{F}}
\newcommand{\Lr}{\mathscr{L}}
\newcommand{\dvol}{\frac{\omega^n}{n!}}
\newcommand{\dvollin}{\frac{\omegalin^n}{n!}}
\newcommand{\dsubvol}{\frac{\omega^{n-1}}{(n-1)!}}
\newcommand{\dsubsubvol}{\frac{\omega^{n-2}}{(n-2)!}}
\newcommand{\tr}{\mathrm{tr}}
\newcommand{\omegalin}{\omega_{\textrm{lin}}}
\newcommand{\Walg}{\mathbb{W}}
\newcommand{\Sp}{\mathrm{Sp}}
\newcommand{\R}{\mathbb{R}}
\newcommand{\C}{\mathbb{C}}
\newcommand{\N}{\mathbb{N}}
\newcommand{\Z}{\mathbb{Z}}
\def\cl{\mathop{\mathrm{Cl}}\nolimits}
\def\ham{\mathop{\mathrm{Ham}}\nolimits}
\newcommand{\A}{\mathcal{A}}
\newcommand{\V}{\mathcal{V}}
\newcommand{\T}{\mathcal{T}}
\newcommand{\E}{\mathcal{E}}
\newcommand{\J}{\mathcal{J}}
\newcommand{\D}{\mathcal{D}}
\newcommand{\RR}{\mathcal{R}}
\newcommand{\Rr}{\mathrm{R}}
\newcommand{\extwedge}{\stackrel{\circ}{\wedge}}
\newcommand{\WW}{\mathbb{W}}
\newcommand{\ddt}{\frac{d}{dt}}
\newcommand{\dds}{\frac{d}{ds}}
\newcommand{\ddto}{\left.\ddt\right|_{0}}
\newcommand{\ddso}{\left.\dds\right|_{0}}
\newcommand{\invnu}{\frac{1}{\nu}}
\newcommand{\W}{\mathcal{W}}
\newtheorem{theorem}{Theorem}[section]
\newtheorem{lemma}[theorem]{Lemma}
\newtheorem{cor}[theorem]{Corollary}
\newtheorem{prop}[theorem]{Proposition}
\newtheorem{defi}[theorem]{Definition}
\theoremstyle{definition} 
\newtheorem{ex}[theorem]{Exemple}
\theoremstyle{remark}
\newtheorem{rem}[theorem]{Remark}
\begin{document}

\renewcommand{\refname}{Bibliography}

\title{The formal moment map geometry of the space of symplectic connections}

\author{Laurent La Fuente-Gravy\\
	\scriptsize{laurent.lafuente@uni.lu}\\
	\footnotesize{Universit\'e du Luxembourg}\\[-7pt]
	\footnotesize{Luxembourg} \\[-7pt]}

\maketitle

\begin{abstract}
We deform the moment map picture on the space of symplectic connections on a symplectic manifold. To do that, we study a vector bundle of Fedosov star product algebra on the space of symplectic connections $\mathcal{E}$. We describe a natural formal connection on this bundle adapted to the star product algebras on the fibers. We study its curvature and show the $*$-product trace of the curvature is a formal symplectic form on $\E$. The action of Hamiltonian diffeomorphisms on $\E$ preserves the formal symplectic structure and we show the $*$-product trace can be interpreted as a formal moment map for this action. Finally, we apply this picture to study automorphisms of star products and Hamiltonian diffeomorphisms.
\end{abstract}

\noindent {\footnotesize {\bf Keywords:} Symplectic connections, Moment map, Deformation quantization, Hamiltonian diffeomorphisms, Hamiltonian automorphisms.\\
{\bf Mathematics Subject Classification (2010):}  53D55, 53D20, 32Q15, 53C21}

\tableofcontents


\section{Introduction}

We deform the moment map picture on the space of symplectic connections \cite{cagutt} on a closed symplectic manifold. We obtain a formal symplectic form on the space of symplectic connections and derive a formal moment map for the action of the group of Hamiltonian diffeormorphisms. We then relate the underlying action homomorphism from Shelukhin \cite{Shel} to an invariant from \cite{FutLLF2}.

Our formal moment map picture comes from deformation quantization \cite{BFFLS}. It is a formal counterpart of the work of Foth--Uribe \cite{FU}, who showed that the trace of Berezin-Toeplitz operators can be interpreted as a moment map on the space of almost complex structure on an integral symplectic manifold.

We use the notion of formal connections introduced by Andersen--Masulli--Sch\"atz \cite{AMS} We consider a vector bundle of formal star product algebras on the space of symplectic connections. Topologically, it is simply a product with fiber the space of formal smooth functions on $M$. Algebraically, on the fiber over the symplectic connection $\nabla$, the star product is $*_{\nabla}$, the Fedosov star product obtained with symplectic connection $\nabla$ and a trivial formal series of closed $2$-form. Working as in Masulli's Thesis \cite{MasulliThesis}, we define a canonical connection by interpreting the classical construction of isomorphisms of Fedosov star product algebra's \cite{fed2} as a parallel transport. We recover the explicit formula for the curvature and go one step further showing the curvature $2$-form acts by inner derivations, i.e the star commutator with a formal function.

In the approach of \cite{FU}, a certain determinant line bundle is considered and its curvature produces a symplectic form on the space of almost complex structures. With our infinite dimensional fibers, we bypass the determinant line bundle step and we consider directly the star product trace of the curvature element. It gives a formal symplectic form on the space of symplectic connections, i.e. a series of closed $2$-forms whose leading term is non-degenerate. We show that this $2$-form is a deformation of the symplectic structure on the space of symplectic connections and that the action of the group of Hamiltonian diffeomorphism preserves this formal symplectic form. With this picture, analogous to \cite{FU}, we prove the star product trace is a formal moment map. Hence zeroes of the formal moment map is a symplectic connection giving rise to a closed Fedosov star product.

Finally, we propose the following application to the study of Hamiltonian diffeomorphisms group. We show that the parallel lift of a Hamiltonian diffeomorphisms path gives Hamiltonian automorphisms of the star product \cite{LLF0}. Next, we apply the construction of Shelukhin \cite{Shel}, to derive a Weinstein action homomorphism which is an invariant on the $\pi_1$ of the group of Hamiltonian diffeomorphisms. On K\"ahler manifold, when restricting this action homomorphism to Hamiltonian biholomorphisms we recover the invariant from \cite{FutLLF2} obstructing the closedness of the Fedosov star product $*_{\nabla}$.

The motivation of this paper is twofold. First, it justifies the study of closed Fedosov star product, see \cite{LLF,LLF2}, Futaki--Ono \cite{FO} and Futaki--L. \cite{FutLLF1,FutLLF2}, as a zero moment map problem, a picture that was up to now only valid at first orders in the formal parameter. Second, there is an increasing interest in the study of the quantization of Hamiltonian diffeomorphisms from the point of view of symplectic topology, see for example Ioos \cite{Ioos, Ioos2}, Charles \cite{Ch}, Charles--Polterovich \cite{ChPolte} and Charles--Le Floch \cite{ChLeFloch} (in the past 3 years!). This paper proposes a formal approach to study the Hamiltonian diffeomorphisms group via quantization.


\section{Formal connections}

We explain, in our context, the work of Andersen--Masulli--Sch\"atz \cite{AMS} and Masulli \cite{MasulliThesis} who constructed formal connections adapted to families of Fedosov star products. The only difference is that we are dealing with a bundle over the whole infinite dimensional space of symplectic connections and that we are only considering Fedosov star products build with a trivial choice of series of closed $2$-form.

\subsection{Definitions}

Throughout the paper, we consider a closed symplectic manifold $(M,\omega)$. Let us briefly recall the symplectic structure of the space of symplectic connections. A connection $\nabla$ on the tangent bundle $TM$ is called \emph{symplectic} if $\nabla \omega=0$ and $\nabla$ has no torsion. There always exists a symplectic connection $\nabla$ on a sympletic manifold and the space of symplectic connection $\E(M,\omega)$ is the affine infinite dimensional space
$$\E(M,\omega)=\{\nabla+A \,|\, A\in \Lambda^1(M)\otimes \textrm{End}(TM,\omega) \textrm{ such that } \underline{A}:=\omega(\cdot,A(\cdot)\cdot)\in \Gamma(S^3T^*M) \}$$
where $\Gamma(S^3T^*M)$ is the space of completely symmetric $3$-tensors on $M$. The symplectic form on $\E(M,\omega)$ is defined by
$$\Omega^{\E}_{\nabla}(A,B):=\int_M \Lambda^{i_1j_1}\Lambda^{i_2j_2}\Lambda^{i_3j_3}\underline{A}_{i_1i_2i_3}\underline{B}_{j_1j_2j_3} \dvol,$$
where we identify tangent vectors $A, B$ to elements $\underline{A}, \underline{B}$ in $\Gamma(S^3T^*M)$, $\Lambda^{kl}$ is the inverse of the matrix of $\omega_{kl}$ in coordinates and we use summation convention on repeated indices. The group $\ham(M,\omega)$ of Hamiltonian diffeomorphisms acts symplectically on $\E(M,\omega)$ by
\begin{equation*} \label{eq:action}
(\varphi.\nabla)_X Y := \varphi_*(\nabla_{\varphi^{-1}_* X}\varphi^{-1}_* Y),
\end{equation*}
for $\varphi\in \ham(M,\omega)$, for all $X,Y \in TM$ and $\nabla \in \E(M,\omega)$. This action admits a moment map \cite{cagutt}, which will be recovered later from our formal moment map picture.

Let us now consider the attached Poisson algebra of smooth functions $(C^{\infty}(M),\{\cdot,\cdot\},\cdot)$, where $\{F,G\}:=-\omega(X_F,X_G)$ for $X_F$ defined by $\imath(X_F)\omega=dF$ with $F,G \in C^{\infty}(M)$. A \emph{star product} is an associative product on the space $C^{\infty}(M)[[\nu]]$ of formal power series of smooth functions:
$$F*G:=\sum_{r=0}^{+\infty} \nu^r C_r(F,G),$$
given by bidifferential $\nu$-linear operators $C_r$, such that $C_0(F,G)=FG$, $C_1^-(F,G):=C_1(F,G)-C_1(G,F)=\{F,G\}$ and $F*1=F=1*F$.

In this paper, we focus on Fedosov star products, which exist on any symplectic ma\-nifold. Namely, given a symplectic connection $\nabla$ and a series $\Omega$ of closed $2$-forms on $M$, Fedosov \cite{fed2} gives a geometric construction of a star product $*_{\nabla,\Omega}$. Fedosov's procedure will be recalled in the next Subsection. Except for some remarks, we will be concerned with the Fedosov star product $*_{\nabla}:=*_{\nabla,0}$ using a trivial series $\Omega=0$.

As in \cite{AMS}, to a family of star product $\{*_{\sigma}\}_{\sigma\in \mathcal{T}}$ on $(M,\omega)$ parametrized by a smooth manifold $\mathcal{T}$, one associate the vector bundle 
\begin{equation*}
\V(\T,\{*_{\cdot}\}):=\T\times C^{\infty}(M)[[\nu]] \stackrel{p}{\longrightarrow} \T,
\end{equation*}
with $p$ the projection on the first factor. While topologically, the bundle is trivial, the algebraic structure on the fiber depends on the basepoint as we endow $p^{-1}(\sigma)$ with the star product $*_{\sigma}$.

A {\it formal connection} $\D$ is a connection in the bundle $\V(\T,\{*_{\cdot}\})$ that can be written as 
\begin{equation}\label{eq:Ddef}
\D_X F:= X(F) + \beta(X)\, F,
\end{equation}
where $F$ is a section of $\V(\T,\{*_{\cdot}\})$, $X$ is a tangent vector to $\T$ and $\beta(X)$ is a series
$$\beta(X):=\sum_{k=1}^{\infty} \nu^k \beta_k(X),$$
for $\beta_k(\cdot)$ being smooth $1$-forms on $\T$ with values in the space of differential operators over $M$ (note that $\beta(X)=0\, (\textrm{mod }\nu)$). The formal connection is called {\it compatible} if for any smooth sections $F,G$ of $\V(\T,\{*_{\cdot}\})$, we have
$$\D(F*_{\sigma}G)=\D(F)*_{\sigma} G + F*_{\sigma} \D(G).$$

\begin{defi}
We call the \emph{tautological family of star products}, the family of Fedosov star products $\{*_{\nabla}\}_{\nabla\in \E(M,\omega)}$ parametrized by the symplectic manifold $\E(M,\omega)$ of all symplectic connections. To this family we associate the \emph{tautological vector bundle} $\mathcal{V}:= \mathcal{V}(\E(M,\omega),\{*_{\cdot}\})$ with fiber $p^{-1}(\nabla)$ endowed with the Fedosov star product $*_{\nabla}$. 
\end{defi}

\noindent In the Subsection \ref{subsect:canonical}, we will recall the construction from \cite{MasulliThesis} of a canonical formal connection on $\V$.

Remark that the base manifold and the fibers of $\V$ are both of infinite dimension. The space of symplectic connection which is modeled on $\Gamma(S^3T^{*}M)$ is a Fr\'echet manifold as well as $C^{\infty}(M)$ as we are working on a closed manifold. Moreover, if one consider the space $\R^{\N}$ of sequences of real numbers, one defines semi-norms $||a_{\cdot}||_j:=|a_j|$ for $j\in\N$ and $a_{\cdot}\in \R^{\N}$, the topology induced by the semi-norms makes $\R^{\N}$ Fr\'echet and smooth functions from $M$ to $\R^{\N}$ are identified with formal power series $C^{\infty}(M)[[\nu]]$. In the Fedosov settings, we will also meet formal power series of symmetric tensors on $M$ which are products of Fr\'echet spaces. So that we can perform differential geometry in our context, for details about differential geometry in infinite dimensional settings we refer to the survey \cite{KHN}.

\subsection{Fedosov star products}

We briefly recall Fedosov's construction of star product and isomorphisms of Fedosov star products \cite{fed2}.

On the symplectic manifold $(M,\omega)$, let $x\in M$ and $\{e_1,\ldots,e_{2n}\}$ be a basis of $T_xM$ and $\omega_{ij}:=\omega(e_i,e_j)$. We consider the dual basis $\{y^1,\ldots, y^{2n}\}$ of $T^*_xM$. The formal Weyl algebra $\WW_x$ at the point $x$ is the set of formal power series of symmetric forms on $T_xM$, that is elements
$$a(y,\nu):=\sum_{2k+r=0}^{\infty} \nu^r a_{r,i_1\ldots i_k}y^{i_1}\ldots y^{i_k}$$
for $a_{r,i_1\ldots i_k}$ symmetric in $i_1\ldots i_k$ and $2k+r$ is called the total degree, endowed with the $\circ$-product defined by
\begin{eqnarray*}
a(y,\nu)\circ b(y,\nu) & := &\left.\left( \exp\left(\frac{\nu}{2}\Lambda^{ij} \partial_{y^i} \partial_{z^j}\right)a(y,\nu)b(z,\nu)\right)\right|_{y=z} \nonumber \\
\end{eqnarray*}

We form the formal Weyl algebra bundle $\W:=\bigsqcup_{x\in M}\WW_x$, as well as the bundle of differential forms with values in the Weyl algebra bundle by $\W\otimes \Lambda M$ whose sections are locally of the form:
\begin{equation} \label{eq:sectionofW}
\sum_{2k+l\geq 0,\, k,l\geq 0, p\geq 0} \nu^k a_{k,i_1\ldots i_l,j_1\ldots j_p}(x)y^{i_1}\ldots y^{i_l}dx^{j_1}\wedge \ldots \wedge dx^{j_p}.
\end{equation}
The $a_{k,i_1\ldots i_l,j_1\ldots j_p}(x)$ are, in the indices $i_1,\ldots,i_l,j_1,\ldots,j_p$, the components of a tensor on $M$, symmetric in the 
$i$'s and antisymmetric in the $j$'s. We have a filtration of the space of sections $\Gamma \W\otimes \Lambda^*(M)$ with respect to the total degree
$$\Gamma \W\otimes \Lambda^*(M)\supset \Gamma \W^1\otimes \Lambda^*(M)\supset \Gamma \W^2\otimes \Lambda^*(M)\supset \ldots.$$
The space $\Gamma \W\otimes \Lambda^*(M)$ is made into an algebra by extending the $\circ$-product, for $a, b\in \Gamma \W$ and $\alpha, \beta\in \Omega^*(M)$, we define
$(a\otimes \alpha) \circ (b\otimes \beta) := a\circ b \otimes \alpha\wedge \beta$, where $\circ$ is performed fiberwise. The graded commutator $[s,s']:=s\circ s'- (-1)^{q_1q_2}s'\circ s$ where $s$ is a $q_1$-form and $s'$ a $q_2$-form (anti-symmetric degree), gives to $\W$-valued forms  the structure of a graded Lie algebra.

A symplectic connection $\nabla$ on $(M,\omega)$ induces a derivation $\partial$ of degree $+1$ on $\W$-valued forms by :
\begin{equation*}
\partial a := da + \frac{1}{\nu}[\overline{\Gamma},a] 
\end{equation*}
for $a\in \Gamma \W\otimes \Lambda(M)$, where $\overline{\Gamma}:=\frac{1}{2}\omega_{lk}\Gamma^k_{ij}y^ly^jdx^i$, for $\Gamma^k_{ij}$ the Christoffel symbols of $\nabla$ (note that $\omega_{lk}\Gamma^k_{ij}$ is symmetric $l,j$ because $\nabla$ preseves the symplectic form). 

The curvature of $\partial$ is
\begin{equation*}
\partial\circ \partial\, a := \frac{1}{\nu}[\overline{R},a],
\end{equation*}
where $\overline{R}:= \frac{1}{4} \omega_{ir}R^r_{jkl}y^iy^jdx^k\wedge dx^l$, for $R^r_{jkl}:=\left(R(\partial_k,\partial_l)\partial_j\right)^r$ the components of the curvature tensor of $\nabla$.

To make this connection flat, we consider connections on $\Gamma \W$ of the form
\begin{equation} \label{eq:defD}
D a:=\partial a - \delta a + \frac{1}{\nu}[r,a],
\end{equation}
where $r$ is a $\W$-valued $1$-form and $\delta$ is defined by
\begin{equation*} \label{eq:deltadef}
\delta(a) := dx_k\wedge \partial_{y_k} a=-\frac{1}{\nu}[\omega_{ij}y^i dx^j,a].
\end{equation*}
The curvature of $D$ is
\begin{equation*}
D^2 a = \frac{1}{\nu}\left[\overline{R} + \partial r - \delta r + \frac{1}{2\nu}[r,r]-\omega,a\right].
\end{equation*}
So, $D$ is flat if one can find a $\W$-valued $1$-form $r$ so that
\begin{equation}\label{eq:req}
\overline{R} + \partial r - \delta r + \frac{1}{\nu}r\circ r  = \Omega,
\end{equation}
for $\Omega \in \nu\Omega^2(M)[[\nu]]$ being any closed formal $2$-form.

Define 
$$\delta^{-1} a_{pq}:= \frac{1}{p+q}y^ki(\partial_{x^k})a_{pq} \textrm{ if } p+q>0 \textrm{ and } \delta^{-1}a_{00}=0,$$
where $a_{pq}$ is a $q$-forms with $p$ $y$'s and $p+q>0$. Fedosov showed \cite{fed2}, for any given closed central $2$-form $\Omega$,
there exists a unique solution $r \in \Gamma \W \otimes \Omega^1 M$ with $\W$-degree at least $3$ of:
\begin{equation*}
\overline{R} + \partial r - \delta r + \frac{1}{\nu}r\circ r = \Omega,
\end{equation*}
and satisfying $\delta^{-1}r=0$. 

We denote by $D$ the flat connection defined by \eqref{eq:defD} with $r$ the unique solution of \eqref{eq:req}. Define $\Gamma \W_{D} := \{a\in \Gamma \W | D a=0\}$ the space of flat sections, it is an algebra for the $\circ$-product since $D$ is a derivation. Define also the symbol map $\sigma :a\in \Gamma \W_{D} \mapsto \left.a\right|_{y=0}\in C^{\infty}(M)[[\nu]]$. Fedosov showed \cite{fed2} that $\sigma$ is a bijection with inverse $Q$ defined by 
\begin{equation*} \label{eq:defQ}
Q:=\sum_{k\geq 0} \left(\delta^{-1}(\partial + \frac{1}{\nu}[r,\cdot])\right)^k.
\end{equation*}
Via $\sigma$ and $Q$, the $\circ$-product induces a star product $*$ on $C^{\infty}(M)[[\nu]]$ that is called a Fedosov star product.

Later, we will need the following technical lemma stating that $D$-flat $1$-forms are also $D$-exact with canonical representative.
\begin{lemma}\label{lemme:Dinverse}
Suppose $b \in \Gamma(W)\otimes \wedge^1(M)$ satisfy $Db = 0$. Then the equation $Da = b$ admits a
unique solution $a \in \Gamma(W)$, such that $a|_{y=0} = 0$, it is given by
$$b=D^{-1}a:=-Q(\delta^{-1}a).$$
\end{lemma}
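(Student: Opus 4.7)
The plan is to verify directly that $a:=-Q(\delta^{-1}b)$ (the formula in the statement, modulo an apparent swap between $a$ and $b$) solves $Da=b$ with $a|_{y=0}=0$, and to derive uniqueness from Fedosov's bijection $\sigma:\Gamma\W_{D}\to C^{\infty}(M)[[\nu]]$. The two main ingredients are the Hodge-type identity $u=\delta\delta^{-1}u+\delta^{-1}\delta u+u_{00}$ (where $u_{00}$ denotes the $(p,q)=(0,0)$ component) and Fedosov's flatness $D^{2}=0$.

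Setting $c:=\delta^{-1}b$ and $L:=\partial+\tfrac{1}{\nu}[r,\cdot]$, so that $a=-\sum_{k\ge 0}(\delta^{-1}L)^{k}c$, convergence of $Q$ in the total-degree filtration is standard because $\delta^{-1}$ raises total degree by $1$, $\partial$ preserves it, and $\tfrac{1}{\nu}[r,\cdot]$ raises it by at least $1$ (since $r$ has total degree $\ge 3$). The fact that $\delta^{-1}$ always introduces a factor $y^{k}$ shows that every summand of $a$ has $y$-degree $\geq 1$, so $a|_{y=0}=0$.

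For the verification that $Da=b$, the relation $a=-Qc$ rewrites as $(1-\delta^{-1}L)a=-c$, or equivalently $\delta^{-1}(La-b)=a$. Since $a$ is a $0$-form with $a|_{y=0}=0$, the Hodge identity collapses to $\delta^{-1}\delta a=a$, and by subtraction
\[
\delta^{-1}(Da-b)=0,
\]
while Fedosov's flatness and the hypothesis give $D(Da-b)=D^{2}a-Db=0$. The heart of the proof is then the claim that a $1$-form $w\in\Gamma\W\otimes\wedge^{1}(M)$ killed by both $D$ and $\delta^{-1}$ must vanish; applied to $w:=Da-b$ this yields $Da=b$. I would prove this claim by induction on the total degree, writing $w=\sum_{j\ge 0}w_{j}$. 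The condition $\delta^{-1}w=0$ splits as $\delta^{-1}w_{j}=0$ for each $j$ (because the $\delta^{-1}w_{j}$ live in pairwise distinct total degrees), and reading $Dw=0$ at total degree $j-1$ forces $\delta w_{j}=0$ once $w_{0}=\dots=w_{j-1}=0$ (all $\tfrac{1}{\nu}[r,w_{j'}]$ contributions require $j'\le j-2$ and hence vanish). The Hodge identity applied to the $1$-form $w_{j}$ then gives $w_{j}=\delta\delta^{-1}w_{j}+\delta^{-1}\delta w_{j}=0$.

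Uniqueness is immediate: the difference of two solutions lies in $\Gamma\W_{D}$ with vanishing value at $y=0$, so Fedosov's bijection $\sigma$ forces it to be zero. The main technical subtlety is the bookkeeping of the $y$/$\nu$-bigrading needed to justify both the convergence of $Q$ and the inductive step, in particular checking that no contribution of $\tfrac{1}{\nu}[r,\cdot]$ sneaks in at an unintended total degree.
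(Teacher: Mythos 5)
Your proof is correct. The paper does not actually prove this lemma --- it is quoted as a known technical fact from Fedosov's construction --- so there is no in-text argument to compare against; your write-up is essentially Fedosov's original argument. You correctly identified the typo in the statement (the displayed formula should read $a=D^{-1}b:=-Q(\delta^{-1}b)$), and all the key steps check out: the convergence of $Q$ in the total-degree filtration, the reduction of $a=-Q(\delta^{-1}b)$ to $\delta^{-1}(Da-b)=0$ via the Hodge identity $u=\delta\delta^{-1}u+\delta^{-1}\delta u+u_{00}$, the degree-by-degree induction showing that a $1$-form annihilated by both $D$ and $\delta^{-1}$ vanishes (the bookkeeping that $\tfrac{1}{\nu}[r,\cdot]$ only feeds strictly lower-degree components into degree $j-1$ is exactly the point that needs care, and you handled it), and uniqueness from the bijectivity of the symbol map $\sigma$ on $\Gamma\W_{D}$.
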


In the sequel, to emphasize the dependence of $*$ (resp. $r$, $D$ and $Q$) in the choices $\nabla$ and $\Omega$, we will write $*_{\nabla,\Omega}$ (resp. $r^{\nabla,\Omega}$, $D^{\nabla,\Omega}$ and $Q^{\nabla,\Omega}$) and simply $*_{\nabla}$ (resp. $r^{\nabla}$,$D^{\nabla}$ and $Q^{\nabla}$) when $\Omega=0$.

We now describe a canonical way to lift smooth path of symplectic connections to isomorphisms of Fedosov star product algebra. We restrict our attention to star products of the form $*_{\nabla}$.

To do that we need to consider sections of the extended bundle $\W^+\supset \W$ which are locally of the form 
\begin{equation*} \label{eq:sectionofW+}
\sum_{2k+l\geq 0, l\geq 0} \nu^k a_{k,i_1\ldots i_l}(x)y^{i_1}\ldots y^{i_l}.
\end{equation*}
similar to \eqref{eq:sectionofW}, with $p=0$, but we allow $k$ to take negative values, the total degree $2k+l$ of any term must remain nonnegative and in each given nonnegative total degree there is a finite number of terms.

\begin{theorem}[Fedosov \cite{fed2}] \label{theor:smoothisom}
Consider the smooth path $t\in [0,1] \mapsto \nabla^t\in \E(M,\omega)$, then there exists isomorphisms $B_t:\Gamma\W_{D^{\nabla^0}} \rightarrow \Gamma \W_{D^{\nabla^t}}$. Moreover, $B_t$ can be chosen in the canonical form
$$B_t a:=v_t\circ a \circ v_t^{-1}$$
for $v_t \in \Gamma \W^+$ is the unique solution of the initial value problem:
\begin{equation} \label{eq:vt}
\left\{\begin{array}{rcl}
\frac{d}{dt}v_t & = & \frac{1}{\nu}h_t\circ v_t \\
v_0 &= & 1
\end{array}\right.
\end{equation}
with 
\begin{equation} \label{eq:ht}
h_t:=-(D^{\nabla^t})^{-1}\left(\ddt \overline{\Gamma}^{\nabla^t}+\ddt r^{\nabla^t}\right).
\end{equation}
\end{theorem}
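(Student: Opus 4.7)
The plan is to realize $B_t$ as conjugation by a formal ``parallel transport'' $v_t$ along the path $\nabla^t$. First I would establish that the right-hand side of \eqref{eq:ht} makes sense, which means verifying that $\dot A_t := \frac{d}{dt}\overline{\Gamma}^{\nabla^t} + \frac{d}{dt}r^{\nabla^t}$ is $D^{\nabla^t}$-closed, so that Lemma \ref{lemme:Dinverse} applies. Differentiating the Fedosov equation \eqref{eq:req} with $\Omega=0$ in $t$, and using $\partial^{\nabla^t} = d + \frac{1}{\nu}[\overline{\Gamma}^{\nabla^t},\cdot]$ together with the Maurer--Cartan-type identity $\overline{R}^{\nabla^t} = d\overline{\Gamma}^{\nabla^t} + \frac{1}{2\nu}[\overline{\Gamma}^{\nabla^t},\overline{\Gamma}^{\nabla^t}]$, I would obtain
\begin{equation*}
0 = D^{\nabla^t}\dot A_t + \delta\bigl(\tfrac{d}{dt}\overline{\Gamma}^{\nabla^t}\bigr).
\end{equation*}
The correction term vanishes: since $\frac{d}{dt}\nabla^t \in \Lambda^1(M)\otimes \End(TM,\omega)$ has a \emph{completely symmetric} associated $3$-tensor $\underline{\dot A^t}$, the quantity $\frac{d}{dt}\overline{\Gamma}^{\nabla^t} = \frac{1}{2}\underline{\dot A^t}_{lij}y^ly^j dx^i$ is killed by $\delta$, because $\delta$ produces wedge factors $dx^l\wedge dx^i$ and $dx^j\wedge dx^i$ contracted against a tensor symmetric in those indices. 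Hence $h_t = -(D^{\nabla^t})^{-1}\dot A_t = Q^{\nabla^t}\delta^{-1}\dot A_t$ is well-defined, satisfies $h_t|_{y=0}=0$, and has strictly positive Weyl-degree.

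Since $\frac{1}{\nu}h_t$ consequently has total degree $\geq 1$, the ODE \eqref{eq:vt} can be solved by Picard iteration filtered by total Weyl degree: at each step, $\frac{1}{\nu}h_t \circ v_t$ strictly raises degree, so the recursion defining $v_t$ converges in $\Gamma\W^+$ and yields a unique solution of the form $v_t = 1 + \sum_{k\geq 1} v_t^{(k)}$ with $v_t^{(k)}$ of total degree $\geq k$. Such a $v_t$ is automatically $\circ$-invertible, so $B_t := v_t \circ (\cdot) \circ v_t^{-1}$ defines a $\circ$-algebra automorphism of $\Gamma\W$ (the morphism property being tautological for inner conjugation).

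It remains to verify that $B_t$ maps $\Gamma\W_{D^{\nabla^0}}$ into $\Gamma\W_{D^{\nabla^t}}$. Fix $a \in \Gamma\W_{D^{\nabla^0}}$ and set $\phi(t) := D^{\nabla^t}(B_t a)$. From \eqref{eq:vt} one gets $\frac{d}{dt}(B_t a) = \frac{1}{\nu}[h_t, B_t a]$, and differentiating $D^{\nabla^t}$ in $t$ gives $\frac{d}{dt}(D^{\nabla^s}b)|_{s=t} = \frac{1}{\nu}[\dot A_t, b] = -\frac{1}{\nu}[D^{\nabla^t}h_t, b]$. Applying the total derivative and the graded Leibniz rule of $D^{\nabla^t}$ on the bracket $[h_t, B_t a]$ then yields
\begin{equation*}
\phi'(t) = -\tfrac{1}{\nu}[D^{\nabla^t}h_t, B_t a] + \tfrac{1}{\nu}[D^{\nabla^t}h_t, B_t a] + \tfrac{1}{\nu}[h_t, \phi(t)] = \tfrac{1}{\nu}[h_t, \phi(t)],
\end{equation*}
with $\phi(0) = D^{\nabla^0}a = 0$; uniqueness for this linear ODE (again solved order-by-order in total degree) forces $\phi \equiv 0$. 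The main obstacle is Step~1: the $D^{\nabla^t}$-closedness of $\dot A_t$ is not automatic and relies critically on the complete symmetry of $\underline{\dot A^t}$, a feature specific to variations within $\E(M,\omega)$ and not shared by arbitrary deformations of linear connections.
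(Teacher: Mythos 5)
Your proof is correct and follows the same overall strategy as the paper (realize $B_t$ as conjugation by the solution $v_t$ of the degree-filtered ODE, check $h_t$ is well defined via Lemma \ref{lemme:Dinverse}, then check flatness is transported), but the two key verifications are organized differently. For the well-definedness of $h_t$, the paper writes $D^{\nabla^0}=D^{\nabla^t}-\frac{1}{\nu}[\Delta\Gamma_t+\Delta r_t,\cdot]$ and differentiates the resulting curvature identity; since the $\delta$-parts of the two Fedosov connections coincide, no $\delta$-term ever appears. You instead differentiate the Fedosov equation \eqref{eq:req} directly, which produces the extra term $\delta\bigl(\ddt\overline{\Gamma}^{\nabla^t}\bigr)$ that you must (and correctly do) kill using the complete symmetry of the variation tensor $\underline{\dot A^t}$ — a genuine observation that is invisible in the paper's route, and which isolates exactly where the hypothesis $\dot\nabla^t\in T\E(M,\omega)$ enters. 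For the transport of flatness, the paper proves the algebraic identity $v_t^{-1}\circ D^{\nabla^t}v_t+\frac{1}{\nu}(\Delta\Gamma_t+\Delta r_t)=0$, which yields the full intertwining $D^{\nabla^t}\circ B_t=B_t\circ D^{\nabla^0}$ on all of $\Gamma\W$ and hence bijectivity between the two spaces of flat sections in one stroke; your ODE argument for $\phi(t)=D^{\nabla^t}(B_ta)$ is equally valid but, as written, only gives the inclusion $B_t(\Gamma\W_{D^{\nabla^0}})\subset\Gamma\W_{D^{\nabla^t}}$ — to get surjectivity you should either run the same argument for $\phi(t)=D^{\nabla^t}(B_ta)-B_t(D^{\nabla^0}a)$ with $a$ arbitrary, or argue via the reversed path. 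One small point to tighten: "strictly positive Weyl-degree" for $h_t$ is not enough to conclude that $\frac{1}{\nu}h_t\circ\cdot$ raises the total degree (since $\nu^{-1}$ has degree $-2$); you need, and in fact have, $h_t\in\Gamma\W^3$, because $\dot A_t$ has $\W$-degree $\geq 2$ and $-(D^{\nabla^t})^{-1}=Q^{\nabla^t}\delta^{-1}$ raises it by at least one.
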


\noindent We reproduce the proof as our results rely on the above Theorem where $B_t$ will be interpreted as a parallel transport above the curve $t \mapsto \nabla^t$.

\begin{proof}
We first show that $h_t$ is well-defined, that is we compute $D^{\nabla^t}\left(\ddt \overline{\Gamma}^{\nabla^t}+\ddt r^{\nabla^t}\right)=0$. Write $D^{\nabla^0}=D^{\nabla^t}-\frac{1}{\nu} [\Delta\Gamma_t+\Delta r_t,\cdot]$ for $\Delta\Gamma_t=\overline{\Gamma}^{\nabla^t}-\overline{\Gamma}_0$ and $\Delta r_t=r^{\nabla^t}-r^{\nabla^0}$. Computing the curvatures, we obtain
$$-D^{\nabla^t}\left(\Delta\Gamma_t+\Delta r_t\right)+\frac{1}{\nu} \left(\Delta\Gamma_t+\Delta r_t\right)^2=0.$$
Differentiating with respect to $t$, we get
$$-(\ddt D^{\nabla^t})\left(\Delta\Gamma_t+\Delta r_t\right)-D^{\nabla^t} \left(\ddt \overline{\Gamma}^{\nabla^t}+\ddt r^{\nabla^t}\right) + \frac{1}{\nu}[\ddt \overline{\Gamma}^{\nabla^t}+\ddt r^{\nabla^t}, \Delta\Gamma_t+\Delta r_t]=0.$$
Since $\ddt D^{\nabla^t}=\frac{1}{\nu}[\ddt \overline{\Gamma}^{\nabla^t}+\ddt r^{\nabla^t},\cdot]$, we have
$$D^{\nabla^t} \left(\ddt \overline{\Gamma}^{\nabla^t}+\ddt r^{\nabla^t}\right)=0.$$
So that $h_t$ is well defined by Lemma \ref{lemme:Dinverse}

After that, let us recall that the solution $v_t$ to Equation \eqref{eq:vt} is given recursively by 
\begin{equation} \label{eq:defvt}
v_t=1+\int_0^t \frac{1}{\nu} h_t\circ v_t\, dt,
\end{equation}
which is well defined in $\Gamma \W^+$ as $h_t\in \Gamma\W$ is of degree at least $3$ so that $\frac{1}{\nu} h_t\circ \cdot$ rises the $\W$-degree at least by $1$. Moreover, $v_t$ is invertible as it starts with $1$ modulo terms of higher $\W$-degree.

We conclude the proof by computing $D^{\nabla^t}(v_t\circ a \circ v_t^{-1})$ for some $a\in \Gamma \W$.
\begin{eqnarray*}
D^{\nabla^t}(v_t\circ a \circ v_t^{-1}) & = & v_t \circ[v_t^{-1}\circ D^{\nabla^t}v_t,a]\circ v_t^{-1}+ v_t\circ D^{\nabla^t}a\circ v_t^{-1}, \\
 & = & v_t \circ[v_t^{-1}\circ D^{\nabla^t}v_t+\frac{1}{\nu} \left(\Delta\Gamma_t+\Delta r_t\right) ,a]\circ v_t^{-1}+ v_t\circ D^{\nabla^0}a\circ v_t^{-1}.
\end{eqnarray*}
We now analyse the expression $v_t^{-1}\circ D^{\nabla^t}v_t+\frac{1}{\nu} \left(\Delta\Gamma_t+\Delta r_t\right)$. At $t=0$, it vanishes. Its derivative with respect to $t$ gives
\begin{eqnarray*}
\ddt \left(v_t^{-1}\circ D^{\nabla^t}v_t+\frac{1}{\nu} \left(\Delta\Gamma_t+\Delta r_t\right)\right)
& = & \invnu v_t^{-1}\circ \left( D^{\nabla^t} h_t + \invnu \left(\ddt \overline{\Gamma}^{\nabla^t}+\ddt r^{\nabla^t}\right)\right) \circ v_t\\
 & = & 0\ (\textrm{by definition of } h_t).
\end{eqnarray*}
It shows that  $v_t^{-1}\circ D^{\nabla^t}v_t+\frac{1}{\nu} \left(\Delta\Gamma_t+\Delta r_t\right)=0$. So that, $D^{\nabla^t}(v_t\circ a \circ v_t^{-1}) =v_t\circ D^{\nabla^0}a\circ v_t^{-1}$, in particular $B_t$ maps $D^{\nabla^0}$-flat sections to $D^{\nabla^t}$-flat sections, which finishes the proof.
\end{proof}

\begin{rem}
We wrote in the statement that $B_t$ is canonical, by this we mean that it is completely determined by the data of the smooth path of symplectic connections. Other automorphisms can be obtained by modifying for example $h_t$ by a flat section or by considering $\tilde{h}_t:=-(D^{\nabla^t})^{-1}\left(\ddt \overline{\Gamma}^{\nabla^t}+\ddt r^{\nabla^t}+ \theta \right)$ for any series of closed $1$-form $\theta$.
\end{rem}

\begin{rem}
One remark should be done with respect to the regularity of $t\mapsto v_t$. First, $h_t$ given by Equation \eqref{eq:ht} depends polynomially on $\ddt \overline{\Gamma}^{\nabla^t}$, its covariant derivatives and the curvature of $\nabla^t$, so that $t \mapsto h_t$ is a smooth path in $\Gamma \W^3$, that is with total degree greater or equal to $3$. In Equation \eqref{eq:defvt} defining $v_t$, we integrate, in each degree, a smooth path that is a finite sum of symmetric tensors. Moreover, it is a classical result \cite{fed2}, that $v_t$ can be realised as $\exp(\invnu s_t)$ for $s_t \in \Gamma \W^3$ and for which the term of $\W$-degree $k$ in $s_t$ depends polynomially in the lower degree terms of $h_t$. It means the path $t\mapsto v_t$ is smooth.
\end{rem}

\begin{rem}
For $v_t\in \Gamma \W^+$ solution of Equation \eqref{eq:vt}, we say that $v_t$, as well as $v_1$ are \emph{generated} by $h_t$.
\end{rem}

\subsection{A canonical compatible formal connection on $\V$} \label{subsect:canonical}

We give a compatible formal connection on $\V$ following the construction in \cite{MasulliThesis}. The idea is to consider the automorphisms $B_t$ from Theorem \ref{theor:smoothisom} as a parallel transport along a path of symplectic connections.

That is, given a smooth path $t\in [0,1]\mapsto \nabla^t$ and $F\in C^{\infty}(M)[[\nu]]$, we want a formal connection $\D$ so that 
$$\D_{\ddt \nabla^t} \left(\left.\left(v_t\circ Q^{\nabla^0}(F)\circ v_t^{-1}\right)\right|_{y=0}\right)=0,$$
for $v_t$ generated by $h_t$ as in Theorem \ref{theor:smoothisom}, where we see $\left.\left(v_t\circ Q^{\nabla^0}(F)\circ v_t^{-1}\right)\right|_{y=0}$ as a section of the restriction of $\V$ to $[0,1]$. If we write $\D_{\ddt \nabla^t}=\ddt +\beta(\ddt \nabla_t)$ as in Equation \eqref{eq:Ddef}, we obtain
\begin{equation} \label{eq:beta}
\left.\invnu[h_t,v_t\circ Q^{\nabla^0}(F)\circ v_t^{-1}]\right|_{y=0} + \beta(\ddt \nabla^t)\left.\left(v_t\circ Q^{\nabla^0}(F)\circ v_t^{-1}\right)\right|_{y=0}=0.
\end{equation}

\begin{defi} \label{def:formalconn}
Define, for $A\in T_{\nabla}\E(M,\omega)$,
\begin{itemize}
\item the \emph{connection $1$-form} $\alpha\in \Omega^1(\E(M,\omega),\Gamma\W^3)$ by
\begin{equation} \label{eq:defalpha}
\alpha_{\nabla}(A):=(D^{\nabla})^{-1}\left(\overline{A}+ \ddt r^{\nabla+tA}\right),
\end{equation}
for $\overline{A}:=\frac{1}{2}\omega_{lk}A_{ij}^ky^ly^jdx^i=\ddt \overline{\Gamma}^{\nabla+tA}$,
\item the $1$-form $\beta$ with values in formal differential operators:
\begin{equation*}
\beta_{\nabla}(A)(F):= \left.\invnu[\alpha_{\nabla}(A),Q^{\nabla}(F)]\right|_{y=0},\,\textrm{ for }F\in C^{\infty}(M)[[\nu]],
\end{equation*}
\item the \emph{formal connection} $\D:= d+\beta$
\end{itemize}
\end{defi}  

\noindent As by Theorem \ref{theor:smoothisom}: $v_t\circ Q^{\nabla^0}(F)\circ v_t^{-1}=Q^{\nabla^t}\left(\left.\left(v_t\circ Q^{\nabla^0}(F)\circ v_t^{-1}\right)\right|_{y=0}\right)$, then Equation \eqref{eq:beta} is satisfied because
\begin{equation*}\label{eq:defA}
\alpha_{\nabla^t}(\ddt \nabla^t)=-h_t
\end{equation*}

As we will mainly work inside the Fedosov framework, we propose a formula for $Q^{\nabla}(\D_AF(\nabla))$ for a section $F$ of $\V$, and $A$ a tangent vector at $\nabla$. It can be interpreted as a connection on a bundle of flat sections of Weyl algebra, that is the subbundle
$$\V^{\W}\subset \E(M,\omega)\times \Gamma\W \stackrel{q}{\longrightarrow} \E(M,\omega).$$
for $q$ the projection on the first factor and $\V^{\W}$ consists of elements $(\nabla,a)\in \E(M,\omega)\times \Gamma\W$ such that $D^{\nabla}a=0$.

\begin{lemma} \label{lemme:QDF}
For $F(\cdot)$ a section of $\V$ and $A\in T_{\nabla}\E(M,\omega)$ with $\nabla\in \E(M,\omega)$, we have
\begin{equation} \label{eq:QDF}
Q^{\nabla}((\D_AF)(\nabla))= A \left(Q^{\cdot}(F(\cdot)\right)+ \invnu[\alpha_{\nabla}(A),Q^{\nabla}(F)],
\end{equation}
where $A \left(Q^{\cdot}(F(\cdot)\right):= \left.\ddt\right|_{0} Q^{\nabla+tA}(F(\nabla+tA))$ is the standard notation.
\end{lemma}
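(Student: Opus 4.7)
The plan is to unpack the definition of $\D$, apply $Q^{\nabla}$, and recognize the extra term in \eqref{eq:QDF} as the infinitesimal version of the parallel transport identity already built into Theorem \ref{theor:smoothisom}.

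First, I would expand the right-hand side of \eqref{eq:QDF} using the path $\nabla^{t}:=\nabla+tA$. Writing $Q^{\nabla+tA}(F(\nabla+tA))$ as a function of $t$ and differentiating at $t=0$ by the Leibniz rule, I get
$$A(Q^{\cdot}(F(\cdot)))=\left.\ddt\right|_{0}Q^{\nabla+tA}(F(\nabla))+Q^{\nabla}(A(F)),$$
using linearity of $Q^{\nabla}$. On the other hand, by Definition \ref{def:formalconn} the left-hand side is
$$Q^{\nabla}((\D_{A}F)(\nabla))=Q^{\nabla}(A(F))+Q^{\nabla}\!\left(\left.\invnu[\alpha_{\nabla}(A),Q^{\nabla}(F(\nabla))]\right|_{y=0}\right).$$
The $Q^{\nabla}(A(F))$ terms cancel, so the lemma reduces to the identity
$$Q^{\nabla}\!\left(\left.\invnu[\alpha_{\nabla}(A),f]\right|_{y=0}\right)=\left.\ddt\right|_{0}Q^{\nabla+tA}(F(\nabla))+\invnu[\alpha_{\nabla}(A),f],$$
where I set $f:=Q^{\nabla}(F(\nabla))\in\Gamma\W_{D^{\nabla}}$.

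To establish this, I would apply Theorem \ref{theor:smoothisom} to the path $\nabla^{t}$: let $v_{t}\in\Gamma\W^{+}$ be generated by $h_{t}=-(D^{\nabla^{t}})^{-1}(\ddt\overline{\Gamma}^{\nabla^{t}}+\ddt r^{\nabla^{t}})$, so that $B_{t}(f)=v_{t}\circ f\circ v_{t}^{-1}$ is $D^{\nabla^{t}}$-flat for every $t$. Since flat sections are recovered from their symbol via $Q$, I have
$$v_{t}\circ f\circ v_{t}^{-1}=Q^{\nabla^{t}}\!\left(\left.\left(v_{t}\circ f\circ v_{t}^{-1}\right)\right|_{y=0}\right).$$
At $t=0$ the definition of $h_{t}$ together with the definition \eqref{eq:defalpha} of $\alpha$ gives $h_{0}=-\alpha_{\nabla}(A)$, and $v_{0}=1$ with $\ddt|_{0}v_{t}=\invnu h_{0}$, so differentiating the left-hand side of the displayed equality yields $\invnu[h_{0},f]=-\invnu[\alpha_{\nabla}(A),f]$.

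Differentiating the right-hand side at $t=0$ and again separating the dependence on $t$ through $Q^{\nabla^{t}}$ and through the symbol, I obtain
$$\left.\ddt\right|_{0}Q^{\nabla+tA}(f|_{y=0})+Q^{\nabla}\!\left(\left.\invnu[h_{0},f]\right|_{y=0}\right)=\left.\ddt\right|_{0}Q^{\nabla+tA}(F(\nabla))-Q^{\nabla}\!\left(\left.\invnu[\alpha_{\nabla}(A),f]\right|_{y=0}\right),$$
using $f|_{y=0}=F(\nabla)$. Equating the two expressions for the $t$-derivative and rearranging gives exactly the reduced identity above, which proves \eqref{eq:QDF}.

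The routine but only delicate step is the bookkeeping in the Leibniz rule, namely being careful that in $\ddt|_{0}Q^{\nabla+tA}(F(\nabla+tA))$ one splits the $t$-dependence coming from $Q$ and from $F$ coherently, and identifying $h_{0}=-\alpha_{\nabla}(A)$ directly from Definition \ref{def:formalconn} and Equation \eqref{eq:ht}. Everything else is a direct consequence of the fact, already exploited in the proof of Theorem \ref{theor:smoothisom}, that $v_{t}\circ(\cdot)\circ v_{t}^{-1}$ maps $D^{\nabla}$-flat sections to $D^{\nabla+tA}$-flat sections, so that the symbol map can be used to convert $\circ$-algebra statements into statements about formal functions.
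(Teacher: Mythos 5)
Your proof is correct, but it reaches the identity by a genuinely different route than the paper. The paper quotes Fedosov's variation formula $\left.\ddt\right|_{0}Q^{\nabla+tA}(G)=-(D^{\nabla})^{-1}\bigl(\invnu[\overline{A}+\left.\ddt\right|_{0}r^{\nabla+tA},Q^{\nabla}(G)]\bigr)$ from the trace-density paper, observes that this term has vanishing symbol so both sides of \eqref{eq:QDF} agree at $y=0$, and then verifies directly (using that $D^{\nabla}$ is a derivation and the defining equation $D^{\nabla}\alpha_{\nabla}(A)=\overline{A}+\left.\ddt\right|_{0}r^{\nabla+tA}$) that the right-hand side is $D^{\nabla}$-flat, which pins it down as $Q^{\nabla}$ of its symbol. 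You instead avoid the external variation formula entirely: after the same Leibniz bookkeeping that reduces the lemma to the identity for $f=Q^{\nabla}(F(\nabla))$, you differentiate the parallel-transport relation $v_{t}\circ f\circ v_{t}^{-1}=Q^{\nabla^{t}}\bigl(\left.(v_{t}\circ f\circ v_{t}^{-1})\right|_{y=0}\bigr)$ of Theorem \ref{theor:smoothisom} at $t=0$, using $h_{0}=-\alpha_{\nabla}(A)$ and $\left.\ddt\right|_{0}v_{t}=\invnu h_{0}$. This makes transparent that the lemma is literally the infinitesimal form of the parallel-transport identity \eqref{eq:beta} that motivated the definition of $\D$, at the cost of leaning on the existence and smoothness of $v_{t}$ and of $t\mapsto Q^{\nabla^{t}}$ (which the paper's remarks after Theorem \ref{theor:smoothisom} do justify); the paper's argument is more self-contained at this point and, as a by-product, exhibits the explicit flat representative $A(Q^{\cdot}(F(\cdot)))+\invnu[\alpha_{\nabla}(A),Q^{\nabla}(F)]$ together with the reason for its flatness, a computation that is reused later (e.g.\ in the Bianchi identity and in Theorem \ref{theor:Ralpha}).
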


\begin{proof}
Recall the formula $\left.\ddt\right|_{0} Q^{\nabla+tA}(G)=-(D^{\nabla})^{-1}(\invnu [\overline{A}+\left.\ddt\right|_{0}r^{\nabla+tA},Q^{\nabla}(G)])$ with $G\in C^{\infty}(M)[[\nu]]$ which may be found in \cite{fed4}. Hence, 
$$ \left.\ddt\right|_{0} Q^{\nabla+tA}(F(\nabla+tA))= Q^{\nabla}\left(\left.\ddt\right|_{0} F(\nabla+tA)\right)-(D^{\nabla})^{-1}\left(\invnu [\overline{A}+\left.\ddt\right|_{0}r^{\nabla+tA},Q^{\nabla}(F(\nabla))]\right).$$

Evaluating both sides of \eqref{eq:QDF} at $y=0$,  since $-(D^{\nabla})^{-1}(\invnu [\overline{A}+\left.\ddt\right|_{0}r^{\nabla+tA},Q^{\nabla}(F(\nabla))])$ do not contribute, one recovers the definition of $\D$. It then remains to show the RHS is a flat section, which amounts to the computation of
\begin{eqnarray*}
D^{\nabla}\left( \invnu[\alpha_{\nabla}(A),Q^{\nabla}(F(\nabla))]-(D^{\nabla})^{-1}(\invnu [\overline{A}+\left.\ddt\right|_{0}r^{\nabla+tA},Q^{\nabla}(F(\nabla))])\right).
\end{eqnarray*}
Using that $D^{\nabla}$ is a derivation, the above expression becomes
$$ \invnu[D^{\nabla}\alpha_{\nabla}(A),Q^{\nabla}(F)]  - \invnu [\overline{A}+\left.\ddt\right|_{0}r^{\nabla+tA},F(\nabla)]=0,$$
by definition of $\alpha$. It finishes the proof.
\end{proof}

\begin{theorem}
The formal connection $\D$ on $\V$ is compatible with the tautological family of star products.
\end{theorem}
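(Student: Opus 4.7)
The plan is to reduce compatibility to an identity in the Weyl algebra via the Fedosov isomorphism $Q^{\nabla}$ and Lemma \ref{lemme:QDF}. Since $Q^{\nabla}:(C^{\infty}(M)[[\nu]],*_{\nabla}) \to (\Gamma\W_{D^{\nabla}},\circ)$ is an algebra isomorphism, $Q^{\nabla}$ intertwines $*_{\nabla}$ and $\circ$, and $Q^{\nabla}$ is injective. Hence it suffices to verify the Leibniz rule after applying $Q^{\nabla}$ to both sides of $\D_A(F*_{\nabla}G) = \D_AF *_{\nabla} G + F *_{\nabla} \D_A G$.

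First, I would apply Lemma \ref{lemme:QDF} to $F*_{\cdot}G$ to write
\begin{equation*}
Q^{\nabla}\bigl((\D_A(F*_{\cdot}G))(\nabla)\bigr) \;=\; A\bigl(Q^{\cdot}(F(\cdot)*_{\cdot}G(\cdot))\bigr) \;+\; \invnu\bigl[\alpha_{\nabla}(A),\, Q^{\nabla}(F)\circ Q^{\nabla}(G)\bigr],
\end{equation*}
using $Q^{\cdot}(F*_{\cdot}G)=Q^{\cdot}(F)\circ Q^{\cdot}(G)$.

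Second, I would expand each of the two summands by a Leibniz rule for $\circ$. For the commutator term this is immediate: $\alpha_{\nabla}(A)\in \Gamma\W^3$ has antisymmetric degree zero, so $\invnu[\alpha_{\nabla}(A),\cdot]$ is an ordinary (ungraded) derivation of $\circ$. For the directional derivative $A$, the key observation is that the fiberwise $\circ$-product on $\Gamma\W$ is \emph{independent} of $\nabla$ (the $\nabla$-dependence is carried by the flat connection $D^{\nabla}$ and the isomorphism $Q^{\nabla}$, not by the algebra structure on $\Gamma\W$); therefore the usual Leibniz rule for differentiating a path of products holds:
\begin{equation*}
A\bigl(Q^{\cdot}(F)\circ Q^{\cdot}(G)\bigr)\;=\;A\bigl(Q^{\cdot}(F)\bigr)\circ Q^{\nabla}(G)\;+\;Q^{\nabla}(F)\circ A\bigl(Q^{\cdot}(G)\bigr).
\end{equation*}
Combining both Leibniz expansions and regrouping the terms containing $F$ on the left with those containing $G$ on the right, I recover
\begin{equation*}
Q^{\nabla}\bigl((\D_A(F*_{\cdot}G))(\nabla)\bigr)\;=\;Q^{\nabla}\bigl((\D_AF)(\nabla)\bigr)\circ Q^{\nabla}(G)\;+\;Q^{\nabla}(F)\circ Q^{\nabla}\bigl((\D_AG)(\nabla)\bigr),
\end{equation*}
again by Lemma \ref{lemme:QDF}. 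Injectivity of $Q^{\nabla}$ together with the fact that $Q^{\nabla}$ intertwines $\circ$ and $*_{\nabla}$ then yields the compatibility identity.

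The only genuinely delicate point is checking that $A$ behaves as a derivation of $\circ$ along the path $t\mapsto \nabla+tA$; this is not a tautology since the flat subbundle $\V^{\W}\subset \E(M,\omega)\times \Gamma\W$ varies with $\nabla$. The resolution is exactly the remark above: $\circ$ is a fixed algebra structure on the total space $\Gamma\W$, and one is differentiating a smooth curve of pairs inside this fixed algebra, so the classical Leibniz rule applies. Everything else in the argument is bookkeeping with the formula provided by Lemma \ref{lemme:QDF}.
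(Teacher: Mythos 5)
Your proposal is correct and follows essentially the same route as the paper: both reduce compatibility to the Weyl algebra via Lemma \ref{lemme:QDF}, use that $Q^{\cdot}$ intertwines $*_{\cdot}$ with the fixed fiberwise $\circ$-product, apply the ordinary Leibniz rule to the directional derivative of $Q^{\cdot}(F)\circ Q^{\cdot}(G)$, and use that $\invnu[\alpha_{\nabla}(A),\cdot]$ is a derivation of $\circ$. Your explicit justification of the Leibniz step (that $\circ$ does not depend on $\nabla$, only $D^{\nabla}$ and $Q^{\nabla}$ do) makes precise a point the paper passes over with the phrase ``by definition of the Fedosov star product,'' but the argument is the same.
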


\begin{proof}
We consider $F,G \in C^{\infty}(M)[[\nu]]$ seen as constant sections of $\V$. Then, we have a section $\nabla \mapsto F *_{\nabla}G$ which is not constant in general. Let $A\in T_{\nabla}\E(M,\omega)$ a tangent vector. By Lemma \ref{lemme:QDF}, we have
\begin{eqnarray} \label{eq:DAF*G}
Q^{\nabla}(\D_A(F*_{\cdot}G)(\nabla)) & = & \ddto Q^{\nabla+tA}(F*_{\nabla+tA} G) + \invnu[\alpha_{\nabla}(A),Q^{\nabla}(F*_{\nabla}G)]
\end{eqnarray}
By definition of the Fedosov star product $*_{\nabla+tA}$, we have:
$$\ddto Q^{\nabla+tA}(F*_{\nabla+tA} G) =\left(\ddto Q^{\nabla+tA}(F)\right)\circ Q^{\nabla}(G)+ Q^{\nabla}(F)\circ \ddto Q^{\nabla+tA}(G).$$
On the other hand, using Lemma \ref{lemme:QDF} with constant section $F$, we get
\begin{eqnarray}
Q^{\nabla}((\D_A F)*_{\nabla} G) & = &  Q^{\nabla}(\D_AF)\circ Q^{\nabla}(G) \nonumber \\
 & = &  \left( \left.\ddt\right|_{0} Q^{\nabla+tA}(F) + \invnu[\alpha_{\nabla}(A),Q^{\nabla}(F)] \right)\circ Q^{\nabla}(G) \label{eq:DAFoG}
\end{eqnarray}
Similarly,
\begin{equation} \label{eq:FoDAG}
Q^{\nabla}(F*_{\nabla} (\D_A G)) =  Q^{\nabla}(F) \circ\left( \left.\ddt\right|_{0} Q^{\nabla+tA}(G) + \invnu[\alpha_{\nabla}(A),Q^{\nabla}(G)]\right) 
\end{equation}
Equations \eqref{eq:DAF*G}, \eqref{eq:DAFoG}, \eqref{eq:FoDAG} and Leibniz for the $\circ$-commutator, we get
$$\D_A(F*_{\cdot}G) = (\D_A F)*_{\cdot} G + F*_{\cdot} (\D_A G),$$
which means $\D$ is compatible with the tautological family of star products.
\end{proof}

\section{Curvature and a formal symplectic form on $\E(M,\omega)$}

\subsection{The curvature of $\D$}

Consider vector fields $A, B$ on $\E(M,\omega)$ and $F(\cdot)$ a section of $\V$, the curvature $\mathcal{R}$ of $\D$ is defined as
$$\mathcal{R}(A,B)F=\D_A(\D_BF)-\D_B(\D_AF)-\D_{[A,B]}(F).$$

\begin{theorem} \label{theor:Ralpha}
The curvature of $\D$ is given by
\begin{equation}\label{eq:Rdef}
\RR(A,B)F=\left.\invnu[\Rr(A,B),Q(F)]\right|_{y=0}
\end{equation}
for $\Rr(A,B)$ being the $2$-form with values in $\Gamma\W^3$ defined by
\begin{equation}\label{eq:Ralpha}
\Rr_{\nabla}(A,B):=d_{\nabla}\alpha(A,B)+\invnu[\alpha_{\nabla}(A),\alpha_{\nabla}(B)],\; \textrm{ for } \nabla\in \E(M,\omega)
\end{equation}
Moreover, 
\begin{itemize}
\item $\Rr_{\nabla}(A,B)$ is a $D^{\nabla}$-flat section,
\item $\left.\Rr_{\nabla}(A,B)\right|_{y=0}=\frac{\nu^2}{24}\Lambda^{i_1j_1}\Lambda^{i_2j_2}\Lambda^{i_3j_3}\underline{A}_{i_1i_2i_3}\underline{B}_{j_1j_2j_3} + O(\nu^3)$.
\end{itemize}
\end{theorem}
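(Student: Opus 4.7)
The plan is to transport the curvature computation to the auxiliary bundle $\V^{\W}$ of $D^{\nabla}$-flat Weyl-algebra sections, where the formal connection acts as the star-commutator with $\invnu\alpha$. By Lemma \ref{lemme:QDF}, $Q^{\cdot}$ intertwines $\D_A$ with the operator $\tilde{\D}_A s := A(s) + \invnu[\alpha(A),s]$ on sections of the constant bundle $\E(M,\omega)\times\Gamma\W$. Expanding $[\tilde{\D}_A,\tilde{\D}_B]-\tilde{\D}_{[A,B]}$ and using the graded Jacobi identity
\begin{equation*}
[\alpha(A),[\alpha(B),s]]-[\alpha(B),[\alpha(A),s]]=[[\alpha(A),\alpha(B)],s]
\end{equation*}
together with $d\alpha(A,B) = A(\alpha(B))-B(\alpha(A))-\alpha([A,B])$ gives $\tilde{\RR}(A,B)s = \invnu[\Rr(A,B),s]$, where $\Rr$ is defined by \eqref{eq:Ralpha}. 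Applying $\sigma=(\cdot)|_{y=0}$ to the intertwined identity $Q^{\nabla}(\RR(A,B)F) = \invnu[\Rr(A,B),Q^{\nabla}(F)]$ will then yield \eqref{eq:Rdef}, provided the right-hand side is $D^{\nabla}$-flat -- which is why flatness of $\Rr(A,B)$ must be established.

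I will check flatness directly. The defining equation $\alpha(A)=(D^{\nabla})^{-1}(\overline{A}+\partial_A r^{\nabla})$ (with $\partial_A$ denoting the derivative along $\E$ in direction $A$) gives $D^{\nabla}\alpha(A)=\overline{A}+\partial_A r^{\nabla}$. Writing this identity at $\nabla+tA$, differentiating at $t=0$, and using the variational formula $\partial_A D^{\nabla}=\invnu[\overline{A}+\partial_A r^{\nabla},\cdot]=\invnu[D^{\nabla}\alpha(A),\cdot]$ together with $\partial_A \overline{B}=0$ yields
\begin{equation*}
D^{\nabla}(A(\alpha(B)))=\partial_A\partial_B\, r^{\nabla}-\invnu[D^{\nabla}\alpha(A),\alpha(B)].
\end{equation*}
Antisymmetrizing in $A,B$ (taken WLOG to be constant vector fields, since $\Rr$ is tensorial) and invoking commutativity of mixed partials on $r^{\nabla}$ produces $D^{\nabla}d\alpha(A,B) = \invnu[D^{\nabla}\alpha(B),\alpha(A)] - \invnu[D^{\nabla}\alpha(A),\alpha(B)]$. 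The graded Leibniz rule applied to $\invnu[\alpha(A),\alpha(B)]$ yields the opposite combination, and the two cancel to give $D^{\nabla}\Rr(A,B)=0$.

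For the leading-order computation at $y=0$, I work directly in the Weyl algebra. From $\delta^{-1}\overline{A}=\frac{1}{6}\underline{A}_{lji}y^l y^j y^i$ and $Q=1+O(\delta^{-1}D')$, the lowest-total-degree part of $\alpha(A)$ is the $\nabla$-independent element $-\frac{1}{6}\underline{A}_{lji}y^l y^j y^i$ of total degree three. Tracing the Fedosov iteration shows that the total-degree four contributions to $\alpha(A)$ carry only $y^4$ factors, hence $\alpha(A)|_{y=0}=O(\nu^3)$ and $d\alpha(A,B)|_{y=0}=O(\nu^3)$. The $\nu^2$ contribution to $\Rr(A,B)|_{y=0}$ therefore comes entirely from the $k=3$ term of the star-commutator expansion
\begin{equation*}
\invnu[a,b] = \Lambda^{ij}\partial_{y^i}a\,\partial_{y^j}b + \frac{\nu^2}{24}\Lambda^{i_1j_1}\Lambda^{i_2j_2}\Lambda^{i_3j_3}\partial^3_{y^{i_1}y^{i_2}y^{i_3}}a\cdot\partial^3_{y^{j_1}y^{j_2}y^{j_3}}b + O(\nu^4),
\end{equation*}
acting on the cubic leading parts of $\alpha(A), \alpha(B)$ via $\partial^3_{y^{i_1}y^{i_2}y^{i_3}}(-\frac{1}{6}\underline{A}_{lji}y^l y^j y^i) = -\underline{A}_{i_1i_2i_3}$; contraction with the three $\Lambda$'s produces the claimed $\frac{\nu^2}{24}\Lambda^{i_1j_1}\Lambda^{i_2j_2}\Lambda^{i_3j_3}\underline{A}_{i_1i_2i_3}\underline{B}_{j_1j_2j_3}$. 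The main obstacle throughout is the flatness argument, where the signs in the graded Leibniz rule and the variational identity for $D^{\nabla}$ along $\E(M,\omega)$ demand care; once it is in place, both \eqref{eq:Rdef} and the leading term follow by the bookkeeping above.
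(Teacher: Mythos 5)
Your proposal is correct and follows essentially the same route as the paper: the curvature formula via Lemma \ref{lemme:QDF} and the graded Jacobi identity, flatness by differentiating the defining relation $D^{\nabla}\alpha(A)=\overline{A}+\partial_A r^{\nabla}$ so that the mixed partials of $r$ cancel against the Leibniz expansion of $D[\alpha(A),\alpha(B)]$, and the leading term from the cubic part $-\frac{1}{6}\underline{A}_{lji}y^ly^jy^i$ of $\alpha(A)$ paired through the $\nu^3/24$ term of the Moyal commutator. The only cosmetic difference is that $\alpha(A)|_{y=0}$ vanishes exactly by the normalization built into $(D^{\nabla})^{-1}$ in Lemma \ref{lemme:Dinverse}, rather than merely being $O(\nu^3)$ as you argue, but this does not affect the conclusion.
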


\begin{rem}
Note that the leading term of $\Rr(A,B)$ is, up to a constant, the integrand of the symplectic form $\Omega^{\E}$. It is non degenerate at every point.
\end{rem}

\begin{rem}
Most of the statement already appeared in \cite{AMS}. Our contribution is the flatness of $\Rr_{\nabla}(A,B)$ and the identification of its first term.
\end{rem}

\begin{proof}
We compute $Q^{\nabla}(\mathcal{R}(A,B)F)$ using Lemma \ref{lemme:QDF}:
\begin{eqnarray*}
Q^{\nabla}(\D_A(\D_BF)(\nabla)) & = & A_{\nabla}(Q^{\cdot}(\D_BF(\cdot)))+\invnu [\alpha_{\nabla}(A),Q^{\nabla}(\D_BF)(\nabla)] \\
 & = & A_{\nabla}(dQ^{\cdot}(F(\cdot))(B_{\cdot}))+A_{\nabla}(\invnu[\alpha_{\cdot}(B),Q^{\cdot}(F(\cdot))])\\
  & & +\invnu\left[\alpha_{\nabla}(A),B_{\nabla}(Q^{\cdot}(F(\cdot)))+\invnu[\alpha_{\nabla}(B),Q^{\nabla}(F(\nabla))]\right].\\
\end{eqnarray*}
Exchanging the roles of $A$ and $B$ we get,
\begin{eqnarray*}
Q^{\nabla}(\D_B(\D_AF)(\nabla)) 
 & = & B_{\nabla}(dQ^{\cdot}(F(\cdot))(A_{\cdot}))+B_{\nabla}(\invnu[\alpha_{\cdot}(A),Q^{\cdot}(F(\cdot))])\\
  & & +\invnu\left[\alpha_{\nabla}(B),A_{\nabla}(Q^{\cdot}(F(\cdot)))+\invnu[\alpha_{\nabla}(A),Q^{\nabla}(F(\nabla))]\right].\\
\end{eqnarray*}
The last term is 
\begin{eqnarray*}
Q^{\nabla}(\D_{[A,B]}F(\nabla)) 
 & = & dQ^{\cdot}(F(\cdot))([A,B]_{\nabla})+\invnu\left[\alpha_{\nabla}([A,B]),Q^{\nabla}(F(\nabla))\right]
\end{eqnarray*}
Using $d^2=0$, and Jacobi equation, we have
\begin{equation*}
Q^{\nabla}(\mathcal{R}(A,B)F(\nabla))=\invnu\left[d\alpha(A_{\nabla},B_{\nabla}) + \invnu[\alpha_{\nabla}(A),\alpha_{\nabla}(B)],Q^{\nabla}(F(\nabla))\right],
\end{equation*}
proving the Equations \eqref{eq:Rdef} and \eqref{eq:Ralpha}.

To compute $\Rr_{\nabla}(A,B)$ is a flat section, let us assume $A$ and $B$ are constant vector fields and denote by $D_{s,t}$ the Fedosov connection, and the element $r_{s,t}$, induced by the symplectic connection $\nabla_{s,t}:=\nabla+sA+tB$. Because $A$ and $B$ are constant, $d\alpha(A_{\nabla},B_{\nabla}) =A_{\nabla}(\alpha(B))-B_{\nabla}(\alpha(A))$. Now,
\begin{eqnarray*}
D(A_{\nabla}(\alpha(B)))& = & D(\left.\frac{d}{ds}\right|_{0}\alpha_{\nabla+sA}(B)) \\
& = & \left.\frac{d}{ds}\right|_{0}D_{s,0}\alpha_{\nabla+sA}(B)- (\ddso D_{s,0})\alpha_{\nabla}(B) \\
& = & \left.\frac{d}{dsdt}\right|_{s=t=0}r_{s,t}-\invnu[\overline{A}+\ddso r_{s,0},\alpha_{\nabla}(B)]
\end{eqnarray*}
where we used $D_{s,0}\alpha_{\nabla+sA}(B)=\overline{B}+\ddto r_{s,t}$ by definition of $\alpha$.
Similarly,
\begin{equation*}
D(B_{\nabla}(\alpha(A))) = \left.\frac{d}{dsdt}\right|_{s=t=0}r_{s,t}-\invnu[\overline{B}+\ddto r_{0,t},\alpha_{\nabla}(A)]
\end{equation*}
Finally,
\begin{equation*}
D[\alpha_{\nabla}(A),\alpha_{\nabla}(B)]=[\overline{A}+\ddso r_{s,0},\alpha_{\nabla}(B)]+\invnu[\alpha_{\nabla}(A),\overline{B}+\ddto r_{0,t}],
\end{equation*}
which shows that $D\Rr_{\nabla}(A,B)=0$ as stated.

We identify now the first order term of $\left.\Rr_{\nabla}(A,B)\right|_{y=0}$, with $A$, $B$ constant vector fields. We start with $\left.d\alpha(A,B)\right|_{y=0}$. Well,
\begin{eqnarray*}
\left.\frac{d}{ds}\right|_{0}\alpha_{\nabla+sA}(B) & = & \left.\frac{d}{ds}\right|_{0}(D_{s,0})^{-1}(\overline{B}+\ddto r_{s,t})
\end{eqnarray*}
which always contain term in $y$'s by definition of $(D_{s,0})^{-1}$. Similar computation holds for $\ddto \alpha_{\nabla+tB}(A)$, so that $\left.d\alpha(A,B)\right|_{y=0}=0$.
Finally, we compute $\left.[\alpha_{\nabla}(A),\alpha_{\nabla}(B)]\right|_{y=0}$.
Using the formula from Lemma \ref{lemme:Dinverse} for $D^{-1}$,
\begin{eqnarray*}
\alpha_{\nabla}(A) & = & \delta^{-1}(\overline{A})\, (\textrm{mod } \Gamma\W^4) \\
 & = & -\frac{1}{6} \omega_{lk}A^k_{ij} y^iy^jy^l \,(\textrm{mod } \Gamma\W^4) 
\end{eqnarray*}
Also, 
\begin{eqnarray*}
\alpha_{\nabla}(B) & = & -\frac{1}{6} \omega_{lk}B^k_{ij} y^iy^jy^l \,(\textrm{mod } \Gamma\W^4),
\end{eqnarray*}
and using that $[\cdot,\cdot]$ preserves the $\W$-degree and that the terms omitted from $\Gamma\W^4$ must contain at least one $y$, we get:
\begin{eqnarray*}
\frac{1}{\nu}\left.[\alpha_{\nabla}(A),\alpha_{\nabla}(B)]\right|_{y=0}=\frac{\nu^2}{24}\Lambda^{i_1j_1}\Lambda^{i_2j_2}\Lambda^{i_3j_3}\underline{A}_{i_1i_2i_3}\underline{B}_{j_1j_2j_3} + O(\nu^3).
\end{eqnarray*}
It finishes the proof.
\end{proof}

\subsection{A formal symplectic form on $\E(M,\omega)$}

A \emph{formal symplectic form} on a manifold $F$ is a formal power series of $2$-forms
$$\sigma:=\sigma_0+\nu\sigma_1+\ldots \in \Omega^2(F)[[\nu]],$$
which is closed and starts with a symplectic form $\sigma_0$.\\
We say $\sigma$ is a \emph{formal deformation} of $\sigma_0$.

In the paper \cite{FU}, Foth and Uribe studied a connection on a bundle of quantum spaces over the space $\J$ of almost-complex structures on an integral symplectic manifold. A similar picture is also the motivation of the introduction of formal connections in \cite{AMS}. As the quantum spaces are finite dimensional, Foth--Uribe consider the determinant line bundle with induced connection whose curvature is a $2$-form on the base manifold to study the prequantization of $\J$.

In our formal picture, we do not have quantum spaces. The fibers of our bundle $\V$ is the space $C^{\infty}(M)[[\nu]]$ which should be thought of the endomorphism (operator) space of the quantum space. A connection on the quantum space bundle induces a connection on the endomorphism bundle, from which \cite{AMS} recover a formal connection. In that settings, the curvature on the endomorphism bundle is a commutator with the curvature of the quantum space bundle.

In our formal situation, the curvature $\RR$ is the commutator with a $2$-form $\Rr$ with values in flat sections. We advocate this $\Rr$ plays the role of the curvature of the quantum space bundle. While we do not have determinant line bundle, we can infer its curvature should be the trace of $\Rr$, of course, by trace we mean the $*$-product trace of the formal function $\left.\Rr\right|_{y=0}$ evaluated at tangent elements of $\E(M,\omega)$.

Recall that a \emph{trace} for a star product $\ast$ on a symplectic manifold $(M,\omega)$ is a character
$$ \tr : C_c^\infty(M)[[\nu]] \to \R[\nu^{-1},\nu]]$$
for the $*$-commutator, i.e. $ \tr([F,G]_\ast) = 0,$ for all $F,G\in C_c^\infty(M)[[\nu]]$.
There exists a unique normalised trace for a given star product on $(M,\omega)$. The normalisation is the following. Consider local equivalences $B$ of $*|_{C^\infty(U)[[\nu]]}$ with the Moyal star product $\ast_{\mathrm{Moyal}}$ on $U$ a contractible Darboux chart 
$B : (C^\infty(U)[[\nu]],\ast) \to (C^\infty(U)[[\nu]],\ast_{\mathrm{Moyal}})$
 so that $ BF\ast_{\mathrm{Moyal}} BG = B(F\ast G).$
We ask for the normalised trace to statisfy
\begin{equation*}\label{normalized_tr}
\tr(F) = \frac1{(2\pi\nu)^m} \int_M BF\ \frac{\omega^m}{m!}, \textrm{ for all } F\in C_c^\infty(U)[[\nu]].
\end{equation*}
The trace is written as an $L^2$-pairing with a formal function $\rho\in C^{\infty}(M)[\nu^{-1},\nu]]$, called the trace density.

We denote by \emph{$\tr^{*_{\nabla}}$ the normalised trace} of the Fedosov star product $*_{\nabla}$ and by \emph{$\rho^{\nabla}$ the trace density of  $\tr^{*_{\nabla}}$}.

\begin{theorem}\label{theor:omegatilde}
The formal $2$-form
\begin{equation*}
\widetilde{\Omega}^{\E}_{\nabla}(A,B):=(2\pi)^m.24.\nu^{m-2} \tr^{*_{\nabla}}(\left.\Rr_{\nabla}(A,B)\right|_{y=0}) \textrm{ for } A,B\in T_{\nabla}\E(M,\omega),
\end{equation*}
is a formal symplectic form on $\E(M,\omega)$ deforming $\Omega^{\E}$.\\
Moreover, the action of $\ham(M,\omega)$ on $\E(M,\omega)$ preserves $\widetilde{\Omega}^{\E}$.
\end{theorem}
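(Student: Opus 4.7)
The statement decomposes into three points: (i) $\widetilde{\Omega}^{\E}$ is a well-defined element of $\Omega^{2}(\E(M,\omega))[[\nu]]$ with $\nu^{0}$-component equal to $\Omega^{\E}$; (ii) $\widetilde{\Omega}^{\E}$ is $d$-closed; (iii) the $\ham(M,\omega)$-action preserves $\widetilde{\Omega}^{\E}$. Non-degeneracy of the leading term will be automatic from (i) since $\Omega^{\E}$ is symplectic.

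The leading term is immediate: Theorem~\ref{theor:Ralpha} gives the $\nu^{2}$-part of $\left.\Rr_{\nabla}(A,B)\right|_{y=0}$ as $\frac{1}{24}\Lambda^{i_1j_1}\Lambda^{i_2j_2}\Lambda^{i_3j_3}\underline{A}_{i_1i_2i_3}\underline{B}_{j_1j_2j_3}$, and the normalised trace begins with $(2\pi\nu)^{-m}\int_{M}(\cdot)\,\omega^{m}/m!$. The prefactor $(2\pi)^{m}\cdot 24\cdot\nu^{m-2}$ in the definition of $\widetilde{\Omega}^{\E}$ is rigged to cancel both the $\nu^{2}$ and the $(2\pi\nu)^{-m}$, leaving exactly $\Omega^{\E}_{\nabla}(A,B)$ at order $\nu^{0}$.

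For closedness, the plan is to combine two steps. The first is a Bianchi-type identity for $\Rr$: via Lemma~\ref{lemme:QDF}, on the subbundle of flat Weyl sections, $\D$ acts as $d+\invnu[\alpha,\cdot]$, and from the structure equation $\Rr=d\alpha+\tfrac{1}{2\nu}[\alpha,\alpha]$ together with graded Jacobi one derives $d\Rr=\invnu[\Rr,\alpha]$, hence $d^{\D}\Rr:=d\Rr+\invnu[\alpha,\Rr]=0$; evaluating at $y=0$ yields $d^{\D}\left.\Rr\right|_{y=0}=0$ as a $C^{\infty}(M)[[\nu]]$-valued $3$-form on $\E(M,\omega)$. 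The second ingredient is a trace-compatibility statement: for any section $s$ of $\V$ and any $A\in T_{\nabla}\E(M,\omega)$,
\begin{equation*}
A\bigl(\tr^{*_{\cdot}}(s(\cdot))\bigr)=\tr^{*_{\nabla}}(\D_{A}s).
\end{equation*}
This will follow from Theorem~\ref{theor:smoothisom}: parallel transport $B_{t}$ is a star-product isomorphism $(C^{\infty}(M)[[\nu]],*_{\nabla^{0}})\to(C^{\infty}(M)[[\nu]],*_{\nabla^{t}})$ and, by uniqueness of the normalised trace on a symplectic manifold, obeys $\tr^{*_{\nabla^{t}}}(B_{t}F)=\tr^{*_{\nabla^{0}}}(F)$; differentiating in a parallel extension of $s$ gives the identity. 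Together these two ingredients produce
\begin{equation*}
d\widetilde{\Omega}^{\E}(A,B,C)=(2\pi)^{m}\cdot 24\cdot\nu^{m-2}\,\tr^{*_{\nabla}}\!\bigl(d^{\D}\left.\Rr\right|_{y=0}(A,B,C)\bigr)=0.
\end{equation*}

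For $\ham(M,\omega)$-invariance, the Fedosov data $\overline{\Gamma}^{\nabla}$, $\overline{R}^{\nabla}$, $r^{\nabla}$, $D^{\nabla}$, $Q^{\nabla}$ are natural in $\nabla$, so for $\varphi\in\ham(M,\omega)$ the pullback $\varphi^{*}$ is a star-product isomorphism $(C^{\infty}(M)[[\nu]],*_{\varphi.\nabla})\to(C^{\infty}(M)[[\nu]],*_{\nabla})$ and $\alpha$, $\Rr$, $\left.\Rr\right|_{y=0}$ are all equivariant; the same uniqueness-of-trace argument then forces $\varphi^{*}\widetilde{\Omega}^{\E}=\widetilde{\Omega}^{\E}$. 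The main obstacle throughout is the trace-compatibility $\tr^{*_{\nabla^{t}}}\circ B_{t}=\tr^{*_{\nabla^{0}}}$, namely that the Moyal-type local normalisation of the Fedosov trace is respected by the canonical isomorphisms of Theorem~\ref{theor:smoothisom}, and not merely up to an a priori $\nu$-dependent scalar. Once that is in hand, both closedness and equivariance follow cleanly.
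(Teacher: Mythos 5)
Your proposal is correct and follows the same overall architecture as the paper: identification of the leading term from Theorem \ref{theor:Ralpha} and the trace normalisation, closedness via a Bianchi identity for $\Rr$ combined with a formula for the variation of the trace, and invariance via naturality of the Fedosov data (your equivariance of $\alpha$ is exactly the paper's Lemma \ref{lemme:naturalalpha}, and $\left.\Rr\right|_{y=0}=\left.\invnu[\alpha(A),\alpha(B)]\right|_{y=0}$ because $\left.d\alpha\right|_{y=0}=0$). The one genuine divergence is the trace-compatibility step $A\bigl(\tr^{*_{\cdot}}(s(\cdot))\bigr)=\tr^{*_{\nabla}}(\D_{A}s)$: the paper imports Fedosov's explicit variation formula for the trace density (Lemma \ref{lemme:tracevariation}, quoted from \cite{fed4}) and combines it with Lemma \ref{lemme:QDF}, whereas you derive it abstractly from $\tr^{*_{\nabla^{t}}}\circ B_{t}=\tr^{*_{\nabla^{0}}}$ using uniqueness of the normalised trace and the parallel-transport interpretation of Theorem \ref{theor:smoothisom}. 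Your route is more conceptual and avoids the explicit formula, but it does hinge on verifying that the conjugation by $v_{t}$ induces a differential-operator equivalence of the form $\mathrm{Id}+O(\nu)$, so that composing with local equivalences to Moyal shows $\tr^{*_{\nabla^{t}}}\circ B_{t}$ is again \emph{normalised} (not just a trace, hence not just proportional to $\tr^{*_{\nabla^{0}}}$ by a $\nu$-dependent scalar). You correctly flag this as the crux; it is a standard fact, and indeed the paper itself invokes precisely this invariance of the normalised trace under conjugation by $v_{1s}$ in the proof of Proposition \ref{prop:Atildeholonomie}, so your argument closes. The paper's route buys an explicit integrand (useful later for the moment-map computation); yours buys independence from the precise form of Fedosov's variation formula.
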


\noindent We split the proof into several Lemmas.

\begin{lemma}[Bianchi identity]
For $A,B$ and $C$ be constant vector fields on $\E(M,\omega)$ then
$$\underset{A\,B\,C}{\stackrel{\curvearrowright}{\oplus}} \D_A (\left.\Rr_{\nabla} (B,C)\right|_{y=0})=0.$$
\end{lemma}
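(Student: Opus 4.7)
The plan is to reduce the identity to a computation inside $\Gamma\W$ via the isomorphism $Q^{\nabla}$, then separate the curvature form $\Rr = d\alpha + \invnu[\alpha,\alpha]$ into its two natural pieces and apply $d^2=0$, the graded Leibniz rule, and the Jacobi identity.

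First, I would apply Lemma~\ref{lemme:QDF} to the section $F(\nabla):=\left.\Rr_\nabla(B,C)\right|_{y=0}$ of $\V$. Because $\Rr_\nabla(B,C)$ is $D^\nabla$-flat by Theorem~\ref{theor:Ralpha}, we have $Q^\nabla(F(\nabla))=\Rr_\nabla(B,C)$, and Lemma~\ref{lemme:QDF} yields
\begin{equation*}
Q^\nabla\!\bigl(\D_A\left.\Rr(B,C)\right|_{y=0}\bigr)=A\bigl(\Rr(B,C)\bigr)+\invnu\bigl[\alpha_\nabla(A),\Rr_\nabla(B,C)\bigr].
\end{equation*}
Since $Q^\nabla$ is injective, it will suffice to prove that the cyclic sum of the right-hand side over $(A,B,C)$ vanishes in $\Gamma\W$.

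Next, I would substitute $\Rr(B,C)=d\alpha(B,C)+\invnu[\alpha(B),\alpha(C)]$ and split the cyclic sum into three contributions:
\begin{equation*}
\sum_{\text{cyc}} A\bigl(d\alpha(B,C)\bigr),\qquad \invnu\!\sum_{\text{cyc}}A\bigl([\alpha(B),\alpha(C)]\bigr),\qquad \invnu\!\sum_{\text{cyc}}\bigl[\alpha(A),\Rr(B,C)\bigr].
\end{equation*}
Since $A,B,C$ are constant (in particular all brackets of the vector fields vanish), the first sum is exactly $d(d\alpha)(A,B,C)$, which is $0$. For the second sum I would apply the Leibniz rule to each term $A[\alpha(B),\alpha(C)]=[A\alpha(B),\alpha(C)]+[\alpha(B),A\alpha(C)]$, then regroup the six resulting terms by which $\alpha(\cdot)$ is not differentiated; using $A\alpha(B)-B\alpha(A)=d\alpha(A,B)$ (valid for constant fields) and antisymmetry of the bracket, this collapses to $\invnu\sum_{\text{cyc}}[d\alpha(B,C),\alpha(A)]$. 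Combined with the $d\alpha$-piece of the third sum, $\invnu\sum_{\text{cyc}}[\alpha(A),d\alpha(B,C)]$, these cancel by antisymmetry of $[\cdot,\cdot]$.

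The only remaining piece is $\frac{1}{\nu^2}\sum_{\text{cyc}}[\alpha(A),[\alpha(B),\alpha(C)]]$, which vanishes by the Jacobi identity in the Lie algebra $(\Gamma\W,[\cdot,\cdot])$. Summing up, $\sum_{\text{cyc}}Q^\nabla(\D_A\left.\Rr(B,C)\right|_{y=0})=0$, and injectivity of $Q^\nabla$ gives the claimed Bianchi identity. The only genuinely delicate point is the bookkeeping in the second step: six differentiated-bracket terms must be regrouped so that each pair assembles into a factor $d\alpha$; this is the routine but sign-sensitive part of the argument.
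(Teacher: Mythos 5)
Your proof is correct and follows essentially the same route as the paper: reduce to $\Gamma\W$ via Lemma \ref{lemme:QDF} and flatness of $\Rr$, split $\Rr=d\alpha+\invnu[\alpha,\alpha]$, and kill the pieces with $d^2=0$, Leibniz, antisymmetry and Jacobi. The only difference is that you carry out explicitly the regrouping of the six differentiated-bracket terms that the paper dismisses as ``direct computations,'' and your bookkeeping there checks out.
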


\begin{proof}
As $\Rr^{\nabla}(B,C)$ is a flat section, we use Lemma \ref{lemme:QDF} to get
$$Q^{\nabla}(\D_A  (\left.\Rr^{\nabla} (B,C)\right|_{y=0}))=\left( \ddto \Rr_{\nabla+tA}(B,C)+ \invnu[\alpha(A),\Rr^{\nabla}(B,C)]\right).$$
Writing $\Rr$ in terms of $\alpha$, we obtain
\begin{eqnarray*}
\underset{A\,B\,C}{\stackrel{\curvearrowright}{\oplus}} Q^{\nabla}(\D_A  (\left.\Rr^{\nabla} (B,C)\right|_{y=0})) & = & d\left(d\alpha+\invnu[\alpha(\cdot),\alpha(\cdot)]\right)(A,B,C) \\
& & +\underset{A\,B\,C}{\stackrel{\curvearrowright}{\oplus}}\invnu[\alpha(A),d\alpha(B,C)]
  + \underset{A\,B\,C}{\stackrel{\curvearrowright}{\oplus}}\frac{1}{\nu^2}[\alpha(A),[\alpha(B),\alpha(C)]].
\end{eqnarray*}
Because $d^2=0$ and Jacobi for the $\circ$-commutator, it simplifies to
\begin{eqnarray*}
\underset{A\,B\,C}{\stackrel{\curvearrowright}{\oplus}} Q^{\nabla}(\D_A  (\left.\Rr^{\nabla} (B,C)\right|_{y=0})) & = & \invnu d[\alpha(\cdot),\alpha(\cdot)](A,B,C) +\underset{A\,B\,C}{\stackrel{\curvearrowright}{\oplus}}\invnu[\alpha(A),d\alpha(B,C)].
\end{eqnarray*}
Direct computations show that the above RHS vanishes which concludes the proof.
\end{proof}

\noindent The next lemma recall the variation of the trace map from \cite{fed4}.

\begin{lemma} \label{lemme:tracevariation}
Let $t\mapsto \nabla^t$ be a smooth path of symplectic connections. Then
$$ \ddto \tr^{*_{\nabla^t}}(F)=\tr^{*_{\nabla}}\left(\left.\invnu[\alpha_{\nabla^0}(\ddto \nabla^t),Q^{\nabla}(F)]\right|_{y=0}\right) $$
\end{lemma}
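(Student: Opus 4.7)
The plan is to lift the path $t\mapsto \nabla^t$ through the canonical Fedosov isomorphism from Theorem \ref{theor:smoothisom}, observe that it intertwines the normalized traces, and then differentiate the intertwining relation at $t=0$.

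First I would descend $B_t$ to an algebra isomorphism on functions
$\tilde{B}_t : (C^{\infty}(M)[[\nu]], *_{\nabla^0}) \to (C^{\infty}(M)[[\nu]], *_{\nabla^t})$
by setting $\tilde{B}_t F := \left. v_t \circ Q^{\nabla^0}(F) \circ v_t^{-1}\right|_{y=0}$. Since $B_t$ maps $D^{\nabla^0}$-flat sections to $D^{\nabla^t}$-flat sections (Theorem \ref{theor:smoothisom}) and $Q^{\nabla^t}$ inverts $\sigma$ on $D^{\nabla^t}$-flat sections, one gets the identity $v_t\circ Q^{\nabla^0}(F) \circ v_t^{-1} = Q^{\nabla^t}(\tilde{B}_t F)$. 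Combined with $B_t$ being a $\circ$-algebra map between flat sections, this shows $\tilde{B}_t$ is an algebra isomorphism between the two star product algebras.

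Next I would establish the intertwining $\tr^{*_{\nabla^t}} \circ \tilde{B}_t = \tr^{*_{\nabla^0}}$. The pullback $F \mapsto \tr^{*_{\nabla^t}}(\tilde{B}_t F)$ vanishes on $*_{\nabla^0}$-commutators because $\tilde{B}_t$ intertwines them, so it is a trace for $*_{\nabla^0}$. Uniqueness of the normalized trace up to a scalar then gives $\tr^{*_{\nabla^t}}\circ \tilde{B}_t = c_t \cdot \tr^{*_{\nabla^0}}$ for some $c_t \in \R[\nu^{-1},\nu]]$. To pin down $c_t = 1$, I would evaluate at $F = 1$: algebra isomorphisms preserve the unit, so $\tilde{B}_t 1 = 1$, and for the Fedosov star products with trivial $\Omega = 0$ the trace density is a constant, giving $\tr^{*_\nabla}(1) = \mathrm{vol}(M)/(2\pi\nu)^m$ independently of $\nabla$.

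Finally I would differentiate the relation $\tr^{*_{\nabla^t}}(\tilde{B}_t F) = \tr^{*_{\nabla^0}}(F)$ at $t=0$ with $F$ held fixed, obtaining $\ddto \tr^{*_{\nabla^t}}(F) = -\tr^{*_{\nabla^0}}(\ddto \tilde{B}_t F)$. Using $v_0 = 1$ together with $\ddto v_t = \invnu h_0$ from \eqref{eq:vt}, one reads off $\ddto \tilde{B}_t F = \left.\invnu [h_0, Q^{\nabla^0}(F)]\right|_{y=0}$, and Equations \eqref{eq:ht} and \eqref{eq:defalpha} yield $h_0 = -\alpha_{\nabla^0}(\ddto \nabla^t)$. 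Substituting gives the formula. The main obstacle is the normalization step, that is, showing $c_t = 1$; the surrounding computation is a direct differentiation of the constructions already in place.
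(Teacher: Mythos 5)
Your strategy is genuinely different from the paper's: the paper proves this lemma in one line by quoting Fedosov's variation formula for the trace from \cite{fed4} and observing that the term appearing there is exactly $\alpha_{\nabla^0}(\ddto\nabla^t)$ by Equation \eqref{eq:defalpha}. Your route instead derives the formula from Theorem \ref{theor:smoothisom} and the uniqueness of traces, and most of it is correct: $\tilde B_t$ is indeed an algebra isomorphism, the pulled-back functional is a trace for $*_{\nabla^0}$ hence a scalar multiple $c_t$ of the normalised one, and the differentiation at $t=0$ correctly produces $\ddto\tilde B_tF=\left.\invnu[h_0,Q^{\nabla^0}(F)]\right|_{y=0}$ with $h_0=-\alpha_{\nabla^0}(\ddto\nabla^t)$.

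The gap is in the step you yourself flag as the main obstacle, and the justification you give for it is false. You claim that for Fedosov star products with $\Omega=0$ the trace density is constant. A constant trace density is precisely the definition of a \emph{closed} star product, and the entire point of this paper (the corollary to Theorem \ref{theor:formalmomentmap}, and the non-triviality of $\F$ on the Ono--Sano--Yotsutani example invoked for Corollary \ref{cor:futaki}) is that $\rho^{\nabla}$ is generically \emph{not} constant. What you actually need is only the weaker fact that $\tr^{*_{\nabla}}(1)$ is independent of $\nabla$; but this does not follow from anything you have established --- it is exactly the $F=1$ case of the lemma you are proving (so invoking it would be circular), and in general it is a consequence of the algebraic index theorem. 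The error is not harmless: if $c_t$ were non-constant, differentiating $\tr^{*_{\nabla^t}}(\tilde B_tF)=c_t\,\tr^{*_{\nabla^0}}(F)$ would leave an extra term $\dot c_0\,\tr^{*_{\nabla^0}}(F)$ in your final formula. A non-circular way to close the gap is to verify $c_t=1$ via Karabegov's normalisation condition recalled in the paper: $\tilde B_t$ is a series of differential operators (hence local), so if $B$ is a local equivalence of $*_{\nabla^t}$ with the Moyal product on a Darboux chart $U$, then $B\circ\tilde B_t$ is a local equivalence of $*_{\nabla^0}$ with Moyal on $U$, and the normalisation of $\tr^{*_{\nabla^t}}$ forces $\tr^{*_{\nabla^t}}\circ\tilde B_t$ to be the normalised trace of $*_{\nabla^0}$.
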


\begin{proof}
The precise formula from \cite{fed4} is $$\ddto \tr^{*_{\nabla^t}}(F)=\tr^{*_{\nabla^0}}\left(\invnu[\left.(D^{\nabla})^{-1}\left( \overline{\ddto \nabla^t}+\ddto r^{\nabla^t}\right),Q^{\nabla}(F)]\right|_{y=0}\right).$$
By definition of $\alpha$, it gives Lemma \ref{lemme:tracevariation}
\end{proof}

\begin{lemma}\label{lemme:closedtildeomega}
The formal $2$-form $\widetilde{\Omega}^{\E}$ is closed.
\end{lemma}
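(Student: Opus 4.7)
The plan is to push the Bianchi identity of the preceding lemma through the star product trace. Since $\E(M,\omega)$ is an affine space, I will test closedness on triples of constant vector fields $A,B,C$, for which $[A,B]=[A,C]=[B,C]=0$ and the usual formula for the exterior derivative collapses to
\[d\widetilde{\Omega}^{\E}(A,B,C) = \underset{A\,B\,C}{\stackrel{\curvearrowright}{\oplus}} A\!\left(\widetilde{\Omega}^{\E}(B,C)\right).\]
The overall factor $(2\pi)^m\cdot 24\cdot\nu^{m-2}$ is irrelevant for closedness, so the core task is to evaluate the variation $A\!\left(\tr^{*_{\cdot}}(\left.\Rr_{\cdot}(B,C)\right|_{y=0})\right)$ at $\nabla$.

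I will split this variation by Leibniz into a term where the star product varies and a term where the section varies. The first term, $\ddto \tr^{*_{\nabla+tA}}(\left.\Rr_{\nabla}(B,C)\right|_{y=0})$, is exactly the content of Lemma \ref{lemme:tracevariation} and returns $\tr^{*_{\nabla}}(\beta_{\nabla}(A)(\left.\Rr_{\nabla}(B,C)\right|_{y=0}))$. The second term is $\tr^{*_{\nabla}}(A(\left.\Rr_{\cdot}(B,C)\right|_{y=0}))$, where only the section is differentiated. Adding the two and using $\D = d+\beta$ collapses the total to
\[A\!\left(\widetilde{\Omega}^{\E}(B,C)\right) = (2\pi)^m\cdot 24\cdot\nu^{m-2}\;\tr^{*_{\nabla}}\!\left(\D_A\!\left(\left.\Rr(B,C)\right|_{y=0}\right)\right).\]

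Finally I will sum cyclically over $(A,B,C)$ and pull the cyclic sum inside the trace by linearity. The Bianchi identity of the preceding lemma then makes the argument of $\tr^{*_{\nabla}}$ vanish identically, hence $d\widetilde{\Omega}^{\E} = 0$. I do not expect a serious obstacle: everything reduces to already-established formulas, and the only delicate point is the Leibniz split, where one must carefully keep track of which factor carries the $t$-dependence so that Lemma \ref{lemme:tracevariation} is applied to the right piece.
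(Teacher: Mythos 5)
Your proposal is correct and follows essentially the same route as the paper: test closedness on constant vector fields, split the variation of $\tr^{*_{\cdot}}(\left.\Rr_{\cdot}(B,C)\right|_{y=0})$ via Leibniz into the trace-variation piece (Lemma \ref{lemme:tracevariation}) and the section-variation piece, recombine them into $\tr^{*_{\nabla}}(\D_A(\left.\Rr_{\nabla}(B,C)\right|_{y=0}))$, and conclude by the Bianchi identity. No substantive difference from the paper's argument.
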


\begin{proof}
It suffices to work with constant vector fields $A,B$ and $C$ on $\E(M,\omega)$ and to check $d\widetilde{\Omega}^{\E}(A,B,C)=0$, that is:
\begin{equation} \label{eq:cyclicsum}
\underset{A\,B\,C}{\stackrel{\curvearrowright}{\oplus}} A(\widetilde{\Omega}^{\E}(B,C))=0.
\end{equation}

Well, using the Lemma \ref{lemme:tracevariation}, we have (ommiting the constant multiple $(2\pi)^m.24.\nu^{m-2}$ for brevity) :
\begin{eqnarray*}
A(\widetilde{\Omega}^{\E}(B,C)) & = &A(\tr^{*_{\nabla}}(\Rr_{\nabla}(A,B)))\\
   & = &\tr^{*_{\nabla}}\left(\left.\invnu[\alpha_{\nabla}(A),Q^{\nabla}(F)]\right|_{y=0}\right)  
 + \tr^{*_{\nabla}}(\ddto \left.\Rr_{\nabla+tA}(B,C)\right|_{y=0})\\
  & = & \tr^{*_{\nabla}}(\D_A (\left.\Rr_{\nabla} (B,C)\right|_{y=0}))
\end{eqnarray*}
By Bianchi identity, the cyclic sum \eqref{eq:cyclicsum} vanishes which finishes the proof.
\end{proof}

\begin{lemma}\label{lemme:invtildeomega}
The formal $2$-form $\widetilde{\Omega}^{\E}$ is invariant under the natural action of $\ham(M,\omega)$ on $\E(M,\omega)$.
\end{lemma}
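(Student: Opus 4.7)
The plan is to exploit the naturality of the Fedosov construction under symplectomorphisms. For $\varphi\in\ham(M,\omega)$ and any Weyl section $a\in \Gamma\W$, let $\varphi_{\sharp}a$ denote the natural pushforward, which is defined fiberwise using the differential of $\varphi$ and intertwines the $\circ$-products since $\varphi^{*}\omega=\omega$. I would first check that the Fedosov data for $\varphi.\nabla$ is obtained from that of $\nabla$ by $\varphi_{\sharp}$: one has $\overline{\Gamma}^{\varphi.\nabla}=\varphi_{\sharp}\overline{\Gamma}^{\nabla}$ (this is just the transformation of Christoffel symbols under a diffeomorphism combined with the $\omega$-equivariance), hence $\partial^{\varphi.\nabla}=\varphi_{\sharp}\circ\partial^{\nabla}\circ\varphi_{\sharp}^{-1}$. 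By uniqueness of the solution of \eqref{eq:req} with $\Omega=0$ and $\delta^{-1}r=0$ (noting that $\delta$ is itself natural), I get $r^{\varphi.\nabla}=\varphi_{\sharp}r^{\nabla}$, and therefore $D^{\varphi.\nabla}=\varphi_{\sharp}\circ D^{\nabla}\circ\varphi_{\sharp}^{-1}$ and $Q^{\varphi.\nabla}\circ\varphi_{\sharp}=\varphi_{\sharp}\circ Q^{\nabla}$.

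Next I would upgrade these identities to the formal connection data. Writing $\varphi.A\in T_{\varphi.\nabla}\E(M,\omega)$ for the tangent lift of the $\ham$-action (so that $\overline{\varphi.A}=\varphi_{\sharp}\overline{A}$), the equivariance of the Fedosov solution $r$ gives $\ddto r^{\varphi.\nabla+t(\varphi.A)}=\varphi_{\sharp}\ddto r^{\nabla+tA}$, whence from Definition \ref{def:formalconn},
\begin{equation*}
\alpha_{\varphi.\nabla}(\varphi.A)=\varphi_{\sharp}\,\alpha_{\nabla}(A),
\end{equation*}
and Theorem \ref{theor:Ralpha} then yields $\Rr_{\varphi.\nabla}(\varphi.A,\varphi.B)=\varphi_{\sharp}\Rr_{\nabla}(A,B)$. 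Evaluating at $y=0$ (which commutes with $\varphi_{\sharp}$ up to pullback of functions on $M$) gives
\begin{equation*}
\left.\Rr_{\varphi.\nabla}(\varphi.A,\varphi.B)\right|_{y=0}=(\varphi^{-1})^{*}\left.\Rr_{\nabla}(A,B)\right|_{y=0}.
\end{equation*}

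I would then verify that the normalized trace is equivariant: $\tr^{*_{\varphi.\nabla}}(F)=\tr^{*_{\nabla}}(\varphi^{*}F)$. The map $F\mapsto\tr^{*_{\nabla}}(\varphi^{*}F)$ is a trace for $*_{\varphi.\nabla}$ because $\varphi_{\sharp}$ intertwines the star products (as it intertwines the whole Weyl picture by the first step), so it suffices to check the normalization condition. This follows because $\varphi$ is a symplectomorphism, hence preserves $\frac{\omega^{m}}{m!}$, and sends a local equivalence $B$ of $*_{\nabla}$ with $*_{\mathrm{Moyal}}$ over a Darboux chart to a local equivalence of $*_{\varphi.\nabla}$ with $*_{\mathrm{Moyal}}$ over the image chart. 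Combining this with the previous step,
\begin{equation*}
\widetilde{\Omega}^{\E}_{\varphi.\nabla}(\varphi.A,\varphi.B)=(2\pi)^{m}\cdot 24\cdot\nu^{m-2}\,\tr^{*_{\nabla}}\!\left(\varphi^{*}(\varphi^{-1})^{*}\left.\Rr_{\nabla}(A,B)\right|_{y=0}\right)=\widetilde{\Omega}^{\E}_{\nabla}(A,B),
\end{equation*}
which is precisely the invariance claimed.

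The main obstacle is book-keeping in the first paragraph: the equivariance of the implicitly-defined element $r^{\nabla}$ and of the quantization map $Q^{\nabla}$ must be pinned down carefully, since everything downstream (equivariance of $\alpha$, $\Rr$, and of the trace) rests on it. Once those naturality identities are in place, the rest of the proof is essentially formal.
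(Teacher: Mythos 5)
Your proposal is correct and follows essentially the same route as the paper: establish the naturality of the Fedosov data ($r$, $D$, $Q$) under the $\ham(M,\omega)$-action, deduce the equivariance of the connection one-form $\alpha$ by the uniqueness of the solution of its defining equation (this is exactly the paper's Lemma \ref{lemme:naturalalpha}, stated there in pullback rather than pushforward notation), then transfer this to $\Rr|_{y=0}$ and invoke the equivariance of the normalised trace. The only cosmetic difference is that you prove equivariance of the full curvature form $\Rr$ (including $d\alpha$), whereas the paper shortcuts by using that $d\alpha|_{y=0}=0$ so only the commutator term matters.
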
 

Before we proceed to the proof, let us recall some known facts about the action of Hamiltonian diffeomorphisms on Weyl algebra sections. Let $\varphi\in \ham(M,\omega)$, it acts by pull-back on $\Gamma\W\otimes\Lambda(M)$:
$$\varphi^* (a\otimes \alpha):=\varphi^*a\otimes \varphi^*\alpha,$$
for $a\in \Gamma\W$ seen as a formal power series of symmetric tensors on $M$ and $\alpha\in \Omega(M)$ (it is actually an anti-action). All the ingredients of the Fedosov construction behaves well with respect to the pullback, in the sequel we will refer to the equations hereafter as the \emph{naturality} of Fedosov construction. For details we refer to the book \cite{fed}.
The pull-back preserves the $\circ$-product
$$\varphi^* (a\circ b)= \varphi^* a\circ \varphi^*b  \textrm{ for all } a,b \in \Gamma\W\otimes\Lambda(M).$$
The action on a symplectic connection $\nabla$ translates into a transformation of the covariant derivative $\partial^{\nabla}$:
$$\partial^{\varphi^{-1}\cdot \nabla} =\varphi^*\partial^{\nabla} (\varphi^{-1})^*.$$
We omit the symbol for composition as it can be misleading with the $\W$-algebra product. Similarly, we have
$$\delta=\varphi^*\delta(\varphi^{-1})^*.$$
The above leads to 
\begin{eqnarray*}
D^{\varphi^{-1}\cdot \nabla} & = & \varphi^*D(\varphi^{-1})^*\\
 & = & \partial^{\varphi^{-1}\cdot \nabla}-\delta + \invnu[\varphi^*r^{\nabla},\cdot],
\end{eqnarray*}
so that
$$r^{\varphi^{-1}\cdot \nabla}=\varphi^*r^{\nabla}$$
It means that if $a\in \Gamma\W_{D^{\nabla}}$ is a $D^{\nabla}$-flat section with symbol $a_0$, then $\varphi^*a\in \Gamma\W_{D^{\varphi^{-1}\cdot \nabla}}$ is a $D^{\varphi^{-1}\cdot \nabla}$-flat section with symbol $\varphi^*a_0$. So that, for all $F\in C^{\infty}(M)[[\nu]]$, we have
$$\varphi^*Q^{\nabla}(F)=Q^{\varphi^{-1}\cdot \nabla}(\varphi^*F).$$
The pull-back also induces an isomorphism of Fedosov star products
$$\varphi^*(F*_{\nabla}G)=(\varphi^*F)*_{\varphi^{-1}\cdot \nabla}(\varphi^*G).$$
Consequently, the trace density transforms as $\varphi^*\rho^{\nabla}=\rho^{\varphi^{-1}\cdot \nabla}$, so that
$$\tr^{*_{\varphi^{-1}\cdot \nabla}}(F)=\tr^{*_{\nabla}}((\varphi^{-1})^*F) \textrm{ for all } F\in C^{\infty}(M)[[\nu]].$$
All the above remains valid if $\varphi$ is a symplectic diffeomorphism, but this paper focuses on Hamiltonian diffeomorphisms.

\begin{lemma} \label{lemme:naturalalpha}
Let $\varphi$ be a Hamiltonian diffeomorphism, $\nabla$ a symplectic connection and $A\in T_{\nabla}\E(M,\omega)$, we have:
$$\varphi^*(\alpha_{\nabla}(A))=\alpha_{\varphi^{-1}\cdot \nabla}(\varphi^*A),$$
for $\varphi^*A$ the usual pullback on $\Gamma \mathrm{End(TM)}\otimes \Lambda^1M$.
\end{lemma}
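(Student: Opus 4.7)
The plan is to unwind both sides of the claimed equality using the definition of $\alpha$ and then verify that each ingredient of this definition (the element $\overline{A}$, the $t$-derivative of $r^{\nabla+tA}$, and the operator $(D^{\nabla})^{-1}$) is natural with respect to $\varphi^*$, by invoking the naturality properties of the Fedosov construction recalled just before the lemma.

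\textbf{Step 1: identify the path.} I would first observe that since the action $\varphi\cdot\nabla$ is natural, differentiating the path $t\mapsto \varphi^{-1}\cdot(\nabla+tA)$ in $\E(M,\omega)$ at $t=0$ yields the tangent vector $\varphi^*A$ at $\varphi^{-1}\cdot\nabla$. Concretely, $(\varphi^{-1}\cdot(\nabla+tA))_XY=(\varphi^{-1}\cdot\nabla)_XY+t\,\varphi^{-1}_*\bigl(A_{\varphi_*X}\varphi_*Y\bigr)$, and the last term is $t\,(\varphi^*A)_XY$. So the relevant path on the pulled back side is $s\mapsto \varphi^{-1}\cdot\nabla+s\,\varphi^*A$, with tangent $\varphi^*A$.

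\textbf{Step 2: naturality of $\overline{A}$ and of $\partial_t r$.} Next I would check that the bracket piece $\overline{A}+\ddto r^{\nabla+tA}$ behaves naturally. From $\partial^{\varphi^{-1}\cdot\nabla}=\varphi^*\partial^{\nabla}(\varphi^{-1})^*$ one extracts $\overline{\Gamma}^{\varphi^{-1}\cdot\nabla}=\varphi^*\overline{\Gamma}^{\nabla}$; differentiating at $t=0$ along the two matched paths from Step 1 gives
\begin{equation*}
\overline{\varphi^*A}=\ddto\overline{\Gamma}^{\varphi^{-1}\cdot\nabla+t\varphi^*A}=\varphi^*\overline{A}.
\end{equation*}
Similarly, from $r^{\varphi^{-1}\cdot\nabla}=\varphi^*r^{\nabla}$ applied to the entire path, I get
\begin{equation*}
\ddso r^{\varphi^{-1}\cdot\nabla+s\varphi^*A}=\varphi^*\Bigl(\ddto r^{\nabla+tA}\Bigr).
\end{equation*}

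\textbf{Step 3: naturality of $(D^{\nabla})^{-1}$.} By Lemma \ref{lemme:Dinverse}, $(D^{\nabla})^{-1}=-Q^{\nabla}\circ \delta^{-1}$. The operator $\delta$ is coordinate-independent (it can be written intrinsically as the antisymmetrisation that turns a symmetric $y$-slot into an antisymmetric $dx$-slot; equivalently, the identity $\delta=-\invnu[\omega_{ij}y^idx^j,\cdot]$ shows $\delta$ is built from $\omega$, which $\varphi$ preserves). Hence $\varphi^*\delta=\delta\,\varphi^*$, and passing to the inverse on the subspace where it is defined gives $\varphi^*\delta^{-1}=\delta^{-1}\varphi^*$. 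Combined with the Fedosov naturality $\varphi^*Q^{\nabla}=Q^{\varphi^{-1}\cdot\nabla}\varphi^*$, I conclude
\begin{equation*}
\varphi^*\circ (D^{\nabla})^{-1}=(D^{\varphi^{-1}\cdot\nabla})^{-1}\circ \varphi^*.
\end{equation*}

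\textbf{Step 4: assemble.} Applying Step 3 to the element produced in Step 2, I obtain
\begin{equation*}
\alpha_{\varphi^{-1}\cdot\nabla}(\varphi^*A)=(D^{\varphi^{-1}\cdot\nabla})^{-1}\Bigl(\varphi^*\overline{A}+\varphi^*\ddto r^{\nabla+tA}\Bigr)=\varphi^*(D^{\nabla})^{-1}\Bigl(\overline{A}+\ddto r^{\nabla+tA}\Bigr)=\varphi^*\alpha_{\nabla}(A),
\end{equation*}
which is exactly the claim. The only real friction is the argument in Step 3 that $\delta$ is natural under symplectomorphisms; once this is granted, everything else is a clean application of the naturality package for Fedosov's construction. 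Finally, note that the Hamiltonian hypothesis on $\varphi$ is not used in the proof beyond $\varphi\in\symp(M,\omega)$, which is consistent with the framework used earlier in this section.
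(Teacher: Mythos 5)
Your proof is correct and follows essentially the same route as the paper: both unwind the definition $\alpha_{\nabla}(A)=(D^{\nabla})^{-1}\bigl(\overline{A}+\ddto r^{\nabla+tA}\bigr)$ and push $\varphi^*$ through each ingredient using the naturality of the Fedosov construction. The only cosmetic difference is at the last step, where the paper invokes the \emph{uniqueness} clause of Lemma \ref{lemme:Dinverse} (checking that $\varphi^*\alpha_{\nabla}(A)$ solves the characterizing system for $\alpha_{\varphi^{-1}\cdot\nabla}(\varphi^*A)$), while you instead prove the operator identity $\varphi^*(D^{\nabla})^{-1}=(D^{\varphi^{-1}\cdot\nabla})^{-1}\varphi^*$ from the explicit formula $-Q\circ\delta^{-1}$ — both are immediate consequences of the same lemma and naturality package.
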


\begin{proof}
Going back to Equation \eqref{eq:defalpha} defining $\alpha$ we have:
$$\alpha_{\varphi^{-1}\cdot \nabla}(\varphi^*A)=(D^{\varphi^{-1}\cdot \nabla})^{-1}(\varphi^*\overline{A}+\ddto r^{\varphi^{-1}\cdot  \nabla+t\varphi^*A}),$$
after observing that $\overline{\varphi^*A}=\varphi^*\overline{A}$. One also remark that $\ddto r^{\varphi^{-1}\cdot  \nabla+t\varphi^*A}=\varphi^*\ddto r^{ \nabla+tA}$, by naturality.
It means that $\alpha_{\varphi^{-1}\cdot \nabla}(\varphi^*A)$ is the unique solution to
\begin{displaymath}
\textrm{(i)}\ \left\{\begin{array}{ccl}
D^{\varphi^{-1}\cdot \nabla}\alpha_{\varphi^{-1}\cdot \nabla}(\varphi^*A) & = & \varphi^*(\overline{A}+\ddto r^{ \nabla+tA})\\
\left.\alpha_{\varphi^{-1}\cdot \nabla}(\varphi^*A)\right|_{y=0} & = & 0.
\end{array}\right.
\end{displaymath}
On the other hand,  $\alpha_{ \nabla}(A)$ is the unique solution to 
\begin{displaymath}
\left\{\begin{array}{ccl}
D^{\nabla}\alpha_{ \nabla}(A) & = & (\overline{A}+\ddto r^{ \nabla+tA})\\
\left.\alpha_{\nabla}(A)\right|_{y=0} & = & 0.
\end{array}\right.
\end{displaymath}
By naturality, $D^{\varphi^{-1}\cdot \nabla}= \varphi^*D(\varphi^{-1})^*$, also $\left.\varphi^*(\alpha_{\nabla}(A))\right|_{y=0} =0$, so that $\varphi^*(\alpha_{\nabla}(A))$ is also the solution of (i), which means
$$\varphi^*(\alpha_{\nabla}(A))=\alpha_{\varphi^{-1}\cdot \nabla}(\varphi^*(A)).$$
The equality is proved.
\end{proof}

\begin{proof}[Proof of Lemma \ref{lemme:invtildeomega}]
Let $\varphi$ be a Hamiltonian diffeomorphism, we want to prove that
$$\widetilde{\Omega}^{\E}_{\varphi^{-1}\cdot \nabla}(\varphi^*A,\varphi^*B) = \widetilde{\Omega}^{\E}_{\nabla}(A, B)$$
for $A,B\in T_{\nabla}\E(M,\omega)$ and the pullback $\varphi^*A= \ddto\varphi^{-1}\cdot( \nabla+tA)$ is equal to the differential of the action by $\varphi^{-1}$ on $\E(M,\omega)$. So we want to show
$$\tr^{*_{\varphi^{-1}\cdot \nabla}}\left(\left.\Rr_{\varphi^{-1}\cdot \nabla}(\varphi^*A,\varphi^*B)\right|_{y=0}\right) = \tr^{*_{\nabla}}\left(\left.\Rr_{\nabla}(A, B)\right|_{y=0}\right).$$

Let us have a look at the LHS. 
$$\left.\Rr_{\varphi^{-1}\cdot \nabla}(\varphi^*A,\varphi^*B)\right|_{y=0}=\left.\invnu[\alpha_{\varphi^{-1}\cdot \nabla}(\varphi^*A),\alpha_{\varphi^{-1}\cdot \nabla}(\varphi^*B)]\right|_{y=0}.$$
From Lemma \ref{lemme:naturalalpha}, we know
$$\varphi^*(\alpha_{\nabla}(A))=\alpha_{\varphi^{-1}\cdot \nabla}(\varphi^*A).$$
So that, by naturality
\begin{eqnarray*}
\tr^{*_{\varphi^{-1}\cdot \nabla}}\left(\left.\Rr_{\varphi^{-1}\cdot \nabla}(\varphi^*A,\varphi^*B)\right|_{y=0}\right) & = & \tr^{*_{\varphi^{-1}\cdot \nabla}}\left(\varphi^*\left.\invnu[\alpha_{ \nabla}(A),\alpha_{\nabla}((B))]\right|_{y=0}\right) \\
& = & \tr^{*_{ \nabla}}\left(\left.\invnu[\alpha_{ \nabla}(A),\alpha_{\nabla}((B))]\right|_{y=0}\right).
\end{eqnarray*}
It concludes the proof since $\left.\Rr_{\nabla}(A, B)\right|_{y=0}=\left.\invnu[\alpha_{ \nabla}(A),\alpha_{\nabla}((B))]\right|_{y=0}$.
\end{proof}

\begin{proof}[Proof of Theorem \ref{theor:omegatilde}]
The form $\widetilde{\Omega}^{\E}$ is closed by Lemma \ref{lemme:closedtildeomega}. To show it is a formal symplectic form, it is now enough to show it starts with $\Omega^{\E}$ at order zero in $\nu$. Well, since the trace $\tr^{*_{\nabla}}(F)=\frac{1}{(2\pi\nu)^m} \int_M F \dvol+O(\nu)$, from the computation of the first order term of $\left.\Rr_{\nabla}(\cdot,\cdot)\right|_{y=0}$ in Theorem \ref{theor:Ralpha}, we have
\begin{eqnarray*}
\widetilde{\Omega}^{\E}_{\nabla}(A,B) & = & \int_M \Lambda^{i_1j_1}\Lambda^{i_2j_2}\Lambda^{i_3j_3}\underline{A}_{i_1i_2i_3}\underline{B}_{j_1j_2j_3}\dvol+O(\nu) \\
 & = & \Omega^{\E}(A,B)+O(\nu).
\end{eqnarray*}
for $A,B\in T_{\nabla}\E(M,\omega)$.

The invariance with respect to the action of Hamiltonian diffeomorphisms is the content of Lemma \ref{lemme:invtildeomega}.
\end{proof}

\subsection{A formal moment map}

In \cite{cagutt}, a moment map for the action of $\ham(M,\omega)$ on $(\E(M,\omega),\Omega^{\E})$ is obtained and in \cite{LLF} this moment map is interpreted as the first term of the trace denisity of $*_{\nabla}$. We go one step further in this direction by showing the full star product trace $\tr^{*_{\nabla}}$ may be interpreted as a ``formal moment map'' for the action of $\ham(M,\omega)$ on $(\E(M\omega),\widetilde{\Omega}^{\E})$.

Denote by $C^{\infty}_0(M)$ the space of smooth sections with zero mean. The \emph{Cahen--Gutt moment map} is the map
$$\mu:\E(M,\omega)\rightarrow C^{\infty}(M):\nabla \mapsto \mu(\nabla)$$
defined by 
$$\mu(\nabla)=(\nabla^2_{p q} \textrm{Ric})^{pq}+ \frac{1}{4}R_{p_1p_2p_3p_4}R^{p_1p_2p_3p_4}-\frac{1}{2}\textrm{Ric}_{p_1p_2}\textrm{Ric}^{p_1p_2},$$
where $R$, resp. $\textrm{Ric}$, being the curvature, resp. Ricci curvature, of the given $\nabla$, we raise the indices with the matrix $\Lambda^{kl}$. The function $\mu(\nabla)$ can be seen as an element of the dual of $C^{\infty}_0(M)$ by using the $L^2$-pairing.

\begin{theorem}\label{theor:formalmomentmap}
The map 
$$\widetilde{\mu}: \E(M,\omega) \rightarrow L(C^{\infty}_0(M), \R[[\nu]]):\nabla\mapsto \left[\widetilde{\mu}(\nabla):H\mapsto -(2\pi)^m.24.\nu^{m-2}\tr^{*_{\nabla}}(H)\right]$$
is an equivariant formal moment map for the action of $\ham(M,\omega)$ on $(\E(M\omega),\widetilde{\Omega}^{\E})$, in the sense that
\begin{eqnarray}\label{eq:formalmomentmap}
\ddto \widetilde{\mu}(\nabla+tA)(H) & = &\widetilde{\Omega}^{\E}_{\nabla}(\Lr_{X_H}\nabla,A), \ \textrm{ (formal moment map),} \label{eq:formalmomentmap}\\
\widetilde{\mu}(\varphi\cdot\nabla)(H) & = & \widetilde{\mu}(\nabla)(\varphi^*H), \ \textrm{ (equivariance),} \label{eq:equivmomentmap}
\end{eqnarray}
for all $A \in T_{\nabla}\E(M,\omega)$, $\varphi\in \ham(M,\omega)$ and $H\in C^{\infty}_0(M)$.

Moreover, $\widetilde{\mu}$ is a formal deformation of $\mu(\nabla)$, that is
\begin{equation} \label{eq:deformmu}
\widetilde{\mu}(\nabla)(H)=\int_M H \mu(\nabla)\dvol + O(\nu),
\end{equation}
for all $H\in C^{\infty}_0(M)$.
\end{theorem}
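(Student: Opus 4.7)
The three claims of the theorem will be addressed in turn. The \emph{equivariance} \eqref{eq:equivmomentmap} follows directly from the naturality of the normalized trace $\tr^{*_{\varphi\cdot\nabla}}(H) = \tr^{*_\nabla}(\varphi^*H)$ (recalled before Lemma \ref{lemme:naturalalpha}): multiply by the normalization constant $-(2\pi)^n\cdot 24\cdot\nu^{n-2}$.

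For the \emph{formal moment map equation} \eqref{eq:formalmomentmap}, I differentiate the definition of $\widetilde{\mu}$ and apply Lemma \ref{lemme:tracevariation} to obtain $\ddto\widetilde{\mu}(\nabla+tA)(H) = -(2\pi)^n\cdot 24\cdot\nu^{n-2}\tr^{*_\nabla}(\invnu[\alpha_\nabla(A),Q^\nabla(H)]|_{y=0})$. On the other side, the tensoriality of $\Rr_\nabla$ together with Theorem \ref{theor:Ralpha} yields $\Rr_\nabla(\Lr_{X_H}\nabla,A)|_{y=0} = \invnu[\alpha_\nabla(\Lr_{X_H}\nabla),\alpha_\nabla(A)]|_{y=0}$, so after antisymmetrization \eqref{eq:formalmomentmap} reduces to the trace identity
$$\tr^{*_\nabla}\!\Bigl(\invnu\bigl[Q^\nabla(H)-\alpha_\nabla(\Lr_{X_H}\nabla),\alpha_\nabla(A)\bigr]\Big|_{y=0}\Bigr) = 0\qquad\forall A\in T_\nabla\E(M,\omega).$$

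To establish this vanishing, I exploit the naturality of Fedosov along the Hamiltonian flow $\varphi^t_H$: differentiating $(\varphi^t_H)^*Q^\nabla(F) = Q^{\varphi^{-t}_H\cdot\nabla}((\varphi^t_H)^*F)$ at $t=0$ and combining with Lemma \ref{lemme:QDF} (applied to $F$ viewed as a constant section of $\V$ in the direction $\Lr_{X_H}\nabla$) produces the structural identity $\invnu[\alpha_\nabla(\Lr_{X_H}\nabla), Q^\nabla(F)] - \Lr_{X_H}Q^\nabla(F) = Q^\nabla(\invnu[\tilde H,F]_{*_\nabla})$ for a quantum Hamiltonian $\tilde H = H + O(\nu^2)\in C^\infty(M)[[\nu]]$ that lifts $H$ to an inner generator of the $\Lr_{X_H}$-action on $\Gamma\W_{D^\nabla}$. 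Inserting this identity, and using the definition of $\alpha_\nabla(A)$ via its $D^\nabla$-potential $\overline{A} + \ddto r^{\nabla+tA}$ to rewrite the obstruction coming from the non-flatness of $\alpha_\nabla(A)$, recasts the integrand as the symbol of a $*_\nabla$-commutator of formal functions, whence the vanishing follows from $\tr^{*_\nabla}([\cdot,\cdot]_{*_\nabla}) = 0$.

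Finally, for the \emph{deformation property} \eqref{eq:deformmu}, I use the standard expansion $\rho^\nabla = \frac{1}{(2\pi\nu)^n}(1 + \nu^2\rho^\nabla_2 + \nu^4\rho^\nabla_4 + \cdots)$ of the Fedosov trace density (only even powers of $\nu$ appear by the parity symmetry of $*_\nabla$). For $H\in C^\infty_0(M)$ the $\nu^0$ contribution to $\tr^{*_\nabla}(H)$ vanishes, so multiplying by $-(2\pi)^n\cdot 24\cdot\nu^{n-2}$ isolates the leading term $-24\int_M H\rho^\nabla_2\dvol + O(\nu)$; the identification $\rho^\nabla_2 = -\mu(\nabla)/24$ from \cite{LLF} then delivers \eqref{eq:deformmu}. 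The main obstacle is indeed the trace identity in the moment-map step: the non-$D^\nabla$-closedness of $\alpha_\nabla(A)$ means that the $|_{y=0}$-commutator is not automatically a $*_\nabla$-commutator of symbols, and turning it into one requires the full structural lift of Hamiltonian vector fields to inner derivations of the Fedosov algebra.
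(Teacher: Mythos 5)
Your treatment of equivariance (naturality of the normalised trace) and of the deformation property \eqref{eq:deformmu} (expanding the trace density and citing $\rho^{\nabla}=\frac{1}{(2\pi\nu)^m}(1+\nu^2\rho_2^{\nabla}+\dots)$ with $\rho_2^{\nabla}=-\mu(\nabla)/24$ from \cite{LLF}) is sound; for the latter you take a slightly more direct route than the paper, which instead reads \eqref{eq:deformmu} off the order-zero part of the moment map equation, and your version has the advantage of not depending on that equation. Your reduction of \eqref{eq:formalmomentmap} to the identity $\tr^{*_{\nabla}}\bigl(\invnu[Q^{\nabla}(H)-\alpha_{\nabla}(\Lr_{X_H}\nabla),\alpha_{\nabla}(A)]\big|_{y=0}\bigr)=0$ also matches the paper exactly.

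The gap is in how you propose to establish that identity. The step you describe as ``inserting this identity \dots recasts the integrand as the symbol of a $*_{\nabla}$-commutator of formal functions'' is asserted, not carried out, and you yourself flag it as ``the main obstacle.'' Your structural identity only controls $\invnu[\alpha_{\nabla}(\Lr_{X_H}\nabla),\,\cdot\,]$ acting on \emph{flat} sections $Q^{\nabla}(F)$, whereas the commutator you must evaluate has $\alpha_{\nabla}(A)$ in the other slot, and $\alpha_{\nabla}(A)$ is not $D^{\nabla}$-flat; there is no evident way to exhibit $[Q^{\nabla}(H)-\alpha_{\nabla}(\Lr_{X_H}\nabla),\alpha_{\nabla}(A)]\big|_{y=0}$ as the symbol of a commutator of two flat sections, so the appeal to $\tr^{*_{\nabla}}([\cdot,\cdot]_{*_{\nabla}})=0$ does not close the argument. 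The missing ingredient is the explicit formula
$$Q^{\nabla}(H)=H-\omega_{ij}y^i X_{H}^j + \tfrac{1}{2}(\nabla^2_{kq}H) y^k y^q-\imath(X_{H})r^{\nabla}+ \alpha_{\nabla}(\Lr_{X_H}\nabla),$$
which requires a genuine Fedosov computation (checking that the right-hand side is $D^{\nabla}$-flat, using the structure equation for $r^{\nabla}$ and $(\Lr_{X_H}\nabla)_{ij}^k=(\nabla^2_{ij}X_H)^k+(R(X_H,\cdot)\cdot)_{ij}^k$). Once this is known, $Q^{\nabla}(H)-\alpha_{\nabla}(\Lr_{X_H}\nabla)$ is precisely the Gutt--Rawnsley generator of the Lie derivative, so $\invnu[\alpha_{\nabla}(A),Q^{\nabla}(H)-\alpha_{\nabla}(\Lr_{X_H}\nabla)]=-\ddto(\varphi^H_t)^*\alpha_{\nabla}(A)+\imath(X_H)D^{\nabla}\alpha_{\nabla}(A)+D^{\nabla}\imath(X_H)\alpha_{\nabla}(A)$, and each term vanishes \emph{identically} at $y=0$ (because $\alpha_{\nabla}(A)|_{y=0}=0$, $D^{\nabla}\alpha_{\nabla}(A)$ has positive $y$-degree, and $\alpha_{\nabla}(A)$ is a $0$-form). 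The conclusion is therefore pointwise and needs no trace property at all; your proposed endpoint is both weaker than what is true and not reached by the argument you give.
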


\begin{rem}
Let us comment the smoothness of $\widetilde{\mu}$. Actually, $\widetilde{\mu}$ may be seen as a the map 
$$\E(M,\omega) \rightarrow C^{\infty}(M)[[\nu]]: \nabla \mapsto -(2\pi)^m.24.\nu^{m-2}\rho^{\nabla}.$$
In that picture, $\rho^{\nabla}$ depends, at each order in $\nu$, polynomially on the curvature of $\nabla$, and its covariant derivative which make it a smooth map from $\E(M,\omega)$ to $C^{\infty}(M)[[\nu]]$.
\end{rem}

\noindent Recall that we say that $*_{\nabla}$ is \emph{closed} if the trace density $\rho^{\nabla}$ is constant. We have the straightforward corollary.
\begin{cor}
A symplectic connection $\nabla$ gives a closed star product $*_{\nabla}$ if and only if $\widetilde{\mu}(\nabla)=0$.
\end{cor}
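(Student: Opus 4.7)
The plan is to unpack the definitions on both sides and observe that the equivalence reduces to the elementary fact that the $L^2$-orthogonal complement of $C^\infty_0(M)$ inside $C^\infty(M)$ is the space of constants (and this stays true order by order in $\nu$).

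First, I would rewrite the formal moment map in terms of the trace density. By the very definition recalled just before the theorem, the normalised trace is the $L^2$-pairing
\begin{equation*}
\tr^{*_{\nabla}}(H) \;=\; \int_M H\,\rho^{\nabla}\,\dvol,
\end{equation*}
so that
\begin{equation*}
\widetilde{\mu}(\nabla)(H) \;=\; -(2\pi)^m\cdot 24\cdot \nu^{m-2}\int_M H\,\rho^{\nabla}\,\dvol
\qquad\text{for all } H\in C^{\infty}_0(M).
\end{equation*}
Since the overall prefactor is an invertible element of $\R[\nu^{-1},\nu]]$, vanishing of $\widetilde{\mu}(\nabla)$ is equivalent to
\begin{equation*}
\int_M H\,\rho^{\nabla}\,\dvol \;=\; 0 \qquad\text{for all } H\in C^{\infty}_0(M),
\end{equation*}
understood as an equality in $\R[\nu^{-1},\nu]]$.

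Second, I would argue coefficient by coefficient in $\nu$. Writing $\rho^{\nabla}=\sum_{k\geq -m}\nu^k\rho^{\nabla}_k$ with each $\rho^{\nabla}_k\in C^{\infty}(M)$, the previous identity becomes $\int_M H\rho^{\nabla}_k\,\dvol=0$ for every zero-mean smooth function $H$ and every $k$. The orthogonal complement of $C^{\infty}_0(M)$ inside $C^{\infty}(M)$ with respect to the Liouville $L^2$-pairing is exactly the space of constant functions; hence each $\rho^{\nabla}_k$ is a constant. Conversely, if every $\rho^{\nabla}_k$ is constant, the pairing against a zero-mean $H$ vanishes trivially at every order. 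Thus $\widetilde{\mu}(\nabla)=0$ if and only if $\rho^{\nabla}$ is constant, which by definition means $*_{\nabla}$ is closed.

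There is essentially no obstacle; the only thing to keep in mind is the formal structure, namely that both $\rho^{\nabla}$ and $\widetilde{\mu}(\nabla)(H)$ live in $\R[\nu^{-1},\nu]]$ so the equivalence must be checked in each power of $\nu$ separately, but at each fixed order it is a standard fact about integration on the closed symplectic manifold $(M,\omega)$.
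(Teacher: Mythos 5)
Your proof is correct and is exactly the argument the paper has in mind: the paper states this corollary without proof as a ``straightforward'' consequence of the definitions, and the content is precisely your observation that $\widetilde{\mu}(\nabla)(H)$ is an invertible multiple of $\int_M H\,\rho^{\nabla}\,\frac{\omega^n}{n!}$, so that vanishing against all zero-mean $H$ forces each coefficient of $\rho^{\nabla}$ to be constant, which is the definition of closedness of $*_{\nabla}$.
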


\noindent Again the proof of the above Theorem is splitted into several Lemmas. 

We first give formulas for the derivative of the action of $\ham(M,\omega)$ on $\Gamma\W \otimes \Omega(M)$. Consider a smooth map $t\in [0,1]\mapsto H_t\in C^{\infty}(M)$, it generates an Hamiltonian isotopy $\varphi_t^{H_{\cdot}}$ as the unique solution of the evolution equation:
\begin{equation*} \label{eq:Hamdiff}
\left\{\begin{array}{ccl}
\ddt \varphi_t^{H_{\cdot}}(\cdot) & = & X_{H_t,\varphi_t^{H_{\cdot}}(\cdot)},\\
\varphi_0^{H_{\cdot}}& = & Id,
\end{array}
\right.
\end{equation*}
where $ X_{H_t,\varphi_t^{H_{\cdot}}(\cdot)}$ is the time dependent vector field $X_{H_t}$ evaluated at the point $\varphi_t^{H_{\cdot}}(\cdot)$, in the indices of $\varphi_t^{H_{\cdot}}(\cdot)$, we wrote $H_{\cdot}$ to refer to the map $t\mapsto H_t$ on which the isotopy depends. Recall that the collection of all time-$1$ maps $\varphi_1^{H_{\cdot}}$ for all smooth maps $H_t$ gives the group of Hamiltonian diffeomorphisms.

\begin{lemma}\label{lemme:Liederiv}
Consider a smooth map $t\in [0,1]\mapsto H_t\in C^{\infty}(M)$, then the derivative of the action of $\varphi_t^{H_{\cdot}}$ on $\Gamma\W\otimes\Lambda(M)$ is given by the formula:
\begin{equation*}
\ddt (\varphi_t^{H_{\cdot}})^*=(\varphi_t^{H_{\cdot}})^*\left(\imath(X_{H_t})D^{\nabla}+D^{\nabla}\imath(X_{H_t})+\invnu\left[- \omega_{ij}y^i X_{H_t}^j + \frac{1}{2}(\nabla^2_{kq}H_t) y^k y^q-\imath(X_{H_t})r^{\nabla},\cdot \right]\right)
\end{equation*}
\end{lemma}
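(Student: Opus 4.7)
The chain rule for a time-dependent flow gives $\ddt(\varphi_t^{H_{\cdot}})^* = (\varphi_t^{H_\cdot})^*\,\Lr_{X_{H_t}}$, where $\Lr_X$ denotes the infinitesimal generator of the pull-back action on $\Gamma\W\otimes\Lambda$ (acting on the $y$'s via the tangent map of the flow and on the $dx$'s by the standard de Rham Lie derivative). The lemma is thus equivalent to the pointwise identity
\begin{equation*}
\Lr_X = \imath(X)D^{\nabla}+D^{\nabla}\imath(X)+\invnu[\tilde H,\,\cdot\,],
\end{equation*}
with $X=X_{H_t}$ and $\tilde H=-\omega_{ij}y^iX^j+\tfrac12(\nabla^2 H_t)_{kq}y^ky^q-\imath(X)r^{\nabla}$. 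My plan is to recognise this as a Cartan-type formula and to verify it on algebra generators of $\Gamma\W\otimes\Lambda$.

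Both sides of the identity are degree-$0$ derivations of the graded $\circ$-algebra: the LHS by the naturality of the $\circ$-product under pull-back (recalled in the paragraph preceding Lemma~\ref{lemme:naturalalpha}), and the RHS because $\imath(X)$ and $D^{\nabla}$ are graded derivations of degrees $-1$ and $+1$ whose (graded) commutator is a degree-$0$ derivation, while $\invnu[\tilde H,\,\cdot\,]$ is manifestly an inner derivation. By Leibniz, the verification reduces to a check on local algebra generators: functions $f\in C^{\infty}(M)$, the coordinates $y^j$, and the $1$-forms $dx^k$. Expanding $D^{\nabla}=\partial^{\nabla}-\delta+\invnu[r^{\nabla},\cdot]$ and using that $\imath(X)$ is a graded derivation of the $\circ$-commutator, one first obtains the Cartan-like identity
\begin{equation*}
(\imath(X)D^{\nabla}+D^{\nabla}\imath(X))a = \Lr_X^{\mathrm{dR}}a+\invnu\bigl[\imath(X)\overline{\Gamma}+\omega_{ij}y^iX^j+\imath(X)r^{\nabla},\,a\bigr],
\end{equation*}
where $\Lr_X^{\mathrm{dR}}$ treats the $y$'s as scalars. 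Adding $\invnu[\tilde H,\,\cdot\,]$ cancels the $\omega_{ij}y^iX^j$ and $\imath(X)r^{\nabla}$ pieces by construction of $\tilde H$, leaving $\Lr_X^{\mathrm{dR}}+\invnu[\imath(X)\overline{\Gamma}+\tfrac12(\nabla^2 H)_{kq}y^ky^q,\,\cdot\,]$ as the RHS. On $f$ and on $dx^k$ this immediately reproduces $X(f)$ and $d(X^k)$, matching $\Lr_X$.

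The substantive check is on $y^j$, where $\Lr_Xy^j=\partial_lX^jy^l$. Using the bracket $\invnu[y^ay^b,y^j]=\Lambda^{aj}y^b+\Lambda^{bj}y^a$, the residual inner derivation produces $-\Gamma^j_{lk}X^ly^k+(\nabla^2 H)_{kq}\Lambda^{kj}y^q$. Decomposing $(\nabla^2 H)_{kq}=\partial_k\partial_qH-\Gamma^n_{kq}\partial_nH$ and applying Hamilton's equation $\omega_{ab}X^a=\partial_bH$, the partial-Hessian piece becomes $\partial_qX^jy^q$ and the remainder is a further Christoffel term. The main obstacle is to show that this Christoffel remainder cancels $-\Gamma^j_{lk}X^ly^k$; this uses the complete symmetry of the lowered Christoffel symbol $\Gamma_{mki}=\omega_{mn}\Gamma^n_{ki}$ in all three indices—the same symplectic symmetry that parametrises $T\E(M,\omega)$ via the $S^3T^*M$-valued $\underline{A}$ at the start of the paper. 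Once that cancellation is performed, one reads off precisely $\partial_lX^jy^l$, completing the proof.
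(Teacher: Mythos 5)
Your argument is correct, and it is worth noting how it relates to the paper's own (very short) proof. The paper does exactly your first step --- $\ddt(\varphi_t^{H_\cdot})^* = (\varphi_t^{H_\cdot})^*\Lr_{X_{H_t}}$ with $\Lr$ the tensorial Lie derivative --- and then simply cites Gutt--Rawnsley \cite{gr3} for the identity expressing $\Lr_X$ in terms of $D^{\nabla}$ and the inner derivation. You instead prove that identity from scratch: you observe that both sides are degree-zero, filtration-continuous derivations of the $\circ$-algebra (the left side by naturality of the pull-back, the right side as a graded commutator of graded derivations plus an inner derivation), reduce to the generators $f$, $dx^k$, $y^j$, and carry out the only nontrivial check on $y^j$. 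The computation there is right: with the paper's conventions $\Lambda^{ik}\omega_{kj}=\delta^i_j$ and $X_H^a\omega_{ab}=\partial_bH$, the bracket $\invnu[\imath(X)\overline{\Gamma},y^j]$ yields $-\Gamma^j_{im}X^iy^m$ (using the total symmetry of $\omega_{lk}\Gamma^k_{im}$, which combines torsion-freeness with $\nabla\omega=0$), while the covariant Hessian contributes $\partial_qX^jy^q+\Gamma^j_{aq}X^ay^q$, so the Christoffel terms cancel and one is left with $\Lr_Xy^j=\partial_qX^jy^q$ as required. The intermediate Cartan-type identity $(\imath(X)D^{\nabla}+D^{\nabla}\imath(X))a=\Lr_X^{\mathrm{dR}}a+\invnu[\imath(X)\overline{\Gamma}+\omega_{ij}y^iX^j+\imath(X)r^{\nabla},a]$ is also correct, since $\imath(X)$ is an antiderivation of the graded commutator, giving $\imath(X)[\beta,\cdot]+[\beta,\imath(X)\cdot]=[\imath(X)\beta,\cdot]$ for any $\W$-valued $1$-form $\beta$. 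So your proposal buys a self-contained verification of the formula the paper imports as a black box; the cost is only the index bookkeeping, and no step is missing.
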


\begin{proof}
One first notice that $\ddt (\varphi_t^{H_{\cdot}})^*a=(\varphi_t^{H_{\cdot}})^*(\Lr_{X_{H_t}}a)$, for $a\in \Gamma\W\otimes\Lambda M$ and $\Lr$ denoting the Lie derivative of tensors. Then, we use the formula for the Lie derivative from \cite{gr3}.
\end{proof}

\begin{lemma}\label{lemme:QHformula}
Let $H\in C^{\infty}(M)$ and $\nabla\in \E(M,\omega)$, we have:
$$Q^{\nabla}(H)=H-\omega_{ij}y^i X_{H}^j + \frac{1}{2}(\nabla^2_{kq}H) y^k y^q-\imath(X_{H})r^{\nabla}+ \alpha_{\nabla}(\Lr_{X_H}\nabla).$$
\end{lemma}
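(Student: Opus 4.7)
The plan is to verify that the RHS, call it $a$, is the unique $D^\nabla$-flat section of $\Gamma\W$ with symbol $H$; by Fedosov's uniqueness, this forces $a=Q^\nabla(H)$. The symbol check is immediate since $r^\nabla|_{y=0}=0$ (the normalization $\delta^{-1}r^\nabla=0$ forces $y$-degree at least one in the Fedosov solution) and $\alpha_\nabla(A)|_{y=0}=0$ by Lemma \ref{lemme:Dinverse}, giving $a|_{y=0}=H$.

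For the flatness, write $a=H+s_H+\alpha_\nabla(\Lr_{X_H}\nabla)$, where $s_H:=-\omega_{ij}y^iX_H^j+\tfrac{1}{2}(\nabla^2_{kq}H)y^ky^q-\imath(X_H)r^\nabla$ is the inner-derivation generator from Lemma \ref{lemme:Liederiv}. Using $D^\nabla H=dH$ and the defining equation $D^\nabla\alpha_\nabla(\Lr_{X_H}\nabla)=\overline{\Lr_{X_H}\nabla}+\ddto r^{\nabla+t\Lr_{X_H}\nabla}$, the flatness $D^\nabla a=0$ becomes
\[
dH+D^\nabla s_H+\overline{\Lr_{X_H}\nabla}+\ddto r^{\nabla+t\Lr_{X_H}\nabla}=0.
\]
To establish this, I compare two presentations of the graded commutator $[\Lr_{X_H},D^\nabla]$. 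From Lemma \ref{lemme:Liederiv} combined with $(D^\nabla)^2=0$, one obtains $[\Lr_{X_H},D^\nabla]=-\invnu[D^\nabla s_H,\cdot]$. Differentiating the naturality $\varphi_t^*D^\nabla(\varphi_t^{-1})^*=D^{(\varphi_t^H)^{-1}\cdot\nabla}$ at $t=0$, with $\ddto \bigl((\varphi_t^H)^{-1}\cdot\nabla\bigr)=\Lr_{X_H}\nabla$ and the variational formula $\ddto D^{\nabla+tA}=\invnu[\overline A+\ddto r^{\nabla+tA},\cdot]$, one also gets $[\Lr_{X_H},D^\nabla]=\invnu[\overline{\Lr_{X_H}\nabla}+\ddto r^{\nabla+t\Lr_{X_H}\nabla},\cdot]$. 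Since the kernel of the inner-derivation map is the center $\Omega^*(M)[[\nu]]$, equating the two expressions forces the LHS of the target identity to lie in $\Omega^*(M)[[\nu]]$. Evaluating at $y=0$, where both $\overline A|_{y=0}$ and $\ddto r^{\nabla+tA}|_{y=0}$ vanish, this central element is identified with $D^\nabla s_H|_{y=0}$, which a direct computation of $ds_H-\delta s_H+\invnu[\overline\Gamma+r,s_H]$ at $y=0$ shows to equal $-dH$ (the leading contribution being $-\delta((\partial_iH)y^i)|_{y=0}=-(\partial_iH)dx^i=-dH$).

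The main obstacle is the explicit evaluation $D^\nabla s_H|_{y=0}=-dH$ at all orders in $\nu$: while the $\nu^0$ piece is transparent, the higher-order corrections from $\invnu[\overline\Gamma,s_H]$, $\invnu[r,s_H]$ and the $r$-contributions to $\delta s_H$ at $y=0$ must combine to yield no further contribution. Their cancellation is ultimately governed by the Fedosov equation $\overline R+\partial r-\delta r+\invnu r\circ r=0$ and by the normalization $\delta^{-1}r=0$; once this is verified, $D^\nabla a=dH-dH=0$ and the proof concludes by uniqueness.
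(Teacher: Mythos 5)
Your overall strategy is sound and genuinely different from the paper's: the paper verifies flatness of the right-hand side by a direct term-by-term computation of $D^{\nabla}$, whereas you reduce everything to the observation that $\Theta:=D^{\nabla}s_H+\overline{\Lr_{X_H}\nabla}+\ddto r^{\nabla+t\Lr_{X_H}\nabla}$ is central (by comparing the two expressions for $[\Lr_{X_H},D^{\nabla}]$), hence equal to its value at $y=0$, namely $\left.D^{\nabla}s_H\right|_{y=0}$. That reduction is correct and is a nice simplification in principle. The symbol check and the two commutator computations are also fine.

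However, the proof is not complete: the assertion $\left.D^{\nabla}s_H\right|_{y=0}=-dH$ is exactly where the real work lies, and you only verify its $\nu^0$ part. Beyond that order there are genuinely nonzero individual contributions, e.g.\ $\left.\delta(\imath(X_H)r^{\nabla})\right|_{y=0}$ (the solution $r^{\nabla}$ does contain terms with a single $y$ and $\nu$-degree $\geq 2$, produced by $\delta^{-1}$ of the fully contracted part of $\invnu r\circ r$), as well as $\left.\invnu[\omega_{ij}y^iX_H^j,r^{\nabla}]\right|_{y=0}$ and $\left.\invnu[\imath(X_H)r^{\nabla},r^{\nabla}]\right|_{y=0}$. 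Saying that their cancellation ``is ultimately governed by the Fedosov equation'' is an expectation, not an argument. The missing ingredient is to apply the Cartan-type formula of Lemma \ref{lemme:Liederiv} to $r^{\nabla}$ itself: since $\Lr_{X_H}$ preserves the number of $y$'s, restricting
$\Lr_{X_H}r^{\nabla}=\imath(X_H)D^{\nabla}r^{\nabla}+D^{\nabla}\imath(X_H)r^{\nabla}+\invnu[s_H,r^{\nabla}]$
to $y=0$ and substituting $D^{\nabla}r^{\nabla}=\frac{1}{2\nu}[r^{\nabla},r^{\nabla}]-\overline{R}$ yields precisely
$0=-\left.\delta(\imath(X_H)r^{\nabla})\right|_{y=0}-\left.\invnu[\omega_{ij}y^iX_H^j,r^{\nabla}]\right|_{y=0}-\left.\invnu[\imath(X_H)r^{\nabla},r^{\nabla}]\right|_{y=0}$,
which is the identity you need. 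This is, in essence, the step the paper performs when it expands $\ddto(\varphi_t^{H})^{*}r^{\nabla}$; your argument never invokes it, so as written the conclusion $D^{\nabla}a=0$ does not follow. With this lemma inserted, your route closes and is arguably cleaner than the paper's, since you only need the $y=0$ component of the cancellation rather than the full identity.
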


\begin{proof}
It is enough to check the RHS is a flat section, as at $y=0$ both sides equal $H$.

Let us start with $\alpha_{\nabla}(\Lr_{X_H}\nabla)$.
$$D^{\nabla}\alpha_{\nabla}(\Lr_{X_H}\nabla)=\overline{\Lr_{X_H}\nabla}+ \ddto r^{(\varphi_t^{H})^{-1}\cdot\nabla}.$$
By naturality, $ \ddto r^{(\varphi_t^{H})^{-1}\cdot\nabla}= \ddto (\varphi_t^{H})^{*}r^{\nabla}$. Then we use Lemma \ref{lemme:Liederiv} to obtain
$$\ddto r^{(\varphi_t^{H})^{-1}\cdot\nabla}=\imath(X_{H})D^{\nabla}r^{\nabla}+D^{\nabla}\imath(X_{H})r^{\nabla}+\invnu\left[- \omega_{ij}y^i X_{H}^j + \frac{1}{2}(\nabla^2_{kq}H) y^k y^q-\imath(X_{H})r^{\nabla},r^{\nabla} \right].$$
From Equation \eqref{eq:req} defining $r^{\nabla}$, we have $D^{\nabla}r^{\nabla}=\frac{1}{2\nu}[r^{\nabla},r^{\nabla}]-\overline{R}$, so that
$$\ddto r^{(\varphi_t^{H})^{-1}\cdot\nabla}=D^{\nabla}\imath(X_{H})r^{\nabla}+\invnu\left[- \omega_{ij}y^i X_{H}^j + \frac{1}{2}(\nabla^2_{kq}H) y^k y^q,r^{\nabla}\right]-\imath(X_H)\overline{R}.$$
Now, since $(\Lr_{X_H}\nabla)_{ij}^k=(\nabla^2_{ij}X_H)^k+(R(X_H,\cdot)\cdot)_{ij}^k$, we end with
\begin{equation}\label{eq:Dalpha}
D^{\nabla}\alpha_{\nabla}(\Lr_{X_H}\nabla)=D^{\nabla}\imath(X_{H})r^{\nabla}+\invnu\left[- \omega_{ij}y^i X_{H}^j + \frac{1}{2}(\nabla^2_{kq}H) y^k y^q,r^{\nabla} \right]+\frac{1}{2}\omega_{lk}(\nabla^2_{ij}X_H)^ky^ly^k dx^l.
\end{equation}

Now, it remains to compute $D^{\nabla}(H-\omega_{ij}y^i X_{H}^j + \frac{1}{2}(\nabla^2_{kq}H) y^k y^q-\imath(X_{H})r^{\nabla})$. Let us do it term by term.
First,
\begin{equation}\label{eq:DH}
D^{\nabla}H=dH.
\end{equation}
After that, 
\begin{equation}\label{eq:Domega}
D^{\nabla}\omega_{ij}y^i X_{H}^j=-\nabla^2_{pq}Hy^qdx^p+dH+\invnu[r^{\nabla},\omega_{ij}y^i X_{H}^j].
\end{equation}
The next one gives
\begin{equation}\label{eq:Dnabla2}
D^{\nabla}(\frac{1}{2}(\nabla^2_{kq}H) y^k y^q)=-\frac{1}{2}\omega_{pr}(\nabla^2_{ij}X_H)^ry^py^jdx^i-\nabla^2_{pq}Hy^qdx^p+\frac{1}{2\nu}[r^{\nabla},(\nabla^2_{kq}H) y^k y^q],
\end{equation}
where in the RHS's of the last three equations each term corresponds to the $\partial^{\nabla}$, $\delta$ and $\invnu[r^{\nabla},\cdot]$ terms, in this order. We do not compute in details the term $D^{\nabla}\imath(X_{H})r^{\nabla}$.

Combining Equations \eqref{eq:Dnabla2}, \eqref{eq:Domega}, \eqref{eq:DH} and \eqref{eq:Dalpha} we obtain 
$$D^{\nabla}(H-\omega_{ij}y^i X_{H}^j + \frac{1}{2}(\nabla^2_{kq}H) y^k y^q-\imath(X_{H})r^{\nabla}+ \alpha_{\nabla}(\Lr_{X_H}\nabla))=0,$$
which shows the formula stated in the Lemma.
\end{proof}

\begin{proof}[Proof of Theorem \ref{theor:formalmomentmap}]
We have to prove the formal moment map Equation \eqref{eq:formalmomentmap}. From the variation formula from Lemma \ref{lemme:tracevariation}, we have
$$\ddto \tr^{\nabla+tA}(H)=\tr^{\nabla}(\left.\invnu[\alpha_{\nabla}(A),Q(H)]\right|_{y=0}).$$
Also,
\begin{eqnarray*}
\widetilde{\Omega}^{\E}_{\nabla}(\Lr_{X_H}\nabla,A) & = &(2\pi)^m.24.\nu^{m-2} \tr^{\nabla}(\left.\Rr^{\nabla}(\Lr_{X_H}\nabla,A)\right|_{y=0})\\
 & = & (2\pi)^m.24.\nu^{m-2} \tr^{\nabla}(\left.\invnu[\alpha_{\nabla}(\Lr_{X_H}\nabla),\alpha_{\nabla}(A)]\right|_{y=0}).
\end{eqnarray*}

So that to show the moment map Equation \eqref{eq:formalmomentmap}, we need to prove 
\begin{equation}\label{eq:toshow}
\tr^{\nabla}(\left.\invnu[\alpha_{\nabla}(A),Q(H)-\alpha_{\nabla}(\Lr_{X_H}\nabla)]\right|_{y=0})=0.
\end{equation}
Now, using Lemma \ref{lemme:QHformula}, we obtain ($H$ being central)
$$\invnu[\alpha_{\nabla}(A),Q(H)-\alpha_{\nabla}(\Lr_{X_H}\nabla)]= \invnu[\alpha_{\nabla}(A),-\omega_{ij}y^i X_{H}^j + \frac{1}{2}(\nabla^2_{kq}H) y^k y^q-\imath(X_{H})r^{\nabla}].$$
By Lemma \ref{lemme:Liederiv}, we get
$$\invnu[\alpha_{\nabla}(A),Q(H)-\alpha_{\nabla}(\Lr_{X_H}\nabla)]=-\ddto (\varphi^H_t)^*\alpha_{\nabla}(A) + \imath(X_{H})D^{\nabla}\alpha_{\nabla}(A)+D^{\nabla}\imath(X_{H})\alpha_{\nabla}(A).$$
Finally, at $y=0$, $\alpha_{\nabla}(A)$ vanishes and so does $\ddto (\varphi^H_t)^*\alpha_{\nabla}(A)$. Also, $D^{\nabla}\alpha_{\nabla}(A)$ contains at least a $y$ in every term and $\imath(X_{H})\alpha_{\nabla}(A)=0$, since $\alpha_{\nabla}(A)$ is a $0$-form. It shows Equation \eqref{eq:toshow} is satisfied, proving the formal moment map Equation.

The equivariance Equation \eqref{eq:equivmomentmap} is a direct consequence of the naturality of Fedosov construction.

Equation \eqref{eq:deformmu} is direct when one notices that at order $0$ in $\nu$ the formal moment map equation becomes the moment map equation for the action of $\ham(M,\omega)$ on $(\E(M,\omega),\Omega^{\E})$ as in \cite{cagutt}.
\end{proof}

\section{Applications: Hamiltonian diffeomorphisms and automorphisms of star products} \label{sect:Applications}

We first recall the construction of Hamiltonian automorphisms from \cite{LLF0}, by solving Heisenberg equation at the level of a quantum algebra $(C^{\infty}(M)[[\nu]],*)$ on $(M,\omega)$. Recall that a derivation $D$ of $*$ is \emph{quasi-inner} if there is $H\in C^{\infty}(M)[[\nu]]$ so that $D=D_H:=\invnu[H,\cdot]_*$. For a smooth map $t\in [0,1]\mapsto H_t \in C^{\infty}(M)[[\nu]]$ there exists a unique $A_t^{H_{\cdot}}$ \emph{path of Hamiltonian automorphisms} of the star product $*$ satisfying the Heisenberg equation
\begin{equation*} \label{eq:Hamaut}
\left\{\begin{array}{ccl}
\ddt A_t^{H_{\cdot}} & = & D_{H_t}A_t^{H_{\cdot}},\\
A_0^{H_{\cdot}}& = & Id,
\end{array}
\right.
\end{equation*}
where we use similar notations as for Hamiltonian isotopies. Moreover, if we write $H_t=H_{0,t}+\nu \ldots$ then $A_t^{H_{\cdot}} =\left((\varphi_t^{-H_{0,\cdot}} )^{-1}\right)^*+\nu\ldots.$

The \emph{group of Hamiltonian automorphisms} \cite{LLF0} of a star product $*$ is defined by
$$\ham(M,*):=\{A=A_1^{H_{\cdot}} \textrm{ for smooth } t\mapsto H_t\in C^{\infty}(M)[[\nu]]\}.$$
It projects naturally to $\ham(M,\omega)$ by $\cl:\ham(M,*)\rightarrow \ham(M,\omega):A=\varphi^*+\nu\ldots\mapsto \varphi$.

In \cite{LLF0}, the question of determining $\cl^{-1}(Id)$ is raised. It amounts to understand Hamiltonian automorphisms of the form $A_1^{H_{\cdot}}= Id+\nu \ldots$ for $-H_{0,t}$ generating a loop of Hamiltonian diffeomorphisms $\{(\varphi_t^{-H_{0,\cdot}} )^{-1}\}$ . Any automorphism of $*$ of the form $A=Id+\nu \ldots$ is of the form $\exp(\nu D)$ for a derivation $D$ of the star product. Moreover, all automorphisms $\exp(\nu D_G)$  for $G\in  C^{\infty}(M)[[\nu]]$ are in $\ham(M,*)$. When $H^1_{dR}(M)\neq 0$ not all derivations are quasi-inner, it is then a natural question to wonder: what does the quotient 
$$\cl^{-1}(Id)/\{\exp (\nu D_{G}) \textrm{ for } G\in  C^{\infty}(M)[[\nu]]\}$$
looks like? In \cite{LLF0}, it is shown this quotient depends only on the $\pi_1(\ham(M,\omega))$ and the characteristic class of $*$. Also, if  the above quotient is, say, at most countable, it implies the local model of $\ham(M,*)$ is an open subset af $C^{\infty}(M)[[\nu]]$.

\subsection{Parallel transport and Hamiltonian automorphisms}

Let $H_t \in C^{\infty}(M)$ generating $\varphi_t^{H_{\cdot}}$. Consider the smooth path of connection $\{(\varphi_t^{H_{\cdot}})^{-1}\cdot\nabla\}$, for a given symplectic connection $\nabla$. Recall that by naturality of the Fedosov construction $D^{(\varphi_t^{H_{\cdot}})^{-1}\cdot\nabla}=(\varphi_t^{H_{\cdot}})^{*}D^{\nabla}((\varphi_t^{H_{\cdot}})^{-1})^*$ and that $(\varphi_t^{H_{\cdot}})^{*}$ is an isomorphism of flat sections algebra:
$$(\varphi_t^{H_{\cdot}})^{*}:\Gamma\W_{D^{\nabla}} \stackrel{\cong}{\rightarrow} \Gamma\W_{D^{(\varphi_t^{H_{\cdot}})^{-1}\cdot\nabla}}.$$

Now, we consider $v_t\in\Gamma\W^+$ generated by $h_t:=-\alpha_{(\varphi_t^{H_{\cdot}})^{-1}\cdot\nabla}(\frac{d}{dt}(\varphi_t^{H_{\cdot}})^{-1}\cdot\nabla)$ obtained from Theorem \ref{theor:smoothisom}. The conjugation $a \in \Gamma\W_{D^{\nabla}}\mapsto v_t\circ a\circ v_t^{-1}\in \Gamma\W_{D^{(\varphi_t^{H_{\cdot}})^{-1}\cdot\nabla}}$  gives the parallel transport for the formal connection $\D$ along $\{(\varphi_t^{H_{\cdot}})^{-1}\cdot\nabla\}$.

We define the automorphism $B_t$ of $(C^{\infty}(M)[[\nu]],*_{\nabla})$ by
$$B_t:C^{\infty}(M)[[\nu]]\rightarrow C^{\infty}(M)[[\nu]] :\left.((\varphi_t^{H_{\cdot}})^{-1})^*(v_t\circ Q^{\nabla}(F) \circ v_t^{-1})\right|_{y=0}. $$
Inspired by \cite{FU} and \cite{Ch2}, we give the link between parallel transport and solution to Heisenberg equation.

\begin{prop} \label{prop:Btparallel}
For all $F\in C^{\infty}(M)[[\nu]]$ we have
\begin{equation*} \label{eq:Hamparallel}
\left\{\begin{array}{ccl}
\ddt B_t(F) & = &- \invnu[H_t,B_t(F)]_{*_{\nabla}},\\
B_0(F)& = & F.
\end{array}
\right.
\end{equation*}
Hence, $B_t$ is a Hamiltonian automorphism of $*_{\nabla}$.
\end{prop}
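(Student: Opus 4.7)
The initial condition $B_0 = \mathrm{Id}$ is immediate from $v_0 = 1$ and $(\varphi_0^{H_{\cdot}})^{-1} = \mathrm{Id}$. For the derivative, I would work on the Weyl-algebra side. Set $\psi_t := (\varphi_t^{H_{\cdot}})^{-1}$, $\nabla_t := \psi_t\cdot \nabla$, and $\tilde a_t := Q^\nabla(B_t(F)) = \psi_t^*(v_t \circ Q^\nabla(F) \circ v_t^{-1})$: the latter equality holds because $v_t \circ Q^\nabla(F) \circ v_t^{-1}$ is $D^{\nabla_t}$-flat by Theorem \ref{theor:smoothisom}, and by the naturality of Fedosov's construction the pull-back $\psi_t^*$ sends $\Gamma\W_{D^{\nabla_t}}$ to $\Gamma\W_{D^{\nabla}}$. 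Since $Q^\nabla$ is an algebra isomorphism between $(C^{\infty}(M)[[\nu]],*_\nabla)$ and $(\Gamma\W_{D^\nabla},\circ)$, proving the Heisenberg equation reduces to
\begin{equation*}
\ddt \tilde a_t = -\invnu[Q^\nabla(H_t),\tilde a_t].
\end{equation*}

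I would compute $\ddt \tilde a_t$ via Leibniz. The derivative of $\psi_t^*$ is extracted from $\varphi_t^{H_{\cdot}*}\circ \psi_t^* = \mathrm{Id}$ together with Lemma \ref{lemme:Liederiv}, giving $\ddt \psi_t^* = -\Lr_{X_{H_t}}\circ \psi_t^*$, which contributes $-\Lr_{X_{H_t}}\tilde a_t$. The defining Equation \eqref{eq:vt} for $v_t$ yields $\ddt(v_t\circ Q^\nabla(F)\circ v_t^{-1}) = \invnu[h_t,\, v_t\circ Q^\nabla(F)\circ v_t^{-1}]$, and since $\psi_t^*$ is an algebra morphism for $\circ$ this contributes $\invnu[\psi_t^*h_t,\tilde a_t]$.

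The main technical input is the identification $\psi_t^* h_t = -\alpha_\nabla(\Lr_{X_{H_t}}\nabla)$. By a direct computation of the velocity of the path $\nabla_t=\psi_t\cdot\nabla$ at time $t$ one finds $\ddt \nabla_t = \varphi_t^{H_{\cdot}*}(\Lr_{X_{H_t}}\nabla)$. Lemma \ref{lemme:naturalalpha} then transports the connection one-form $\alpha$ under the action and yields $\alpha_{\nabla_t}(\ddt\nabla_t) = \varphi_t^{H_{\cdot}*}\alpha_\nabla(\Lr_{X_{H_t}}\nabla)$; recalling that $h_t = -\alpha_{\nabla_t}(\ddt\nabla_t)$ and applying $\psi_t^*$ gives the claim.

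Finally I would expand $\Lr_{X_{H_t}}\tilde a_t$ via the explicit formula of Lemma \ref{lemme:Liederiv}: the terms $\imath(X_{H_t})D^\nabla\tilde a_t$ and $D^\nabla\imath(X_{H_t})\tilde a_t$ both vanish because $\tilde a_t$ is $D^\nabla$-flat and has zero antisymmetric degree. Inserting this and the formula for $\psi_t^*h_t$ into the Leibniz computation, one sees that $\ddt \tilde a_t = \invnu[E_t,\tilde a_t]$ where, by Lemma \ref{lemme:QHformula} applied to the function $H_t$, the element $E_t$ equals $-Q^\nabla(H_t)+H_t$. Since $H_t$ is a scalar and therefore central in $\Gamma\W$, it drops out of the commutator, yielding $\ddt\tilde a_t = -\invnu[Q^\nabla(H_t),\tilde a_t]$ as required. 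The main obstacle will be the sign bookkeeping when relating $\psi_t=(\varphi_t^{H_{\cdot}})^{-1}$ and its infinitesimal action on $\E(M,\omega)$ to the Hamiltonian vector field $X_{H_t}$, together with the careful handling of pull-back versus push-forward conventions throughout.
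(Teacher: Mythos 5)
Your proposal is correct and follows essentially the same route as the paper: write $Q^{\nabla}(B_t(F))$ as $((\varphi_t^{H_{\cdot}})^{-1})^*(v_t\circ Q^{\nabla}(F)\circ v_t^{-1})$, differentiate by Leibniz, handle the pull-back term with Lemma \ref{lemme:Liederiv} (the $\imath(X_{H_t})D^{\nabla}$ and $D^{\nabla}\imath(X_{H_t})$ terms dropping out by flatness and antisymmetric degree $0$), identify the generator term via Lemma \ref{lemme:naturalalpha} as $-\alpha_{\nabla}(\Lr_{X_{H_t}}\nabla)$, and recombine through Lemma \ref{lemme:QHformula} using centrality of $H_t$. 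The only cosmetic difference is that you invoke the isomorphism $Q^{\nabla}$ to conclude, where the paper simply evaluates the resulting Weyl-algebra identity at $y=0$.
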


\begin{proof}
Clearly, $B_0$ is the identity. We write $\widetilde{B}_t(F)=((\varphi_t^{H_{\cdot}})^{-1})^*(v_t\circ Q^{\nabla}(F) \circ v_t^{-1})$ and we compute 
\begin{eqnarray*}
\ddt  \widetilde{B}_t(F) & =  & -((\varphi_t^{H_{\cdot}})^{-1})^*(\ddt(\varphi_t^{H_{\cdot}})^*)\widetilde{B}_t(F)+((\varphi_t^{H_{\cdot}})^{-1})^*\ddt v_t\circ Q^{\nabla}(F) \circ v_t^{-1} \\
 & = & -\invnu[- \omega_{ij}y^i X_{H_t}^j + \frac{1}{2}(\nabla^2_{kq}H_t) y^k y^q-\imath(X_{H_t})r^{\nabla},\widetilde{B}_t(F)]\\
  & & -\invnu[((\varphi_t^{H_{\cdot}})^{-1})^*\alpha_{(\varphi_t^{H_{\cdot}})^{-1}\cdot\nabla}(\frac{d}{dt}   (\varphi_t^{H_{\cdot}})^{-1}\cdot\nabla),\widetilde{B}_t(F)]\\
  & = & -\invnu[- \omega_{ij}y^i X_{H_t}^j + \frac{1}{2}(\nabla^2_{kq}H_t) y^k y^q-\imath(X_{H_t})r^{\nabla},\widetilde{B}_t(F)]-\invnu[\alpha_{\nabla}(\Lr_{X_{H_t}}\nabla),\widetilde{B}_t(F)] \\ 
  & = & -\invnu[Q^{\nabla}(H_t),\widetilde{B}_t(F)],
\end{eqnarray*}
where we used Lemma \ref{lemme:Liederiv} and the definition of $v_t$ to get the second equality, Lemma \ref{lemme:naturalalpha} for the third one and Lemma \ref{lemme:QHformula} for the last equality.
Equating both sides at $y=0$, we obtain that $B_t$ is indeed the path of Hamiltonian automorphism generated by $-H_t$.
\end{proof}

Now, assume $H_t \in C^{\infty}(M)$ generates a loop of Hamiltonian diffeomorphisms $\{\varphi_t\}$. Then, $B_1(F)=\left.v_1\circ Q^{\nabla}(F) \circ v_1^{-1}\right|_{y=0}=F+\nu\ldots$. We will now show that $v_1$ is generated by curvature element and deduce from it that $B_1=\exp(\nu D_G)$ for some $G\in C^{\infty}(M)[[\nu]]$. 

More generally, we study the holonomy of $\D$ and obtain hereafter an analogue to Ambrose--Singer Theorem \cite{AS} for the formal connection $\D$.

\begin{lemma}\label{lemme:AStheorem}
Let $t\in [0,1] \mapsto \nabla^t\in \E(M,\omega)$ a loop at $\nabla^0:=\nabla$ and consider it as the boundary of a disk
$$[0,1]^2 \rightarrow \E(M,\omega): (t,s)\mapsto \nabla^{ts},$$
with $\nabla^{t1}=\nabla^t$ and $\nabla^{t0}=\nabla^{0s}=\nabla^{1s}=\nabla$ for all $t$ and $s$.
Let $t\mapsto w_{ts}\in \Gamma \W^+$ be generated by $-\alpha_{\nabla^{ts}}(\ddt \nabla^{ts})$. Then, $s\mapsto w_{1s}$ is generated by the flat section 
\begin{equation}\label{eq:holonomie}
w_{1s}\circ\int_0^1 w_{ts}^{-1}\circ \Rr_{\nabla^{ts}}(\ddt\nabla^{ts},\dds \nabla^{ts})\circ w_{ts}\,dt \circ w_{1s}^{-1}.
\end{equation}
\end{lemma}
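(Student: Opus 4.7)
Fix $s$ and abbreviate $W(t,s):=w_{ts}$, together with the ``tangential'' and ``transverse'' generators
\[
A(t,s):=-\invnu\alpha_{\nabla^{ts}}(\partial_t\nabla^{ts}),\qquad B(t,s):=-\invnu\alpha_{\nabla^{ts}}(\partial_s\nabla^{ts}),
\]
so that, by Theorem~\ref{theor:smoothisom}, $\partial_t W=A\circ W$ with $W(0,s)=1$. The initial condition $w_{10}=1$ is automatic since $\nabla^{t0}\equiv\nabla$, so the claim of the lemma is equivalent, after conjugation by $w_{1s}$, to the ODE
\[
w_{1s}^{-1}\circ\partial_s w_{1s}\;=\;\invnu\int_0^1 w_{ts}^{-1}\circ\Rr_{\nabla^{ts}}(\partial_t\nabla^{ts},\partial_s\nabla^{ts})\circ w_{ts}\,dt.
\]

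The plan is to derive this ODE by an integration by parts in $t$. A direct computation using $\partial_t W=A\circ W$ and $\partial_t(W^{-1})=-W^{-1}\circ A$ gives the two identities
\[
\partial_t\bigl(W^{-1}\circ\partial_s W\bigr)=W^{-1}\circ(\partial_s A)\circ W,
\]
\[
\partial_t\bigl(W^{-1}\circ B\circ W\bigr)=W^{-1}\circ\bigl(\partial_t B-[A,B]\bigr)\circ W.
\]
Separately, pulling back the 1-form $\alpha$ along $(t,s)\mapsto\nabla^{ts}$ and using $[\partial_t,\partial_s]=0$, the defining formula $\Rr=d\alpha+\invnu[\alpha,\alpha]$ from Theorem~\ref{theor:Ralpha} yields the Maurer--Cartan-type relation
\[
\invnu\Rr_{\nabla^{ts}}(\partial_t\nabla^{ts},\partial_s\nabla^{ts})\;=\;\partial_s A-\partial_t B+[A,B].
\]
Combining the three identities gives $\partial_t(W^{-1}\circ\partial_s W)-\partial_t(W^{-1}\circ B\circ W)=\invnu W^{-1}\circ\Rr\circ W$.

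Integrating from $t=0$ to $t=1$, the disk boundary data $\nabla^{0s}=\nabla^{1s}=\nabla^{t0}=\nabla$ does the rest: $W(0,s)=1$ with $\partial_s W(0,s)=0$ kills the $t=0$ endpoint of the left-hand side, while $\partial_s\nabla^{0s}=\partial_s\nabla^{1s}=0$ forces $B(0,s)=B(1,s)=0$, so the integral of $\partial_t(W^{-1}\circ B\circ W)$ vanishes. What remains is exactly the ODE displayed above. Flatness of the integrand in \eqref{eq:holonomie} is then automatic: $\Rr_{\nabla^{ts}}(\cdot,\cdot)$ is $D^{\nabla^{ts}}$-flat by Theorem~\ref{theor:Ralpha}, and by Theorem~\ref{theor:smoothisom} conjugation by $w_{ts}$ intertwines $D^{\nabla^{ts}}$ with $D^{\nabla}$, so the integrand lies in $\Gamma\W_{D^{\nabla}}$; the outer conjugation by $w_{1s}$ lands back in $\Gamma\W_{D^{\nabla^{1s}}}=\Gamma\W_{D^{\nabla}}$.

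The main obstacle is spotting the integration-by-parts identity for $B$, i.e.\ recognizing that $\partial_t B-[A,B]$ becomes a $t$-exact derivative once conjugated by $W$. After that, the proof is essentially forced: the curvature identity is the only remaining input, and the four boundary conditions on the homotopy disk do the bookkeeping. The argument is the formal-geometric analogue of the classical Ambrose--Singer computation, adapted to the setting where parallel transport is realized by inner automorphisms of the Weyl-algebra bundle.
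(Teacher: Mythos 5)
Your proof is correct and follows essentially the same route as the paper: both compute $\partial_t\bigl(w_{ts}^{-1}\circ\partial_s w_{ts}\bigr)$, invoke the curvature formula $\Rr=d\alpha+\invnu[\alpha,\alpha]$ pulled back to the parameter square, and kill the extra terms by an integration by parts in $t$ using the boundary conditions $\partial_s\nabla^{0s}=\partial_s\nabla^{1s}=0$; your identity $\partial_t(W^{-1}\circ B\circ W)=W^{-1}\circ(\partial_t B-[A,B])\circ W$ is exactly the paper's integration-by-parts step in disguise. The flatness argument for the integrand is also the same as in the paper.
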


\begin{proof}
The candidate generator of $s\mapsto w_{1s}$ is a flat section. Indeed, from Theorem \ref{theor:smoothisom}, we know that conjugation with $w_{1s}$ preserves $D^{\nabla}$-flat sections. So, it remains to prove the integrand of \eqref{eq:holonomie} is $D^{\nabla}$-flat. Again using Theorem \ref{theor:smoothisom}, we have:
\begin{equation*}
D^{\nabla}(w_{ts}^{-1}\circ \Rr_{\nabla^{ts}}(\ddt\nabla^{ts},\dds \nabla^{ts})\circ w_{ts})=w_{ts}^{-1}\circ (D^{\nabla^{ts}} \Rr_{\nabla^{ts}}(\ddt\nabla^{ts},\dds \nabla^{ts}))\circ w_{ts}=0,
\end{equation*}
because the curvature $\Rr$ of $\D$ takes values in flat sections.

We now show expression \eqref{eq:holonomie} is equal to $\nu\,(\dds w_{1s})\circ w_{1s}^{-1}$. Actually, it is easier to treat $w_{1s}^{-1}\circ \dds w_{1s}$. The computation is rather standard and comes mainly from \cite{KM} and \cite{Magnot}.
\begin{eqnarray*}
w_{1s}^{-1}\circ \dds w_{1s} & = & \int_0^1 \ddt(w_{ts}^{-1}\circ \dds w_{ts}) dt \\
 & = & -\invnu \int_0^1 w_{ts}^{-1} \circ\dds \alpha_{\nabla^{ts}}(\ddt \nabla^{ts}) \circ w_{ts} dt.
\end{eqnarray*}
Using the formula \eqref{eq:Ralpha} for $\Rr_{\nabla^{ts}}$, we have
\begin{eqnarray*}
w_{1s}^{-1}\circ \dds w_{1s} & =  & -\invnu\int_0^1  w_{ts}^{-1} \circ\left(\ddt \alpha_{\nabla^{ts}}(\dds \nabla^{ts})+\invnu[ \alpha_{\nabla^{ts}}(\ddt \nabla^{ts}),\alpha_{\nabla^{ts}}(\dds \nabla^{ts}) ]\right) \circ w_{ts} \, dt \\
 &  & +  \invnu \int_0^1  w_{ts}^{-1} \circ \Rr_{\nabla^{ts}} (\ddt \nabla^{ts},\dds \nabla^{ts})  \circ w_{ts} \, dt.
\end{eqnarray*}
Now, the first line of the above equation vanishes by integration by parts:
\begin{eqnarray*}
\int_0^1  w_{ts}^{-1} \circ\left(\ddt \alpha_{\nabla^{ts}}(\dds \nabla^{ts})\right) \circ w_{ts} \, dt  & = &  \left[w_{ts}^{-1} \circ \alpha_{\nabla^{ts}}(\dds \nabla^{ts}) \circ w_{ts}\right]^{t=1}_{t=0} \\
 & & - \int_0^1  w_{ts}^{-1} \circ\invnu[ \alpha_{\nabla^{ts}}(\ddt \nabla^{ts}),\alpha_{\nabla^{ts}}(\dds \nabla^{ts}) ] \circ w_{ts} \, dt 
\end{eqnarray*}
with first term of the RHS vanishing because $\nabla^{0s}=\nabla^{1s}=\nabla$ for all $s$.
\end{proof}

\begin{theorem} \label{theor:holohamilt}
Let $H_t$ generating a loop $\{\varphi_t^{H_{\cdot}}\} \in \ham(M,\omega)$ and let $B_t$ be the Hamiltonian automorphisms from Proposition \ref{prop:Btparallel}. Then, 
$$B_1=\exp(\nu D_G),$$
with $G$ explicitely determined by parallel transport and the curvature of $\D$ on a disk inside $\E(M,\omega)$ with boundary $\{(\varphi_t^{H_{\cdot}})^{-1}\cdot \nabla\}$.
\end{theorem}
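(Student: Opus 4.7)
The plan is to combine Proposition~\ref{prop:Btparallel} with Lemma~\ref{lemme:AStheorem} applied to the loop of symplectic connections attached to the Hamiltonian loop, and then to recognise the resulting parallel transport as the $\circ$-exponential of a flat section.

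Since $\{\varphi_t^{H_{\cdot}}\}$ is a loop in $\ham(M,\omega)$, one has $\varphi_1^{H_{\cdot}}=\mathrm{Id}$ and the path $t\mapsto(\varphi_t^{H_{\cdot}})^{-1}\cdot\nabla$ is a loop based at $\nabla$ in the affine space $\E(M,\omega)$. I can therefore fill it by a smooth disk $(t,s)\mapsto\nabla^{ts}$ with $\nabla^{t1}=(\varphi_t^{H_{\cdot}})^{-1}\cdot\nabla$ and $\nabla^{t0}=\nabla^{0s}=\nabla^{1s}=\nabla$, for instance by affine interpolation. At $t=1$ the pull-back in Proposition~\ref{prop:Btparallel} becomes the identity, so $B_1(F)=v_1\circ Q^{\nabla}(F)\circ v_1^{-1}\big|_{y=0}$, where $v_1$ is the parallel transport of $\D$ around the loop. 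Applying Lemma~\ref{lemme:AStheorem} to this disk expresses $v_1=w_{1,1}$ as the endpoint of the $s$-family $s\mapsto w_{1,s}$, starting at $w_{1,0}=1$, generated by the flat section $f_s\in\Gamma\W_{D^{\nabla}}$ given by formula~\eqref{eq:holonomie}.

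Next, since $\Rr$ takes values in $\Gamma\W^3$ by Theorem~\ref{theor:Ralpha}, so does $f_s$, and its symbol lies in $\nu^2\,C^{\infty}(M)[[\nu]]$ (any $y^0$ term in $\Gamma\W^3$ has $\nu$-order at least $2$). Because $\Gamma\W_{D^{\nabla}}$ is a $\circ$-subalgebra of $\Gamma\W$ isomorphic via $\sigma$ and $Q^{\nabla}$ to $(C^{\infty}(M)[[\nu]],*_{\nabla})$, the Magnus expansion for the ODE $\tfrac{d}{ds}w_{1,s}=\nu^{-1}f_s\circ w_{1,s}$, $w_{1,0}=1$, stays inside flat sections and converges $\nu$-adically: each $n$-fold iterated $\circ$-commutator of $\nu^{-1}f_\sigma$ lies in $\Gamma\W^n$ and therefore contributes a symbol of $\nu$-order at least $\lceil n/2\rceil$. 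This yields a flat section $\Xi\in\Gamma\W_{D^{\nabla}}$ with $v_1=\exp_{\circ}(\Xi)$; setting $G:=\sigma(\Xi)\in\nu\,C^{\infty}(M)[[\nu]]$ gives $\Xi=Q^{\nabla}(G)$.

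Finally, since $Q^{\nabla}$ intertwines $\circ$ on flat sections with $*_{\nabla}$, the associative algebra identity $e^{X}\circ Y\circ e^{-X}=e^{\mathrm{ad}_X^{\circ}}Y$ applied to $X=Q^{\nabla}(G)$ and $Y=Q^{\nabla}(F)$ gives
\begin{equation*}
v_1\circ Q^{\nabla}(F)\circ v_1^{-1}=Q^{\nabla}\!\left(\exp\bigl([G,\,\cdot\,]_{*_{\nabla}}\bigr)(F)\right)=Q^{\nabla}\bigl(\exp(\nu D_G)(F)\bigr),
\end{equation*}
using $\nu D_G=[G,\cdot]_{*_{\nabla}}$. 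Evaluating at $y=0$ proves $B_1=\exp(\nu D_G)$, with $G$ constructed from the parallel transports $w_{t,s}$ and the curvature $\Rr$ of $\D$ over the filling disk. I expect the main technical hurdle to be the Magnus step: one must verify that the iterated $\circ$-commutators truncate at each total $\W$-degree and assemble into a well-defined flat section $\Xi$, so that the resulting $G\in\nu\,C^{\infty}(M)[[\nu]]$ is a genuine formal function.
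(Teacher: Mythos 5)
Your proposal is correct and follows essentially the same route as the paper: fill the loop $\{(\varphi_t^{H_\cdot})^{-1}\cdot\nabla\}$ with an affine disk, invoke Lemma \ref{lemme:AStheorem} to see that $s\mapsto v_{1s}$ is generated by a flat section of $\W$-degree at least $3$ built from the curvature $\Rr$, and then collapse the resulting one-parameter family into a single exponential. The only (cosmetic) difference is that you perform the collapsing step via the Magnus expansion directly in $\Gamma\W^+$, whereas the paper first pushes down to the induced path of Hamiltonian automorphisms $B_1^s$ generated by $G_s=g_s|_{y=0}\in\nu^2C^{\infty}(M)[[\nu]]$ and then assembles the single exponential by induction on the $\nu$-degree together with Campbell--Baker--Hausdorff; the degree count you give for the iterated commutators is exactly what makes either version converge.
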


\begin{proof}
As $\E(M,\omega)$ is an affine space we consider for example the disk
$$(t,s)\in [0,1]^2 \mapsto \nabla^{ts}:=\nabla + s((\varphi_t^{H_{\cdot}})^{-1}\cdot \nabla-\nabla),$$
and apply the preceding Lemma. To make notations coherent with the construction of $B_t$, we set $v_{ts} \in \Gamma \W^+$ generated by $-\alpha_{\nabla^{ts}}(\ddt \nabla^{ts})$, so that
$$B_1(F) = \left.(v_{11}\circ Q^{\nabla}(F)\circ v_{11}^{-1})\right|_{y=0}.$$
From the preceding Lemma, $s\mapsto v_{1s}$ is generated by the flat section
$$g_{s}:=v_{1s}\circ\int_0^1 v_{ts}^{-1}\circ \Rr_{\nabla^{ts}}(\ddt\nabla^{ts},\dds \nabla^{ts})\circ v_{ts}\,dt \circ v_{1s}^{-1}.$$
Then, the path of automorphism $s\mapsto B_1^s$ defined by $B_1^s(F) = \left.(v_{1s}\circ Q^{\nabla}(F)\circ v_{1s}^{-1})\right|_{y=0}$ is Hamiltonian and satisfies
\begin{equation*} \label{eq:Hamparallel}
\left\{\begin{array}{ccl}
\dds B_1^s(F) & = & \invnu[G_s,B_1^s(F)]_{*_{\nabla}},\\
B_1^0(F)& = & F.
\end{array}
\right.
\end{equation*}
for $G_s:= \left.g_s\right|_{y=0}\in \nu^2 C^{\infty}(M)[[\nu]]$.

Finally, $B_1$ can be made into a genuine exponential. Indeed, writing $G_s=\nu^2G_{s,2}+O(\nu^3)$, then $B_1^s=Id +  \invnu[\int_0^s G_{s,2}ds, F] + O(\nu^3)=\exp( \invnu D_{\int_0^s G_{s,2}ds})+O(\nu^3)$. Continuing this process by induction on the $\nu$-degree, we obtain $B_1^s$ as a product of exponentials and one is able to write $B_1=\exp(\nu D_{\widetilde{G}})$, as a single exponential, for some $\widetilde{G}$ depending on $G_s$ through the Campbell-Baker-Haussdorff formula.
\end{proof}

\begin{rem}
For $\Omega:=\nu \Omega_1+\nu^2\ldots$ with $\Omega_1\neq 0$, consider $*_{\nabla,\Omega}$ and $B_t$ the Hamiltonian automorphisms generate by $-H_t$ as in Proposition \ref{prop:Btparallel}. The exactness of the form $\int_0^1 (\varphi_t^{H_{\cdot}})^*\imath(X_{H_t})\Omega_1dt$ is an obstruction to write $B_1$ as an exponential, see \cite{LLF0}.
\end{rem}

\subsection{A formal action homomorphism} \label{sect:formalaction}

Shelukhin \cite{Shel} adapted the Weinstein action homomorphism \cite{Wein} in the context of an infinite dimensional symplectic manifold on which there is an action of a Lie group, possibly infinite dimensional, with moment map. This section is a first step in the generalisation of the work of Charles \cite{Ch} to our formal settings, see the Discussion Subsection for more details.

Let us describe the Weinstein action homomorphism for the particular symplectic space $(\E(M,\omega),\Omega^{\E})$ on which the group $\ham(M,\omega)$ with moment map $\mu$. Note that the moment map equation and the equivariance are simply the $(\nu=0)$-term of the corresponding Equations of Theorem \ref{theor:formalmomentmap}. Consider a loop $\{\varphi_t\}$ in $\ham(M,\omega)$ generated by $H_t\in C^{\infty}_0(M)$, and the corresponding loop $\{\varphi_t\cdot \nabla\}$ for $\nabla\in \E(M,\omega)$. Since $\E(M,\omega)$ is an affine space, one can consider a disk $B$ with boundary $\partial B= \{\varphi_t\cdot \nabla\}$. Shelukhin showed that the map 
$$\mathcal{A}_{\nabla}:\pi_1(\ham(M,\omega)) \rightarrow \R: \{\varphi_t\} \mapsto \A(\{\varphi_t\}):= \int_B \Omega^{\E} - \int_0^1 \int_M H_t\,\mu(\varphi_t\cdot\nabla)\dvol\, dt$$
is a well-defined homomorphism of groups on $\pi_1(\ham(M,\omega))$, independent of $\nabla$.

Deforming this construction with our formal moment map picture in mind, we have the following Theorem.

\begin{theorem} \label{theor:formalshelukhin}
With the above notation, there is a well-defined homomorphism
$$\widetilde{\A}_{\nabla}: \pi_1(\ham(M,\omega)) \rightarrow \R[[\nu]]: \{\varphi_t\} \rightarrow \widetilde{\A}_{\nabla}(\{\varphi_t\}):=  \int_B \widetilde{\Omega}^{\E} - \int_0^1 \left[\widetilde{\mu}(\varphi_t\cdot \nabla)\right](H_t)\dvol\, dt,$$
independent of $\nabla$
\end{theorem}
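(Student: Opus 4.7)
The plan is to adapt Shelukhin's classical construction \cite{Shel} to our formal setting; the key structural identities, namely the closedness of $\widetilde{\Omega}^{\E}$ (Lemma \ref{lemme:closedtildeomega}), the formal moment map equation \eqref{eq:formalmomentmap}, and the equivariance \eqref{eq:equivmomentmap}, hold at every order in $\nu$, so Shelukhin's argument can be carried out coefficient by coefficient and the result assembles into an $\R[[\nu]]$-valued homomorphism.

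First I would check that $\widetilde{\A}_{\nabla}(\{\varphi_{t}\})$ is unambiguous, given a loop $\{\varphi_t\}$. Since $M$ is closed and connected, the zero-mean Hamiltonian $H_t \in C^{\infty}_0(M)$ generating $\{\varphi_t\}$ is unique, so the second integral is intrinsic. For the first integral, given two fillings $B, B'$ with $\partial B = \partial B' = \{\varphi_t\cdot\nabla\}$, the $2$-sphere $B\cup(-B')$ sits inside the contractible affine space $\E(M,\omega)$ and hence bounds a $3$-chain; Stokes' theorem applied to the closed form $\widetilde{\Omega}^{\E}$ then gives $\int_B\widetilde{\Omega}^{\E}=\int_{B'}\widetilde{\Omega}^{\E}$.

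The core step is showing that $\widetilde{\A}_{\nabla}$ descends to $\pi_1(\ham(M,\omega))$. Given a smooth family of based loops $\gamma_s = \{\varphi_{t,s}\}_t$, let $H_{t,s}$ be the zero-mean Hamiltonian generating $t\mapsto \varphi_{t,s}$ and $K_{t,s}$ the zero-mean Hamiltonian generating $s\mapsto \varphi_{t,s}$. Since each $\gamma_s$ is a loop based at $Id$, we have $K_{0,s}=K_{1,s}=0$. Choosing a smooth family of filling disks $B_s$, I would differentiate $s\mapsto \widetilde{\A}_{\nabla}(\gamma_s)$ in $s$. By Stokes together with $d\widetilde{\Omega}^{\E}=0$, the derivative of $\int_{B_s}\widetilde{\Omega}^{\E}$ reduces to a boundary integral involving $\widetilde{\Omega}^{\E}_{\varphi_{t,s}\cdot\nabla}(\Lr_{X_{K_{t,s}}}(\varphi_{t,s}\cdot\nabla),\Lr_{X_{H_{t,s}}}(\varphi_{t,s}\cdot\nabla))$, which by the formal moment map equation \eqref{eq:formalmomentmap} can be rewritten purely in terms of $\widetilde{\mu}$. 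For the second term, I would expand $\frac{d}{ds}\int_0^1 [\widetilde{\mu}(\varphi_{t,s}\cdot\nabla)](H_{t,s})\,dt$ via \eqref{eq:formalmomentmap}, \eqref{eq:equivmomentmap}, the classical identity $\partial_s H_{t,s} - \partial_t K_{t,s} = \{K_{t,s}, H_{t,s}\}$, and integration by parts in $t$ using the boundary condition $K_{0,s}=K_{1,s}=0$. The two pieces then cancel and $\widetilde{\A}_{\nabla}$ is constant along the homotopy.

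The homomorphism property is then easy: concatenating loops corresponds to wedging disks at the basepoint and concatenating the zero-mean Hamiltonians, and additivity of the integrals yields $\widetilde{\A}_{\nabla}(\gamma_1\cdot\gamma_2)=\widetilde{\A}_{\nabla}(\gamma_1)+\widetilde{\A}_{\nabla}(\gamma_2)$. Independence of $\nabla$ is proved by yet another Stokes-plus-moment-map computation along a path $\nabla_s\in\E(M,\omega)$ connecting two basepoints, using that $\{\varphi_t\cdot\nabla_s\}$ remains a Hamiltonian loop for each $s$. The main obstacle I foresee is the bookkeeping in the homotopy-invariance step: the Poisson bracket produced by the identity $\partial_s H - \partial_t K = \{K,H\}$ must be traded, via equivariance of $\widetilde{\mu}$, against the boundary contribution coming from Stokes applied to $\int_{B_s}\widetilde{\Omega}^{\E}$, and one must verify that, at each order in $\nu$, these two contributions exactly match; the fact that $(\widetilde{\Omega}^{\E},\widetilde{\mu})$ satisfies the same algebraic identities as a genuine moment map makes this ultimately routine but notationally dense.
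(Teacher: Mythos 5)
Your proposal is correct and follows essentially the same route as the paper, which simply invokes Shelukhin's original argument and notes that the required Stokes and fundamental-theorem-of-calculus steps remain valid for $\R[[\nu]]$-valued forms because $\R[[\nu]]$ is Fr\'echet; your coefficient-by-coefficient reading of this is an equivalent way to make the same point. The ingredients you identify (closedness of $\widetilde{\Omega}^{\E}$, the formal moment map equation \eqref{eq:formalmomentmap}, equivariance \eqref{eq:equivmomentmap}, and contractibility of $\E(M,\omega)$) are exactly those the paper relies on.
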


\begin{proof}
The proof follows directly from Shelukhin's proof in \cite{Shel}. The only difference comes from the fact that we need to use the fundamental Theorem of differential calculus for smooth maps from an interval into $\R[[\nu]]$ and Stokes Theorem for integration on spheres of the $2$-form $\widetilde{\Omega}^{\E}$ which takes values in $\R[[\nu]]$ instead of $\R$. Since $\R[[\nu]]$ is Fr\'echet, these Theorems are valid as one can see from \cite{KHN}.
\end{proof}

\noindent From Lemma \ref{lemme:AStheorem}, we interpret the integral of $\widetilde{\Omega}^{\E}$ in terms of the holonomy of $\D$.  

\begin{prop} \label{prop:Atildeholonomie}
Let $\{\varphi_t^{H_{\cdot}}\}$ be a loop in $\ham(M,\omega)$ and consider a disk $B$ parametrized by $[0,1]^2\rightarrow \E(M,\omega):(t,s)\mapsto \nabla^{ts}$ with boundary $\nabla^{t1}=\varphi_t^{H_{\cdot}}\cdot \nabla$ and $\nabla^{t0}=\nabla^{0s}=\nabla^{1s}=\nabla$ for all $t$ and $s$.
Let $t\mapsto v_{ts}\in \Gamma \W^+$ be generated by $-\alpha_{\nabla^{ts}}(\ddt \nabla^{ts})$. Then, 
\begin{equation}\label{eq:Atildeholonomie}
\widetilde{\A}_{\nabla}(\{\varphi_t\})=(2\pi)^m.24.\nu^{m-2}\left(\int_0^1 \tr^{*_{\nabla}}(\nu(\dds v_{1s})\circ v_{1s}^{-1})ds + \int_0^1 \tr^{*_{\nabla}}((\varphi_t^{H_{\cdot}})^*H_t) dt\right).
\end{equation}
\end{prop}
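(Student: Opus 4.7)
The strategy is to expand both terms in the definition of $\widetilde{\A}_{\nabla}(\{\varphi_t\})$, transport all traces back to $\tr^{*_{\nabla}}$ using the naturality of Fedosov's construction, and then recognize the integrand of the $\int_B\widetilde\Omega^\E$ term via Lemma \ref{lemme:AStheorem}.

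First I would treat the Hamiltonian term. Using the equivariance identity \eqref{eq:equivmomentmap} of Theorem \ref{theor:formalmomentmap},
$$\widetilde\mu(\varphi_t\cdot\nabla)(H_t)=\widetilde\mu(\nabla)(\varphi_t^*H_t)=-(2\pi)^m\cdot 24\cdot\nu^{m-2}\,\tr^{*_{\nabla}}(\varphi_t^*H_t),$$
so $-\int_0^1\widetilde\mu(\varphi_t\cdot\nabla)(H_t)\,dt$ immediately produces the second summand on the right-hand side of \eqref{eq:Atildeholonomie}.

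Next I would handle the symplectic integral. Unfolding the definition of $\widetilde\Omega^\E$ gives
$$\int_B \widetilde\Omega^\E=(2\pi)^m\cdot 24\cdot\nu^{m-2}\int_0^1\!\!\int_0^1 \tr^{*_{\nabla^{ts}}}\!\bigl(\left.\Rr_{\nabla^{ts}}(\partial_t\nabla^{ts},\partial_s\nabla^{ts})\right|_{y=0}\bigr)\,dt\,ds.$$
The key step is to pull every trace back to $\tr^{*_{\nabla}}$. For fixed $(t,s)$, conjugation by $v_{ts}$ is an isomorphism $(\Gamma\W_{D^{\nabla}},\circ)\to(\Gamma\W_{D^{\nabla^{ts}}},\circ)$ (Theorem \ref{theor:smoothisom}); via the symbol map this descends to a star-product isomorphism $\psi_{ts}:(C^\infty(M)[[\nu]],*_{\nabla})\to(C^\infty(M)[[\nu]],*_{\nabla^{ts}})$. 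By uniqueness of the normalised trace, $\tr^{*_{\nabla^{ts}}}\circ\psi_{ts}=\tr^{*_{\nabla}}$. Since $\Rr_{\nabla^{ts}}(\partial_t\nabla^{ts},\partial_s\nabla^{ts})$ is $D^{\nabla^{ts}}$-flat (Theorem \ref{theor:Ralpha}), we may write it as $Q^{\nabla^{ts}}$ of its symbol, and the trace identity becomes
$$\tr^{*_{\nabla^{ts}}}\!\bigl(\left.\Rr_{\nabla^{ts}}(\partial_t\nabla^{ts},\partial_s\nabla^{ts})\right|_{y=0}\bigr)=\tr^{*_{\nabla}}\!\bigl(\left.v_{ts}^{-1}\circ\Rr_{\nabla^{ts}}(\partial_t\nabla^{ts},\partial_s\nabla^{ts})\circ v_{ts}\right|_{y=0}\bigr).$$
Swapping the $t$-integral with the trace (both are $\R[[\nu]]$-linear on smooth paths of flat sections) yields
$$\int_B\widetilde\Omega^\E=(2\pi)^m\cdot 24\cdot\nu^{m-2}\int_0^1 \tr^{*_{\nabla}}\!\bigl(\left.X_s\right|_{y=0}\bigr)\,ds,\quad X_s:=\int_0^1 v_{ts}^{-1}\circ\Rr_{\nabla^{ts}}(\partial_t\nabla^{ts},\partial_s\nabla^{ts})\circ v_{ts}\,dt.$$

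Finally I invoke Lemma \ref{lemme:AStheorem}, which says precisely that $\nu(\dds v_{1s})\circ v_{1s}^{-1}=v_{1s}\circ X_s\circ v_{1s}^{-1}$. Since $\nabla^{1s}=\nabla$ along the whole segment $s\in[0,1]$, conjugation by $v_{1s}$ is an automorphism of $(C^\infty(M)[[\nu]],*_{\nabla})$; again by uniqueness of the normalised trace this automorphism preserves $\tr^{*_{\nabla}}$, so
$$\tr^{*_{\nabla}}\!\bigl(\left.X_s\right|_{y=0}\bigr)=\tr^{*_{\nabla}}\!\bigl(\left.v_{1s}\circ X_s\circ v_{1s}^{-1}\right|_{y=0}\bigr)=\tr^{*_{\nabla}}\!\bigl(\nu(\dds v_{1s})\circ v_{1s}^{-1}\bigr),$$
reproducing the first summand of \eqref{eq:Atildeholonomie}. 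The main technical obstacle is justifying that the conjugation by $v_{ts}\in\Gamma\W^+$ (where negative powers of $\nu$ can appear) actually gives an $\R[[\nu]]$-linear, trace-preserving isomorphism of the star product algebras; this is where one uses that $Q^{\nabla^{ts}}(F)=v_{ts}\circ Q^{\nabla}(\psi_{ts}^{-1}F)\circ v_{ts}^{-1}$ lies in $\Gamma\W$ (not just $\Gamma\W^+$) by construction, and then applies the uniqueness theorem for normalised traces of Fedosov star products.
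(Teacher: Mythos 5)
Your proposal is correct and follows essentially the same route as the paper: both arguments rest on Lemma \ref{lemme:AStheorem}, the invariance of the normalised trace under conjugation by $v_{1s}$ (an automorphism of $*_{\nabla}$), the trace identity for the isomorphism induced by conjugation with $v_{ts}$, and equivariance/naturality for the Hamiltonian term. The only cosmetic difference is that you expand $\int_B\widetilde{\Omega}^{\E}$ and work towards the holonomy expression, while the paper starts from $\tr^{*_{\nabla}}(\nu(\dds v_{1s})\circ v_{1s}^{-1})$ and recovers the integral of $\widetilde{\Omega}^{\E}$.
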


\begin{proof}
By Equation \eqref{eq:holonomie}, we have 
$$\tr^{*_{\nabla}}(\nu\left.(\dds v_{1s})\circ v_{1s}^{-1}\right|_{y=0})=\tr^{*_{\nabla}}(\left.v_{1s}\circ\int_0^1 v_{ts}^{-1}\circ R^{\nabla^{ts}}(\ddt\nabla^{ts},\dds \nabla^{ts})\circ v_{ts}\,dt \circ v_{1s}^{-1}\right|_{y=0}).$$
Because we used normalised trace and conjugation with $v_{1s}$ induces an automorphism of $*_{\nabla}$, we have
$$\tr^{*_{\nabla}}(\nu\left.(\dds v_{1s})\circ v_{1s}^{-1}\right|_{y=0})= \tr^{*_{\nabla}}(\left.\int_0^1 v_{ts}^{-1}\circ R^{\nabla^{ts}}(\ddt\nabla^{ts},\dds \nabla^{ts})\circ v_{ts}\right|_{y=0}\,dt )$$
The trace being linear, it goes under the integral symbol, and since conjugation with $v_{ts}$ induces an isomorphism between $*_{\nabla}$ and $*_{\nabla^{ts}}$ we conclude
\begin{eqnarray*}
\tr^{*_{\nabla}}(\left.\nu(\dds v_{1s})\circ v_{1s}^{-1}\right|_{y=0}) & = & \int_0^1 \tr^{*_{\nabla^{ts}}}(\left. R^{\nabla^{ts}}(\ddt\nabla^{ts},\dds \nabla^{ts})\right|_{y=0})\,dt, \\
 & = & \frac{1}{(2\pi)^m.24.\nu^{m-2}}  \int_0^1\widetilde{\Omega}^{\E}_{\nabla^{ts}}(\ddt\nabla^{ts},\dds \nabla^{ts})\, dt.
\end{eqnarray*}
So that
$$(2\pi)^m.24.\nu^{m-2}\int_0^1 \tr^{*_{\nabla}}(\nu(\dds v_{1s})\circ v_{1s}^{-1})ds= \int_B\widetilde{\Omega}^{\E}.$$
The second term of $\widetilde{\A}_{\nabla}$ is obtain directly from the definition of $\tilde{\mu}$ and the naturality of the trace functional.
\end{proof}

\begin{rem}
Comparing the expression \eqref{eq:Atildeholonomie} with Theorem \ref{theor:holohamilt}, we see that $-(\varphi^{H_{\cdot}}_t)^*H_t$ is the generator of the Hamiltonian automorphisms path obtained by parallel transport along $\{\varphi_t^{H_{\cdot}}\cdot\nabla\}$. Also, $\nu(\dds v_{1s})\circ v_{1s}^{-1}$ is the generator of a path of Hamiltonian automorphism of the form $Id+\nu\ldots$ with same endpoint as the previous one. Concatenating the two paths, we get a loop of Hamiltonian automorphisms and the above formula states that $\widetilde{\A}$ is simply the integral of the trace of the generator of that loop of Hamiltonian automorphisms.\\
In \cite{Ch}, within the framework of Berezin-Toeplitz quantization, the parallel transport maps are realized as unitary transformations. The corresponding formula obtained by Charles is a lift of the actual determinant map.
\end{rem}

Inspired by Shelukhin \cite{Shel0,Shel}, we relate the formal action functional $\widetilde{\A}$ to the invariant from \cite{FutLLF2} obstructing the closedness of $*_{\nabla}$, on K\"ahler manifolds. On $(M,\omega,J)$ a K\"ahler manifolds, let us consider loops $\{\varphi_t^{H_{\cdot}}\}$ in $K:=\ham(M,\omega)\cap \textrm{Hol}(M,J)$ the group of Hamiltonian isometries. It means each $\varphi_t$ preserves the complex structure. Consider the Levi-Civita connection $\nabla$ of the K\"ahler manifold, the corresponding loop of symplectic connections is trivial $\{\varphi_t^{H_{\cdot}}\cdot\nabla\}=\{\nabla\}$. To compute $\widetilde{\A}_{\nabla}(\{\varphi_t^{H_{\cdot}}\})$, we can take the trivial disk consisting of the single point $\{\nabla\}$, such that there is no contributions of $\widetilde{\Omega}^{\E}$:
$$\widetilde{\A}_{\nabla}(\{\varphi_t^{H_{\cdot}}\})=(2\pi)^m.24.\nu^{m-2}\int_0^1 \tr^{*_{\nabla}}(H_t)dt.$$
Writing $\tau_{\cdot}$ for the moment map of the action of $K$ on $M$ normalised to have zero integral, in \cite{FutLLF2}, Futaki and the author showed that the map 
$$\F:Lie(K) \rightarrow \R[[\nu]]:X \mapsto \tr^{*_{\nabla}}(\tau_X)$$
is a Lie algebra character, that is independent of the choice of a $K$-invariant symplectic connection $\nabla$, that obstructs the closedness of $*_{\nabla}$. In \cite{LLF2}, we observed that the first order term of $\F$ computed on the Ono--Sano--Yotsutani \cite{Ono} example is non-trivial. Using the isomorphism $$\pi_1(K)\otimes \R\cong Lie(K)/[Lie(K),Lie(K)],$$ we obtain:

\begin{cor} \label{cor:futaki}
The invariant $\left.\widetilde{\A}_{\nabla}\right|_{\pi_1(K)}$ and $\F$ coincides up to a constant factor in $\R[[\nu]]$. So, $\widetilde{\A}_{\nabla}$ itself is not trivial in general.
\end{cor}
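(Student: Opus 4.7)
The plan is to compute $\widetilde{\A}_{\nabla}$ explicitly on the generators of $\pi_1(K)$ coming from the center of $K$, and to identify the answer with $\F$ up to the universal prefactor $(2\pi)^m\cdot 24\cdot \nu^{m-2}$. The starting point is the formula displayed immediately above the statement: since every $\varphi\in K$ is a K\"ahler isometry it fixes the Levi--Civita connection $\nabla$, the loop $\{\varphi_t^{H_{\cdot}}\cdot\nabla\}$ is the constant loop at $\nabla$, so the constant disk $\{\nabla\}$ is an admissible filling and only the second integral in the definition of $\widetilde{\A}_{\nabla}$ contributes:
\begin{equation*}
\widetilde{\A}_{\nabla}(\{\varphi_t^{H_{\cdot}}\})\;=\;(2\pi)^m\cdot 24\cdot \nu^{m-2}\int_0^1 \tr^{*_{\nabla}}(H_t)\,dt.
\end{equation*}

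Next I would reduce to one-parameter subgroup representatives. Because $K$ is a compact connected Lie group, the identity component $Z(K)^0$ of its center is a torus and the inclusion induces an isomorphism $\pi_1(Z(K)^0)\otimes\R\cong \pi_1(K)\otimes\R$; on the Lie algebra side the decomposition $Lie(K)=Lie(Z(K)^0)\oplus[Lie(K),Lie(K)]$ identifies $Lie(Z(K)^0)$ with $Lie(K)/[Lie(K),Lie(K)]$. Combining these two identifications one recovers the isomorphism cited just before the corollary. Hence every class $[\gamma]\in\pi_1(K)$ is represented, up to a rational scalar, by a one-parameter loop $\gamma_t=\exp(tX)$ with $X\in Lie(Z(K)^0)$ satisfying $\exp(X)=e$. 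The Hamiltonian generator of such a loop is the time-independent normalised moment map value $H_t=\tau_X$, and substitution into the formula above gives
\begin{equation*}
\widetilde{\A}_{\nabla}([\gamma])\;=\;(2\pi)^m\cdot 24\cdot\nu^{m-2}\,\tr^{*_{\nabla}}(\tau_X)\;=\;(2\pi)^m\cdot 24\cdot\nu^{m-2}\,\F(X).
\end{equation*}

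Both sides are $\R$-linear: $\widetilde{\A}_{\nabla}$ is a homomorphism out of the abelian group $\pi_1(K)\otimes\R$ by Theorem \ref{theor:formalshelukhin}, and $\F$ is a Lie algebra character, hence factors through $Lie(K)/[Lie(K),Lie(K)]$. The equality on one-parameter loops therefore extends by linearity to all of $\pi_1(K)\otimes\R$, giving $\left.\widetilde{\A}_{\nabla}\right|_{\pi_1(K)}=(2\pi)^m\cdot 24\cdot\nu^{m-2}\,\F$ under the stated isomorphism. The non-triviality claim follows immediately: by \cite{LLF2} the first-order-in-$\nu$ term of $\F$ is non-zero on the Ono--Sano--Yotsutani example, and the prefactor $(2\pi)^m\cdot 24\cdot\nu^{m-2}$ is a nonzero element of $\R[\nu^{-1},\nu]]$, so $\widetilde{\A}_{\nabla}$ cannot vanish on $\pi_1(K)$ in general.

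The main delicate point will be checking that the Lie-theoretic isomorphism really intertwines with the Hamiltonian picture at the level of one-parameter subgroups --- specifically, that with the sign and normalisation conventions fixed earlier in the paper ($\imath(X_F)\omega=dF$ and the zero-mean normalisation of the moment map), the loop $\exp(tX)$ is indeed generated by $\tau_X$ and no stray scalar arises. Once this bookkeeping is secured the rest of the argument reduces to a direct substitution and an abstract extension by linearity.
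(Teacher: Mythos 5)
Your proposal is correct and follows essentially the same route as the paper: the paper likewise uses the constant disk at the Levi--Civita connection to reduce $\widetilde{\A}_{\nabla}$ on loops in $K$ to $(2\pi)^m\cdot 24\cdot\nu^{m-2}\int_0^1 \tr^{*_{\nabla}}(H_t)\,dt$ and then invokes the isomorphism $\pi_1(K)\otimes\R\cong Lie(K)/[Lie(K),Lie(K)]$ together with the fact that $\F$ is a Lie algebra character. You merely make explicit the Lie-theoretic bookkeeping (reduction to central one-parameter loops and extension by linearity) that the paper leaves implicit, which is a faithful elaboration rather than a different argument.
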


\subsection{Discussion}

\begin{enumerate}
\item The formal moment map Theorem \ref{theor:formalmomentmap}, is a formal counter-part of the results of Foth--Uribe \cite{FU}. Using Berezin-Toeplitz operators, they obtain after asymptotical development a deformation of the natural symplectic form on the space $\J$ of almost complex structure of an integral symplectic manifold and a deformation of the scalar curvature moment map picture through the trace of the Berezin-Toeplitz operators. The star product behind Berezin-Toeplitz operators \cite{Schlich} is not equivalent, in general, to $*_{\nabla}$. Also, note that our construction does not required integrality of the symplectic form.

Charles \cite{Ch2} used the same framework as in \cite{FU} but considering metaplectic correction, which only holds under some topological restrictions. As in \cite{FU}, a bundle of quantum states is considered on $\J$ with canonical connection. Its curvature is computed and is shown to be a Toeplitz operator. The star product behind this construction is, due to the metaplectic correction, equivalent to $*_{\nabla}$. Pursuing this construction in the direction of Foth--Uribe's work one should get a similar moment map picture as we have obtained here.

In both paper \cite{FU} and \cite{Ch2}, Hamiltonian diffeomorphisms of the symplectic manifolds are lifted as endomorphisms of the quantum space bundle using the parallel transport. In both cases these lifts induces Hamiltonian automorphisms of the underlying star product, as in Proposition \ref{prop:Btparallel}

\item Shelukhin \cite{Shel} uses the scalar curvature moment map picture to define a quasi-morphism on the universal cover $\widetilde{\ham}(M,\omega)$ extending the corresponding Weinstein action homomorphism $\A^{\textrm{scal}}$. Its construction exploits an hyperbolicity property of the natural K\"ahler structure on $\J$. It is one of the ingredient of Charles' work \cite{Ch} studying asymptotic representations of Hamiltonian diffeomorphisms in which he obtained more general computations as Lemma \ref{lemme:AStheorem} and Proposition \ref{prop:Atildeholonomie} for path of Hamiltonian diffeomorphisms.

In our situation, the geometry of $(\E(M,\omega),\Omega^{\E})$ is, at first sight, flat. So that the construction of Shelukhin does not apply directly. But, it must be interesting to study if it is possible to extend the (formal) action homomorphism described in Subsection \ref{sect:formalaction} as (formal) quasi-morphism (with appropriate definition of formal quasi-morphism of course).

\item In \cite{Shel}, the Weinstein action homomorphism  $\A^{\textrm{scal}}$ as above, is shown to be equal to another invariant of symplectic geometry $I_{c_1}$ coming from the study of Hamiltonian fibrations attached to Hamiltonian loops \cite{LMP}. This identification implies the equality of $\A^{\textrm{scal}}$ with the original Futaki invariant \cite{Futor} which inspired the author for Corollary \ref{cor:futaki}. In \cite{Krav}, Kravchenko studied the Fedosov deformation quantization of Hamiltonian fibration. Can we derive from Kravchenko's work an invariant similar to $I_{c_1}$ that is equal to $\widetilde{\A}$?

\item In \cite{LLF}, we observe that for Fedosov star products $*_{\nabla, \nu f^*\chi}$, for $\chi$ being a fixed symplectic form and $f$ being a diffeomorphism of $M$, the trace is related at first order in $\nu$ to a moment map picture on the connected component of the space of diffeomorphisms $\textrm{Diff}_0(M)$. It should be interesting to consider a star product algebra bundle over $\textrm{Diff}_0(M)$ and to deform the moment map geometry of $\textrm{Diff}_0(M)$ in a similar vein as here.

We also observe that the scalar curvature moment map picture appears in the trace of the Bordemann--Waldmann star product \cite{BW}, which is very close to the star product underlying the work of Foth--Uribe discussed in 1. Surely, the work of this paper could be adapted to the Bordemann--Waldmann star product to deform the scalar curvature moment map picture and obtain directly the asymptotic results from \cite{FU}.

\end{enumerate}

\subsection*{Acknowledgements}

First, I would like to thank Simone Gutt for her encouragement in my research and for supporting my scientific career. I thank Akito Futaki for his interest in my work and many fruitful discussions. I thank Martin Schlichenmaier who give me the chance to do this work in the excellent research environment of the University of Luxembourg. I also thank Michel Cahen for his longstanding support.


\end{document}